\definecolor{darkred}{rgb}{0.5,0.15,0.15}
\definecolor{shadecolor}{rgb}{0.85,0.85,0.85}
\newtheorem{thm}{Theorem}
\newtheorem{lem}[thm]{Lemma}
\newtheorem{prop}[thm]{Proposition}
\theoremstyle{remark}
\newtheorem{rem}[thm]{Remark}
\theoremstyle{definition}
\newtheorem{defn}[thm]{Definition}
\numberwithin{thm}{section}
\numberwithin{equation}{section}
\numberwithin{figure}{section}
\newcommand{\fa}{{\mathfrak a}}
\newcommand{\cN}{\ensuremath{\mathcal N}}
\newcommand{\cC}{\ensuremath{\mathcal C}}
\newcommand{\cB}{\ensuremath{\mathcal B}}
\newcommand{\cL}{\ensuremath{\mathcal L}}
\newcommand{\cM}{\ensuremath{\mathcal M}}
\newcommand{\cO}{\ensuremath{\mathcal O}}
\newcommand{\cW}{\ensuremath{\mathcal W}}
\newcommand{\cP}{\ensuremath{\mathcal P}}
\newcommand{\R}{\ensuremath{\mathbb R}}
\newcommand{\C}{\ensuremath{\mathbb C}}
\newcommand{\CP}{\ensuremath{\mathbb {CP}}}
\newcommand{\PP}{\ensuremath{\mathbb P}}
\newcommand{\Z}{\ensuremath{\mathbb Z}}
\newcommand{\bfb}{\ensuremath{\mathbf b}}
\newcommand{\bfalpha}{\ensuremath{\bm \alpha}}
\newcommand{\half}{\ensuremath{\frac{1}{2}}}
\newcommand{\cE}{{\mathcal E}}
\newcommand{\hk}{hyperk\"ahler\xspace}
\newcommand{\zbar}{\ensuremath{\overline{z}}}
\newcommand{\Id}{\ensuremath{\mathrm{Id}}}
\newcommand{\eff}{\ensuremath{\mathrm{eff}}}
\newcommand{\tU}{{\widetilde{U}}}
\newcommand{\I}{{\mathrm i}}
\newcommand{\e}{{\mathrm e}}
\newcommand{\de}{\mathrm{d}}
\newcommand{\rmbig}{\mathrm{big}}
\newcommand{\rmsmall}{\mathrm{small}}
\newcommand{\abs}[1]{\lvert#1\rvert}
\newcommand{\norm}[1]{\lVert#1\rVert}
\newcommand{\eps}{\epsilon}
\newcommand{\ti}[1]{\textit{#1}}
\newcommand{\phimodel}{\varphi_{\mathrm{model}}}
\DeclareMathOperator{\Tr}{Tr}
\DeclareMathOperator{\End}{End}
\DeclareMathOperator{\Hom}{Hom}
\DeclareMathOperator{\diag}{diag}
\DeclareMathOperator{\pdeg}{pdeg}
\DeclareMathOperator{\charp}{char}
\begin{document}
\onehalfspacing

\title{From $S^1$-fixed points to $\mathcal{W}$-algebra
representations}
\author{Laura Fredrickson and Andrew Neitzke}

\date{\today}

\begin{abstract}
We study a set $\cM_{K,N}$ parameterizing filtered $SL(K)$-Higgs bundles over $\C\PP^1$ with an 
irregular singularity at $z = \infty$, such that the eigenvalues of the Higgs field
grow like $\abs{\lambda} \sim \abs{{z}^{N/K} \de z}$, where $K$ and $N$ are coprime.
$\cM_{K,N}$ carries a $\C^\times$-action analogous to
the famous $\C^\times$-action introduced by Hitchin on the moduli spaces of Higgs bundles over
compact curves. The construction of this $\C^\times$-action on $\cM_{K,N}$ involves the rotation
automorphism of the base $\C\PP^1$. We classify the fixed points of this $\C^\times$-action,
and exhibit a curious $1$-$1$ correspondence between these fixed points
and certain representations of the vertex algebra $\cW_K$; in particular
we have the relation
$\mu = \frac{1}{12} \left(K - 1 - c_\eff \right)$,
where $\mu$ is a regulated version of the $L^2$ norm of the Higgs field,
and $c_\eff$ is the effective Virasoro central charge of the corresponding
$W$-algebra representation. We also discuss a Bialynicki-Birula-type stratification
of $\cM_{K,N}$, where the strata are labeled by
isomorphism classes of the underlying filtered vector bundles.
\end{abstract}

\maketitle

\setcounter{page}{1}

\bigskip

\tableofcontents

\section{Introduction}\label{sec:introS1}

\subsection{Higgs bundles and singularities}

Recall that a \ti{$SL(K)$-Higgs bundle} over a complex
curve $C$ is a pair $(\cE, \varphi)$, where $\cE$ is a
holomorphic rank $K$ vector bundle with fixed determinant,
and $\varphi$ is a traceless holomorphic section of $\End \cE \otimes K_C$.
Given a compact Riemann surface $C$, there is a moduli space $\cM_K(C)$
parameterizing $SL(K)$-Higgs bundles over $C$ up to equivalence,
first introduced in \cite{hitchin87} when $K=2$.

The $\cM_K(C)$ are geometrically tremendously rich spaces,
but also rather complicated to study explicitly. Part of the reason
for the complication is that to get nontrivial examples one
needs to take $C$ of genus $g \ge 2$, and many of the geometric
phenomena in $\cM_K(C)$ really depend on the moduli of $C$.
If your interest is in Teichm\"uller theory or its higher analogues,
then the fact that $\cM_K(C)$ has a lot to do with $C$ is
a good thing. But if you are interested in other aspects of
$\cM_K(C)$ --- say its \hk structure, its Hodge theory,
its relation to cluster
algebras --- you might want some basic examples where the problems
of $C$ can be set aside.

One way to avoid these difficulties is to consider Higgs bundles
with regular singularities, i.e. allow $\varphi$ to have simple
poles at points of $C$, as in \cite{simpsonnoncompact, yokogawa93, konno93}.
In this case one can get
a nontrivial moduli space even for a genus $0$ curve, e.g. with
$K = 2$ and $4$ regular singularities.
If one is willing to go to $K>2$, then
one can get interesting examples even on a genus $0$ curve with
$3$ regular singularities;
this gets rid of the moduli of $C$ completely since all $3$-tuples
of points are equivalent (but still
leaves the residual awkwardness of choosing a representative.)

There is also another possibility, which is the focus
of this paper: one can study moduli spaces of Higgs bundles with
irregular (aka wild) singularities.
It has been known for some time that many features of
$\cM_K(C)$ have analogues for these kinds of moduli spaces;
in particular, \cite{biquardboalch} shows that one can get \hk moduli
spaces in this way.
Moreover, once we go to the wild setting, there is no obstacle to taking
$C$ to be a \ti{once}-punctured sphere.
This gives rise to a class of examples which
have some right to be called the simplest moduli spaces of 
nonabelian Higgs bundles.

\subsection{The $\cM_{K,N}$}

This paper concerns a family of sets $\cM_{K,N}$,
parameterizing $SL(K)$-Higgs bundles with parabolic
degree zero\footnote{See Definition \ref{def:pdeg}
below for the notion of parabolic degree. We could similarly consider
nonzero parabolic degrees $d$, parameterized by a set
$\cM_{K,N}^d$; but for any $d \in \R$, the map
$(\cE, \varphi) \to (\cE \otimes \cO(\frac{d}{K}), \varphi)$
gives an isomorphism $\cM_{K,N} \simeq \cM_{K,N}^d$.
The harmonic metrics, which we introduce later, 
are similarly modified by the simple overall change
$h \to h \cdot  (1+\abs{z}^2)^{-\frac{d}{K}}$.
Thus, without loss of
generality we may as well just consider $d=0$,
and in the body of the paper we build this into our
definition of $SL(K)$-bundle.}
on $C = \C\PP^1$, with a specific sort of irregular singularity
at $z = \infty$. The singularity condition is specified by an
integer $N > 0$, coprime to $K$;
roughly, it says that the eigenvalues of the Higgs field $\varphi$
behave as
\begin{equation} \label{eq:eigenvalue-condition}
	 \lambda_r \sim \e^{2 \pi \I r/K} z^{N/K} \de z, \qquad \abs{z} \to \infty.
\end{equation}
The precise condition on the Higgs bundles we consider
is given in \S\ref{sec:CKN} below.
It is formulated in the language of \ti{good filtered Higgs bundles}
in the sense of \cite{MochizukiToda}, which we review
in \S\ref{sec:goodfiltered}.

Our assumption that $K$ and $N$ are coprime is used many times
to simplify arguments in this paper. In particular, it implies
that we do not have to worry about stability:
the good filtered Higgs bundles parameterized by $\cM_{K,N}$ are
all automatically stable (Remark \ref{rem:irreducible} below.)

\subsection{Geometric expectations}

It is generally expected
that $\cM_{K,N}$ has most of the same geometric structures
as the usual spaces $\cM_K(C)$.
In this paper, though, we do not develop these structures
in detail. We
construct and study $\cM_{K,N}$ as a \ti{set};
we do not give a careful proof
that it is actually a coarse moduli space. 
Needless to say, we also do not
prove that $\cM_{K,N}$ admits a canonical
\hk structure on its smooth locus,
though this is also widely expected to be true, and should follow
from a small extension of the results of \cite{biquardboalch}.
The nonabelian Hodge correspondence in \cite{biquardboalch} does extend, 
so points of $\cM_{K,N}$ may be thought
of as good filtered bundles, harmonic bundles, or certain bundles equipped with flat connections.
The corresponding wild character varieties are conjecturally those in \cite{BY15}.

$\cM_{K,N}$ maps surjectively to a Hitchin base $\cB_{K,N}$, which is
a linear space of complex dimension $\half (K-1)(N-1)$. We expect
that this map behaves like the usual Hitchin fibration on $\cM_K(C)$,
e.g. that the generic fibers are compact complex tori and all fibers
are compact complex Lagrangian. More specifically the fibers should be
compactified Jacobians\footnotemark\ for the family of spectral curves
parameterized by $\cB_{K,N}$, described in
Proposition \ref{prop:hitchinbase} below. For example, when $K=2$,
this family consists of all curves of the form
\begin{equation}
	y^2 = z^N + P_2(z), \qquad \deg P_2 \le \frac{N}{2}-1.
\end{equation}
\footnotetext{For $SL(K)$-Higgs bundles over a compact curve $C$,
a generic fiber of the Hitchin integrable system
is the Prym subvariety of the Jacobian variety of a spectral curve.
The codimension of the Prym subvariety is equal to the dimension
of $\mathrm{Jac}(C)$. In our situation, the Prym subvariety
coincides with the Jacobian variety, since $\mathrm{Jac}(\CP^1)$
is a point --- fixing the degree $\deg(\cE)=\deg(\wedge^K \cE)=d$
automatically fixes the holomorphic structure
of the determinant line bundle $\wedge^K \cE=\cO(d)$.}

\subsection{The stratification}

The Higgs bundles we consider are in particular
\ti{filtered} bundles over $(\CP^1, \{ \infty \})$,
and decompose as direct sums of filtered line bundles:
\begin{equation} \label{eq:decomp-intro}
	\cE \simeq \bigoplus_{i=1}^K \cO(\alpha_i).
\end{equation}
Here the $\alpha_i \in \R$ are the parabolic degrees
of the summands. The existence of the decomposition
\eqref{eq:decomp-intro} comes from a mild generalization of the usual
Grothendieck lemma for ordinary holomorphic vector
bundles over $\C\PP^1$, Lemma \ref{lem:parabolic-grothendieck} below. 
(Ordinary holomorphic vector bundles over $\CP^1$ can be thought of
as special cases of filtered bundles over
$(\CP^1, \{ \infty \})$, for which the $\alpha_i \in \Z$.)

$\cM_{K,N}$ is stratified by the types occurring in the
decomposition  \eqref{eq:decomp-intro}:
the $\alpha_i$ mod $1$
are fixed (and all distinct) while the integer parts
can change as one moves around $\cM_{K,N}$.
The strata can be conveniently labeled by
cyclic $K$-partitions of $N$, i.e. tuples
$\bfb = (b_1, \dots, b_K)$ with $b_i \in \Z_{\ge 0}$, $\sum b_i = N$,
up to cyclic permutation:
\begin{equation} \label{eq:strat-intro}
	\cM_{K,N} = \bigsqcup_{[\bfb]} \cM_{K,N}^{[\bfb]}.
\end{equation}
The numbers $b_i - \frac{N}{K}$
are the successive
differences $\alpha_i - \alpha_{i+1}$.

\subsection{The $\C^\times$-action}

The stratification \eqref{eq:strat-intro} is
a Bialynicki-Birula-type stratification associated to a certain
$\C^\times$-action on $\cM_{K,N}$, as follows.

Recall that one of the main tools in the
study of the moduli spaces $\cM_K(C)$
is a $\C^\times$-action thereon,
\begin{equation} \label{eq:Cstar-action-usual-intro}
	(\cE, \varphi) \mapsto (\cE, \e^{t} \varphi), \qquad t \in \C / 2 \pi \I \Z.
\end{equation}
Using this $\C^\times$-action one can reduce
questions about $\cM_K(C)$ to questions localized
to an infinitesimal neighborhood of the fixed locus. This
still leaves the difficulty of understanding that
fixed locus concretely, which involves
diverse components with diverse dimensions and interesting topology.

The action \eqref{eq:Cstar-action-usual-intro}
cannot be taken directly over to $\cM_{K,N}$, morally because it does not preserve
the condition  \eqref{eq:eigenvalue-condition}. However, there is a way of fixing this
problem: we combine the rescaling \eqref{eq:Cstar-action-usual-intro}
with an automorphism of $\CP^1$ fixing $z = \infty$,
\begin{equation}
  \rho_t(z) = \e^{-\frac{K t}{K+N}} z, \qquad t \in \C / 2 \pi \I (K+N) \Z,
\end{equation}
to make
\begin{equation} \label{eq:new-cstar-action-intro}
  (\cE, \varphi) \mapsto (\rho_t^* \cE, \e^t \rho_t^* \varphi), \qquad t \in \C / 2 \pi \I (K+N) \Z.
\end{equation}
Thus we get a $\C^\times$-action on $\cM_{K,N}$,
analogous to the usual one on $\cM_K(C)$.
It preserves the stratification \eqref{eq:strat-intro},
and has a single fixed point in each stratum:
see Proposition \ref{prop:classification-fixed-points} below.

\subsection{The fixed points}

As we have just explained, the $\C^\times$-action \eqref{eq:new-cstar-action-intro}
on $\cM_{K,N}$ has finitely many fixed points, labeled
by cyclic $K$-partitions $[\bfb]$ of $N$.
In particular, all fixed components are $0$-dimensional.

The fixed points can be described explicitly: they have
representative Higgs bundles of the form
\begin{equation} \label{eq:introfixedpoints}
  \cE_\bfb = \bigoplus_{i=1}^K \cO(\alpha_i), \qquad \varphi_\bfb = \begin{pmatrix}
               0 & z^{b_1} & &\\ & &  \ddots & \\
                & & & z^{b_{K-1}}\\
                z^{b_K} & & &
              \end{pmatrix} \de z,
\end{equation}
where
$b_i - \frac{N}{K} = \alpha_{i}-\alpha_{i+1}$.
Moreover, for these Higgs bundles the Hitchin equations
can be reduced to a coupled system of
ODE in the radial coordinate; essentially
this is because the $S^1 \subset \C^\times$ preserves the
form of the Hitchin equations, and its action on $\cM_{K,N}$
involves a rotation in the plane.
The ODE in question is a version of the Toda lattice,
\eqref{eq:toda} below.

Thus we obtain a very concrete description of the solutions
of Hitchin equations which arise at the $\C^\times$-fixed
points in $\cM_{K,N}$.
This allows us to compute some invariants of
the solutions in closed form. In particular, we consider
a regulated version of the $L^2$ norm of the Higgs field:
\begin{equation} \label{eq:regulated-norm-intro}
\mu = \frac{\I}{\pi} \int \mathrm{Tr} \left( \varphi \wedge \varphi^{\dagger_h} - \Id |z|^{2N/K} \, \de z \de \zbar \right).
\end{equation}
This regulated
norm turns out to be a rational number, computable in terms of the
parabolic degrees $\alpha_i$ by
\begin{equation}
	\mu = \frac{K}{K+N} \norm{\bfalpha}^2.
\end{equation}
This is analogous to the case of $\cM_2(C)$ \cite{hitchin87}, where
the $L^2$ norm of the Higgs field at the $\C^\times$-fixed points
turns out to be half-integer, and (when nonzero) determined
by the degree of a certain line subbundle of $\cE$.

\subsection{The central fiber}\label{sec:centralfiber}
All of the $\C^\times$-fixed points 
belong to the fiber of $\cM_{K,N}$ lying
over the spectral curve $y^K = z^N$.  We call this fiber the \ti{central fiber}, and we expect (but do not prove) that it is the
compactified Jacobian of the curve $y^K = z^N$.

The compactified Jacobian of the curve $y^K=z^N$ has been 
studied in \cite{Piontkowski} in the algebraic language
of rank-1 torsion-free 
$R=\C[y,z]/(y^K-z^N)$-modules.\footnote{Equivalently, points
of the compactified Jacobian represent torsion-free (but not necessarily locally free) sheaves of rank $1$ and degree $0$ over the spectral curve $y^K=z^N$.}
Proposition 5 of \cite{Piontkowski} 
describes the $R$-modules which appear to correspond to our $\C^\times$-fixed points.
In \S\ref{sec:otherinterpretations} we spell out a dictionary
between these objects and our fixed points,
proposed to the first author by Eugene Gorsky.

\subsection{$W$-algebra minimal models} \label{sec:introW}

Now we come to a surprising fact, which was the initial
motivation for writing this paper:
the rational numbers $\mu$ which we have associated to the
fixed points by \eqref{eq:regulated-norm-intro}
turn out to have another, quite different meaning.

A little background (see \S\ref{sec:Walgebra} for more):
for every $(K,N)$ there is a well-known
vertex algebra $\cW_{K}$ and a certain
package $\Lambda_{K,N}$ of representations of $\cW_{K}$, called a
\ti{minimal model};
for each representation there is a real number $c_\eff$,
the \ti{effective Virasoro central charge}.
Our observation is that there is a canonical correspondence
between the $\C^\times$-fixed points in $\cM_{K,N}$ and the
representations in $\Lambda_{K,N}$, under which
$\mu$ is very simply related to $c_\eff$:
\begin{equation} \label{eq:muc-relation}
 \mu = \frac{1}{12} \left(K - 1 - c_\eff \right).
\end{equation}

\subsection{Argyres-Douglas theories} \label{sec:argyres-douglas}

The formula \eqref{eq:muc-relation} is puzzling.
Why should
$\cW_{K}$ and $\cM_{K,N}$ have anything to do with
one another?

One physics context where $\cM_{K,N}$ arises
was described in \cite{GMNhitchin} building on
\cite{Witten:1997sc,Cherkis:2000cj,Cherkis:2000ft}
(see also \cite{Neitzke:2014cja} for a review): $\cM_{K,N}$
is the moduli space of a certain four-dimensional
$\cN=2$ supersymmetric
quantum field theory, compactified to three dimensions on $S^1$.
The field theory in question is
known as the \ti{Argyres-Douglas theory
of type $(A_{K-1}, A_{N-1})$} \cite{GMNhitchin,Cecotti:2010fi,Xie2012a}.

In this context the vertex algebra $\cW_{K}$ also makes an
appearance. Indeed, it was very recently
shown in \cite{vertexalgebras} that every $\cN=2$
supersymmetric field theory has an associated vertex algebra.
Shortly afterward, in \cite{cordovashao},
it was proposed that the vertex algebra
for the Argyres-Douglas theory of type $(A_{K-1}, A_{N-1})$
should be $\cW_{K}$, and the relevant representations should
be those appearing in $\Lambda_{K,N}$.
Considerable circumstantial evidence in favor of this proposal
has subsequently been given in
\cite{Song2015,Cordova:2016uwk,Cordova:2017mhb,Cordova:2017ohl}.

So at least $\cM_{K,N}$, $\cW_{K}$ and $\Lambda_{K,N}$ all arise
in the context of the Argyres-Douglas theory. One may hope
that the explanation of the formula \eqref{eq:muc-relation}
will also ultimately be found in that theory.
So far we have not found such an explanation;
the correspondence seems to us to be the tip
of an iceberg of unknown size.

After the main results of this paper had been found,
they were used (in the case $K=2$)
in the work \cite{FPYY}, which concerns
a supersymmetric index in Argyres-Douglas theories.
That work also significantly
broadens the scope of the correspondence, by exhibiting several other
examples of Higgs bundle moduli spaces, their corresponding
vertex algebras, and matchings between fixed components
and vertex algebra representations; this includes
examples where some fixed components have nonzero dimension.
The results of this paper were also used very recently
in \cite{Neitzke2017}, which concerns different 
supersymmetric indices (``line defect Schur indices'') in Argyres-Douglas
theories, which are linear combinations of characters of 
representations in $\Lambda_{K,N}$
as previously observed in \cite{Cordova:2016uwk}.

\subsection{The case of $\cM_{2,1}$ and ends of the moduli space $\cM_K(C)$}

The simplest example of our construction is the set $\cM_{2,1}$, which
has only one element. A representative good
filtered Higgs bundle is
\begin{equation}\label{eq:fid}
 \cE \simeq \cO\left( \frac14\right) \oplus \cO\left(-\frac14\right), \qquad \varphi = \begin{pmatrix} 0 & z\\ 1 & 0 \end{pmatrix} \de z.
\end{equation}
The corresponding harmonic metric on $\cE$ is
\begin{equation}\label{eq:fidh}
 h= \begin{pmatrix} |z|^{-1/2}\e^{-u} & \\ & |z|^{1/2} \e^{u} \end{pmatrix},
\end{equation}
where $u=u(|z|)$ is the solution of the Painleve III ODE
\begin{equation}
 \left( \frac{\de^2}{\de|z|^2} + \frac{1}{|z|} \frac{\de}{\de |z|} \right) u = 8 |z|
  \sinh(2 u)
\end{equation}
with $u \sim - \frac{1}{2} \log |z|$ as $z \to 0$ (so that $h$ is smooth)
and $u \rightarrow 0$ as $|z| \rightarrow \infty$ \cite{McCoy-Tracy-Wu}.

This ``fiducial'' solution of Hitchin's equations
on $\C=\CP^1-\{\infty\}$ appeared in \cite{Cecotti:1991me,GMNhitchin}. It
plays a crucial role in Mazzeo-Swoboda-Weiss-Witt's
description of the ``generic ends'' of the moduli space $\cM_2(C)$ \cite{MSWW14},
i.e. the ends corresponding to Higgs bundles for which
the eigenvalues of the Higgs field $\varphi$ have only simple ramification,
$(\lambda_1-\lambda_2)^2 \sim z \de z^2$. Roughly,
as one follows a generic ray toward infinity in $\cM_K(C)$, the harmonic metric on
a sufficiently small disc around a ramification point approaches
the fiducial solution \eqref{eq:fidh}.

In the extension of \cite{MSWW14}
to \ti{non}-generic ends of $\cM_2(C)$,
the points of $\cM_{2,N}$ are expected to play a similar role:
indeed, at ramification points with
$(\lambda_1-\lambda_2)^2 \sim z^N \de z^2$,
some of the relevant model solutions lie in $\cM_{2,N}$.
Similarly the spaces $\cM_{K,N}$ should be relevant for the
extension of \cite{MSWW14} to the higher-rank spaces $\cM_K(C)$.

\subsection{The case of $\cM_{2,3}$} In the final section of this
paper, \S\ref{sec:M23}, we discuss in some detail the next simplest 
example, namely the case of $\cM_{2,3}$. In this case there are just two
strata, $\cM_{2,3}^\rmbig = \cM_{2,3}^{[(2,1)]} \simeq \C^2$ and $\cM_{2,3}^\rmsmall = \cM_{2,3}^{[(3,0)]} \simeq \C$,
and we exhibit explicitly Higgs bundles representing each point
of $\cM_{2,3}$.
$\cM_{2,3}^\rmbig$ consists of $[(\cE, \varphi)]$ where
$\cE \simeq \cO(\frac14) \oplus \cO(-\frac14)$,
while $\cM_{2,3}^\rmsmall$ consists of
$[(\cE, \varphi)]$ where
$\cE \simeq \cO(-\frac34) \oplus \cO(\frac34)$.

We also exhibit directly that the fibers of the Hitchin
map are compact complex tori, except for the central fiber
which is a cuspidal cubic curve.
Each fiber meets $\cM_{2,3}^\rmsmall$ in exactly
one point.
The two $\C^\times$-fixed points, with
$\mu = \frac{9}{20}$ and $\mu = \frac{1}{20}$, correspond to the two
Virasoro representations in the $(2,5)$ Virasoro minimal model,
with $c_\eff = -\frac{22}{5}$, $c_\eff = \frac{2}{5}$ respectively.

\subsection*{Acknowledgements}

We are happy to
thank Philip Boalch, Clay C\'ordova, Eugene Gorsky, Steven Rayan, Szil\'ard Szab\'o
and Fei Yan for useful discussions.
LF acknowledges support from U.S. National Science Foundation grants DMS 1107452, 1107263, 1107367 ``RNMS: GEometric structures And Representation varieties'' (the GEAR Network).
AN's work was supported by National Science Foundation
award 1151693 and by a Simons Fellowship in Mathematics.

\section{The set \texorpdfstring{$\mathcal{M}_{K,N}$}{M(K,N)}}
\label{sec:2}

Fix $K$ and $N$ coprime. In this section we will define a
set $\cM_{K,N}$, parameterizing $SL(K)$-Higgs bundles $(\cE,\varphi)$
on $\CP^1$, with a singularity at $z = \infty$,
obeying a growth condition:
the eigenvalues of the Higgs field $\varphi$
behave as $\lambda_r \sim \e^{2 \pi \I r/K} z^{N/K} \de z$ as $\abs{z} \to \infty$.

Moduli spaces of meromorphic Higgs bundles with poles of arbitrary
order have been considered before, in particular in
\cite{biquardboalch}. That reference includes the technical condition
that the polar part of the Higgs field can be diagonalized
in a neighborhood of each singularity.
We will need to remove this assumption, since in our case
the eigenvalues are ramified in a neighborhood of $z = \infty$.
Thus we use instead the notion of ``good filtered Higgs bundle''
as in \cite{MochizukiToda} \S2.1.1; this allows Higgs bundles
for which the polar part diagonalizes
only after passing to some local ramified cover.

\subsection{Filtered bundles}

We begin with some preliminaries about filtered bundles and filtered
Higgs bundles. For more on this material see e.g. \cite{MochizukiToda}.

Let $C$ be a compact complex curve and let $D \subset C$ be a finite subset.
Let $\cO_C(*D)$ be the sheaf of algebras of rational functions
with poles along $D$, i.e. the localization of $\cO_C$ along $D$.

\begin{defn}\label{defn:filtered}
A \emph{filtered rank $K$ bundle} on $(C, D)$ is a
locally free $\mathcal{O}_C(*D)$-module $\cE$
of finite rank $K$, with an increasing filtration
by locally free $\mathcal{O}_C$-submodules $(\mathcal{P}_\alpha\mathcal{E})_{\alpha \in \R}$ such that
\begin{itemize}
 \item $\mathcal{P}_\alpha \mathcal{E} \vert_{C-D} = \cE \vert_{C-D}$.
 \item $\mathcal{P}_\alpha \mathcal{E} = \underset{\beta>\alpha}{\bigcap}
 \mathcal{P}_\beta \mathcal{E}$.
 \item If $x$ is a local coordinate on a neighborhood $U$ of $p \in D$, then
 $\mathcal{P}_{\alpha-1}\mathcal{E} \vert_{U} = x \mathcal{P}_\alpha \mathcal{E} \vert_{U}$.
 (Thus the filtration $\cP_\bullet \cE$ is determined by the $\cP_\alpha \cE$ with $\alpha \in [0,1)$.)
\end{itemize}
\end{defn}

\begin{defn} Suppose $\cE$ is a filtered bundle over
$(C,D)$.
Given a point $p \in D$, open set $U \subset C$ with $U \cap D = \{p\}$,
and a section $s$ of $\cE$ over $U$,
we define the \ti{order} of $s$ at $p$ to be
\begin{equation}
  \nu_p(s) = \inf \, \{\alpha: s \in \cP_\alpha(\cE)\}.
 \end{equation}
\end{defn}

Direct sums and tensor products of filtered bundles have natural filtered
structures:
\begin{align}
	\cP_\alpha(\cE \oplus \cE') &= \cP_\alpha \cE \oplus \cP_\alpha \cE', \\
	\cP_\alpha (\cE \otimes \cE') &= \sum_{\beta+\gamma = \alpha} \cP_\beta \cE \otimes \cP_\gamma \cE'.
\end{align}

Exterior powers of filtered
bundles also get filtered structures as
subbundles of tensor powers. We have
$\nu_p(s \wedge s') = \nu_p(s \otimes s' - s' \otimes s) \leq \nu_p(s) + \nu_p(s')$,
where in general the inequality can be strict because of cancellations, 
e.g. in the most extreme case,
if $s = f s'$ then $\nu_p(s \wedge s') = \nu_p(0) = - \infty$.
It will be useful later to have a sufficient condition which guarantees that
such a cancellation does not occur:

\begin{lem} \label{lem:filtered-independence}
If $s$ and $s'$ are sections of a filtered bundle $\cE$ over $(C,D)$,
and $\nu_p(s) - \nu_p(s') \notin \Z$, then
$\nu_p(s \wedge s') = \nu_p(s) + \nu_p(s')$.
\end{lem}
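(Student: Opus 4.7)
The plan is to work at the stalk at $p$ and reduce to coordinates via a local splitting of $\cE$ into filtered line bundles. Let $x$ be a uniformizer at $p$. First I would establish the local decomposition: by induction on the rank $K$, using the flag structure of the filtration jumps on a fundamental interval $[0,1)$, one produces a basis $e_1, \dots, e_K$ of the stalk $\cE_p$ (as an $\cO_{C,p}(*D)$-module) with weights $\alpha_i := \nu_p(e_i)$ such that
\begin{equation*}
 \cP_\alpha \cE_p \;=\; \bigoplus_{i=1}^K x^{-\lfloor \alpha - \alpha_i \rfloor}\, \cO_{C,p}\cdot e_i.
\end{equation*}
The filtration on $\wedge^2 \cE$ then splits compatibly, with $\nu_p(e_i \wedge e_j) = \alpha_i + \alpha_j$.

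Writing $s = \sum_i s_i e_i$ and $s' = \sum_j s'_j e_j$ with $s_i, s'_j \in \cO_{C,p}(*D)$, the adapted structure yields the closed formulas
\begin{equation*}
 \nu_p(s) = \max_i \bigl(\alpha_i - \mathrm{ord}_p(s_i)\bigr), \qquad \nu_p(s') = \max_j \bigl(\alpha_j - \mathrm{ord}_p(s'_j)\bigr).
\end{equation*}
Set $\alpha = \nu_p(s)$, $\alpha' = \nu_p(s')$, and let $I$, $J$ denote the nonempty sets of indices attaining these maxima. The hypothesis $\alpha - \alpha' \notin \Z$ forces $I \cap J = \emptyset$, since a common index $i$ would give $\mathrm{ord}_p(s'_i) - \mathrm{ord}_p(s_i) = \alpha - \alpha'$, simultaneously an integer and a non-integer.

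I would then expand $s \wedge s' = \sum_{i<j}(s_i s'_j - s_j s'_i)\, e_i \wedge e_j$ and track leading terms. Pick $i_0 \in I$ and $j_0 \in J$; disjointness gives $i_0 \neq j_0$, so after swapping we may take $i_0 < j_0$. The product $s_{i_0} s'_{j_0}$ has $\mathrm{ord}_p$ equal to $(\alpha_{i_0}-\alpha) + (\alpha_{j_0}-\alpha')$ exactly, whereas $s_{j_0} s'_{i_0}$ has strictly larger order---equality there would place $j_0 \in I$ and $i_0 \in J$ simultaneously, contradicting $I \cap J = \emptyset$. Hence the coefficient of $e_{i_0} \wedge e_{j_0}$ has order precisely $\alpha_{i_0} + \alpha_{j_0} - \alpha - \alpha'$, so this summand contributes $\nu_p = \alpha + \alpha'$ to $s \wedge s'$. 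It follows that $\nu_p(s \wedge s') \geq \alpha + \alpha'$; combined with the upper bound noted just before the lemma statement, this yields equality. The step that needs the most care is the existence of the adapted local basis; once that is in hand, the remainder is a bookkeeping argument in the chosen coordinates.
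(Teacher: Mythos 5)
Your proof is correct and follows essentially the same route as the paper's: both pass to a local basis adapted to the filtration (the parabolic/filtered splitting with $\nu_p(e_i) = \alpha_i$), use the resulting max formula $\nu_p(s) = \max_i(\alpha_i - \mathrm{ord}_p(s_i))$, and observe that $\nu_p(s)-\nu_p(s') \notin \Z$ forces the maxima to be attained at disjoint index sets, so the leading terms cannot cancel in $s \wedge s'$. The paper's version is merely terser, leaving implicit both the existence of the adapted basis and the $2\times 2$-minor bookkeeping in $\wedge^2\cE$ that you carry out explicitly.
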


\begin{proof} The proof is motivated by the equivalence between filtered bundles and 
parabolic bundles.
Let $0 \leq \alpha_1 \leq \alpha_2 \leq \cdots \leq \alpha_K <1$ be the weights (with multiplicity) where $\cP_{\alpha_i} \cE \neq \cP_{\alpha_i - \eps}\cE$. Let $e_i$ be a local basis of sections in which $\nu_p(e_i) = -\alpha_i$.
Locally we can express $s$ and $s'$  in the basis $\{e_i\}$ as $s=f_i e_i$ and $s'=g_i e_i$ where $f_i$ and $g_i$ are meromorphic functions in $x$, the holomorphic coordinate centered at $p$.  Then $\nu_p(s)=\max_i \left( \deg_{\frac{1}{x}} f_i +\alpha_i \right)$, and similarly for $s'$.
Precisely because $\nu_p(s)-\nu_p(s') \notin \Z$,
the maximum is not attained at the same index.
Consequently, the leading order parts are linearly independent, and the orders add.
\end{proof}

\begin{defn}
A \emph{filtered $SL(K)$-bundle} on $(C,D)$ is a filtered rank $K$
bundle $\cE$ on $(C,D)$
with a global section $\omega \in \wedge^K \cE$, which gives
a trivialization of $\wedge^K \cE$ on $C - D$, and has
$\nu_p(\omega) = 0$ for each $p \in D$.
\end{defn}

\begin{defn}
A \emph{filtered $SL(K)$-Higgs bundle} on $(C, D)$ is a pair
$(\cE, \varphi)$ where $\cE$ is a filtered
$SL(K)$-bundle on $(C,D)$, and $\varphi$ is a traceless meromorphic
section of $\cE$, holomorphic on $C - D$.
\end{defn}

\begin{defn} \label{def:pdeg} If $\cL$ is a filtered line bundle over $(C,D)$,
its \ti{parabolic degree} $\pdeg \cL \in \R$ is defined as follows. For
any $p \notin D$ we define $\nu_p$ to be the ordinary pole order.
Then, fix any meromorphic section $s$ of $\cL$, and let
\begin{equation}\label{eq:pdeg}
  \pdeg \cL = -\sum_{p \in C} \nu_p(s).
\end{equation}
(The sum runs over all points of $C$, but it only receives nonzero
contributions from finitely many points. It is straightforward to check
that $\pdeg \cL$ is independent of the chosen $s$.)
If $\cE$ is a filtered rank $K$ vector bundle, then we define
$\pdeg \cE = \pdeg \wedge^K \cE$.
\end{defn}

\subsection{Filtered bundles over \texorpdfstring{$(\C\PP^1, \{\infty\})$}{(CP1,infty)}}

This paper mainly concerns filtered bundles over $(\C\PP^1, \{\infty\})$.
Thus we develop a few basic facts about these here.

\begin{defn} For any $\alpha \in \R$, let $\cO(\alpha)$ be
the filtered line bundle over $(\CP^1, \{\infty\})$ defined as follows:
the $\cO_{\CP^1}(*\{\infty\})$-module is just $\cO_{\CP^1}(*\{\infty\})$
itself, and the filtration is by pole order at $\infty$ shifted by $-\alpha$.
\end{defn}

We will frequently use some elementary facts about $\cO(\alpha)$:
\begin{itemize}
\item $\cO(\alpha)$ comes with a canonical trivialization
away from $z = \infty$, by a section $e$, corresponding to the
element $1 \in \cO_{\C\PP^1}(*\{\infty\})$. This section has
$\nu_\infty(e) = -\alpha$; up to scalar multiple, it is the unique
section with this property which is regular away from $z = \infty$.
\item The most general section of
$\cO(\alpha)$ which is regular away from $z = \infty$
is of the form $s = f(z) e$ for a polynomial $f$,
and has $\nu_\infty(s) = - \alpha + \deg f$.
\item $\pdeg \cO(\alpha) = \alpha$.
\item For $\alpha \in \Z$, the filtered line bundle $\cO(\alpha)$
is equivalent to the usual line bundle $\cO(\alpha)$, when the latter
is equipped with the filtration by pole order at $\infty$.
\end{itemize}

Now we can state the analogue of Grothendieck's lemma
for filtered bundles over $(\C\PP^1, \{\infty\})$:

\begin{lem} \label{lem:parabolic-grothendieck}
Suppose that $\cE$ is a filtered $SL(K)$-bundle
over $(\C\PP^1, \{\infty\})$. Then there is a decomposition of filtered bundles
\begin{equation}
  \cE = \bigoplus_{i=1}^K \cL_i
\end{equation}
where each $\cL_i \simeq \cO(\alpha_i)$ for some $\alpha_i$, and $\sum \alpha_i = 0$.
\end{lem}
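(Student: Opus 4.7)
My plan is to proceed by induction on $K$, following the structure of the classical proof of Grothendieck's splitting lemma on $\CP^1$ and modifying it to respect the filtration at $\infty$. For the base case $K=1$, any rank-$1$ locally free $\cO_{\CP^1}(*\{\infty\})$-module is trivial (since $\mathrm{Pic}(\C)=0$), and the filtration is specified by a single real weight $\alpha_1$, giving $\cE \simeq \cO(\alpha_1)$; the $SL(1)$-trivialization $\omega$ with $\nu_\infty(\omega)=0$ then forces $\alpha_1=0$. For the inductive step I would find it cleanest to strengthen the statement by temporarily dropping the $SL$-assumption, proving that every filtered rank-$K$ bundle on $(\CP^1,\{\infty\})$ splits as $\bigoplus \cO(\alpha_i)$, after which $\sum \alpha_i = \pdeg \wedge^K \cE = 0$ is immediate from the existence of $\omega$ together with the definition of parabolic degree.

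In the inductive step I would first exhibit a filtered line subbundle $\cL_1 \hookrightarrow \cE$ of maximal parabolic degree $\alpha_1$. Parabolic degrees of filtered line subbundles are bounded above because for any $\cL \simeq \cO(\beta)$, the subsheaf $\cP_0 \cL \simeq \cO_{\CP^1}(\lfloor \beta \rfloor)$ sits as an $\cO_{\CP^1}$-line subsheaf of $\cP_0 \cE$, and classical Grothendieck applied to $\cP_0 \cE \simeq \bigoplus_j \cO(n_j)$ gives $\lfloor \beta \rfloor \le \max_j n_j$. They form a discrete subset of $\R$ because the fractional part of $\beta$ must equal one of the finitely many weights $\alpha \in [0,1)$ at which the filtration of $\cE$ at $\infty$ jumps. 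Hence the supremum $\alpha_1$ is attained by some $\cL_1 \simeq \cO(\alpha_1)$. The quotient $\cE' := \cE / \cL_1$ inherits a filtered rank-$(K-1)$ structure via $\cP_\alpha \cE' := \cP_\alpha \cE / \cP_\alpha \cL_1$, and by the inductive hypothesis $\cE' \simeq \bigoplus_{i=2}^K \cO(\alpha_i)$.

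To split the resulting extension $0 \to \cL_1 \to \cE \to \cE' \to 0$ in the filtered category, I would compute the filtered sheaf-Hom: a direct check in the local coordinate at $\infty$ shows $\mathcal{H}\!om_{\mathrm{filt}}(\cO(\alpha_i), \cO(\alpha_1)) \simeq \cO_{\CP^1}(\lfloor \alpha_1 - \alpha_i \rfloor)$ as an ordinary holomorphic line bundle on $\CP^1$, so
\begin{equation*}
\mathrm{Ext}^1_{\mathrm{filt}}(\cO(\alpha_i), \cO(\alpha_1)) \;=\; H^1\!\left( \CP^1, \cO(\lfloor \alpha_1 - \alpha_i \rfloor) \right),
\end{equation*}
which vanishes provided $\alpha_i \le \alpha_1 + 1$. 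To verify this (in fact the stronger $\alpha_i \le \alpha_1$), lift each summand $\cO(\alpha_i) \subset \cE'$ via $\pi : \cE \to \cE'$ to a rank-$2$ filtered subbundle $\tilde{\cE}_i := \pi^{-1}(\cO(\alpha_i)) \subset \cE$, fitting into $0 \to \cL_1 \to \tilde{\cE}_i \to \cO(\alpha_i) \to 0$. Applying the inductive hypothesis to $\tilde{\cE}_i$ splits it as $\cO(\gamma_1) \oplus \cO(\gamma_2)$ with $\gamma_1 + \gamma_2 = \alpha_1 + \alpha_i$; the summand of larger parabolic degree embeds into $\cE$, so maximality of $\alpha_1$ forces $\max(\gamma_1,\gamma_2) \le \alpha_1$, whence $\alpha_i \le \alpha_1$. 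Combining these ingredients splits $\cE$, and the $SL(K)$-condition then yields $\sum \alpha_i = 0$.

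The main obstacle will be the rank-$2$ base case of this sub-induction, which cannot itself be handled by the argument above without circularity. I would treat it directly: start from the classical Grothendieck decomposition $\cP_0 \cE \simeq \cO(n_1) \oplus \cO(n_2)$ with $n_1 \ge n_2$, and use the freedom in the Grothendieck splitting -- parametrized by the automorphism group whose unipotent part is the space of polynomials of degree $\le n_1 - n_2$ -- to realign the two line-bundle summands so that each summand's fiber at $\infty$ lies at a prescribed level of the filtration flag at $\infty$. Once this alignment is achieved (which requires only matching two lines to a flag with at most two nontrivial weights, using the unipotent freedom when $n_1>n_2$ and the $\mathrm{GL}_2$-freedom when $n_1=n_2$), the classical splitting of $\cP_0 \cE$ upgrades automatically to a splitting of filtered bundles into $\cO(\alpha_1) \oplus \cO(\alpha_2)$.
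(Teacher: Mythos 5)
Your proof is correct, and while it shares the paper's overall skeleton --- induction on the rank, a filtered line subbundle of maximal parabolic degree, and splitting of the resulting extension via vanishing of $H^1$ of the filtered Hom sheaf $\cO(\lfloor \alpha_1 - \alpha_i \rfloor)$ (the paper carries out this same cohomology computation longhand with Laurent series) --- it genuinely differs in the two places where the real work happens. First, the paper produces its line subbundle as the span of a global section $\psi$ minimizing $\nu_\infty$, with existence of the minimum coming from Serre vanishing; this choice is slicker than your bounded-plus-discrete argument on parabolic degrees, because minimality of $\nu_\infty(\psi)$ directly bounds the degrees of the quotient summands (a section of a quotient summand of too-large degree would lift, after a twist killing the $H^1$ obstruction, to a global section of $\cE$ of order less than $\beta$), although the paper leaves even this implicit, simply asserting $\nu_\infty(s) = \pdeg \cL_i - \pdeg \cL \le 0$. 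Your characterization of $\cL_1$ by maximality of $\pdeg$ among filtered line subbundles does \emph{not} immediately bound the quotient degrees, which is why you need the detour through the rank-$2$ preimages $\tilde{\cE}_i = \pi^{-1}(\cO(\alpha_i))$; that trick is sound (granting the routine verifications that the quotient and induced filtrations satisfy the axioms of Definition \ref{defn:filtered} and that $\pdeg$ is additive in filtered extensions), but it forces your second departure: a standalone rank-$2$ case, which you settle via the filtered/parabolic dictionary by realigning the classical Grothendieck splitting of $\cP_0 \cE$ so that a summand fiber at $\infty$ matches the flag line. That alignment argument is correct: when $n_1 > n_2$ the unipotent automorphisms parametrized by $H^0(\cO(n_1-n_2))$ move the second summand's fiber at $\infty$ onto every line except the fiber of the $\cO(n_1)$ summand, and in that remaining case the flag is already aligned, while for $n_1 = n_2$ the $GL_2$-freedom acts transitively. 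In exchange for being longer, your route is more modular and concrete: the rank-$2$ parabolic-alignment step exhibits the geometric mechanism behind the splitting, and your identification of the extension group as $H^1\bigl(\CP^1, \cO(\lfloor \alpha_1 - \alpha_i \rfloor)\bigr)$ packages cleanly what the paper checks by hand, whereas the paper's minimal-section device is shorter and uniform in rank.
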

\begin{proof} The proof is parallel to a standard proof of the ordinary
Grothendieck lemma, found e.g. in \cite{MR2093043}.

We induct on $K$. Let $\beta$ be the
minimum value attained by $\nu_\infty$ on a global section of $\cE$
(to see that a minimum does exist, note that Serre vanishing says
there are no global sections of $\cP_0 \cE \otimes \cO(-n)$
for large enough $n$).
Fix a global section $\psi$ with $\nu_\infty(\psi) = \beta$.
Then $\psi$ spans a filtered line subbundle $\cL \subset \cE$,
with $\pdeg \cL = - \beta$.
The filtered bundle $\cE$ is an extension,
\begin{equation}
  0 \to \cL \to \cE \to \cE / \cL \to 0,
\end{equation}
and by the inductive hypothesis $\cE / \cL = \bigoplus_{i=1}^{K-1} \cL_i$.
Our main problem is to show that the extension is split.
This works out just as in the case of ordinary vector bundles over $\CP^1$:
the extension class lies in $H^1(\cL')$ for $\cL'$ a filtered
line bundle over $(\CP^1, \infty)$ with $\pdeg \cL' \ge 0$,
and this cohomology group vanishes. In the rest of the proof
we spell this out longhand.

Choose local splittings $s_0, s_\infty: \cE / \cL \to \cE$
over patches $U_0, U_\infty$.
The difference $s_0 - s_\infty$ lifts to
a map $t: \cE / \cL \to \cL$ over $U_{0} \cap U_\infty$.
By adjusting the choice of $s_0$
and $s_\infty$ we can adjust $t \to t - \delta_0 - \delta_\infty$
where $\delta_0,\delta_\infty$ are maps $\cE / \cL \to \cL$ over $U_0$,
$U_\infty$ respectively. Using the inductive hypothesis it thus suffices
to show that every $t: \cL_i \to \cL$ over $U_0 \cap U_\infty$
can be realized as $\delta_0 + \delta_\infty$.
To see this, trivialize $\Hom(\cL_i, \cL)$ by a section $s$ away from
$z = \infty$; then $t = f(z) s$ for some meromorphic
$f(z)$ with singularities at $z = 0$ and $z = \infty$.
Expanding $f(z)$ in a Laurent series, the terms
of degree $\ge 0$ extend over $z = 0$, while the terms
of degree $\le -\nu_\infty(s)$ extend over $z = \infty$.
Since $\nu_\infty(s) = \pdeg \cL_i - \pdeg \cL \le 0$,
every term extends either over $z = 0$ or over $z = \infty$,
which gives the desired splitting.

Finally, the fact that $\omega \in \wedge^K \cE$ has $\nu_\infty(\omega) = 0$
shows that $\pdeg \wedge^K \cE = 0$, which implies $\sum_i \pdeg \cL_i = 0$
as desired.
\end{proof}

\begin{rem}
An analog of Lemma \ref{lem:parabolic-grothendieck}
is true for divisors containing two points,
and can be proven by a similar argument. In contrast,
if $D$ consists of three or more points,
then not all filtered vector bundles over $(\CP^1, D)$
are direct sums of filtered line bundles.
\end{rem}

\subsection{Good filtered Higgs bundles}\label{sec:goodfiltered}

Now we are ready to introduce the ``diagonalizability'' conditions
on the Higgs fields near the singularities.

\begin{defn}  A filtered $SL(K)$-Higgs bundle $(\mathcal{E}, \varphi)$ on $(C,D)$
 is \emph{unramifiedly good}
 if near each point $p \in D$ there is
 \begin{itemize}
 \item[$\bullet$] a local holomorphic coordinate $u$ centered at $p$,
  \item[$\bullet$] a local decomposition of filtered bundles
  \begin{equation} \label{eq:filtered-decomp-1}
     \cE = \bigoplus_{i=1}^{K} \cL_i
   \end{equation}
   where each $\cL_i$ is a filtered line bundle,
  \item[$\bullet$] a choice of singular type $\left(\mathfrak{a}_i \in \frac{1}{u}\C[\frac{1}{u}]\right)_{i=1}^r$,
 \end{itemize}
 such that
 \begin{itemize}
  \item[$\bullet$] $\varphi$ respects the decomposition \eqref{eq:filtered-decomp-1}
  (let $\varphi_i$ denote the restriction to $\cL_i$),
   \item[$\bullet$] $\varphi_i-\de \mathfrak{a}_i$ is logarithmic,
   i.e. $(\varphi_i - \de \mathfrak{a}_i) (\cP_\alpha \cL_i) \subset \cP_{\alpha+1} \cL_i \otimes K_C$.
  \end{itemize}
\end{defn}

We want to consider bundles which are not unramifiedly good, but
merely good, i.e. they become unramifiedly good only after pulling back
to a ramified cover:

\begin{defn}
 A filtered $SL(K)$-Higgs bundle $(\mathcal{E}, \varphi)$
 on $(C, D)$
 is called \emph{good} if near each point $p \in D$
 there is
 \begin{itemize}
 \item[$\bullet$] a local holomorphic coordinate $x$ on $U \ni p$, with $x(p) = 0$,
 \item[$\bullet$] a ramified covering
  \begin{align}
  \psi: \tU &\rightarrow U \subset C\\ \nonumber
  u &\mapsto u^m=x
 \end{align}
 \end{itemize}
  such that $\psi^*(\cE, \varphi)$ is unramifiedly good on $\tU$.
  Here $\psi^* \cE$ is equipped with its natural filtered structure
  \cite{MochizukiToda}, such that for
pulled-back sections we have\footnote{The extra factor of $m$ here is required for consistency, since the local coordinate $t$ on the cover
must have
$\nu_{\psi^{-1}(p)}(t) = 1$, while on the base we have $\nu_p(x) = 1$,
and $\psi^* x = u^m$.
}
\begin{equation} \label{eq:pullback-weight}
\nu_{\psi^{-1}(p)}(\psi^* s) = m\nu_p(s).
\end{equation}

\end{defn}

\subsection{Good filtered Higgs bundles on \texorpdfstring{$(\C\PP^1, \{\infty\})$}{CP1}} \label{sec:CKN}

Next we introduce the specific class of good filtered Higgs
bundles on $(\CP^1, \{\infty\})$ which we study.

\begin{defn} \label{defn:category}
Let $\cC_{K,N}$ be the category of good filtered $SL(K)$-Higgs bundles
$(\mathcal{E}, \varphi)$ over $(\CP^1, \{\infty\})$,
where:

\begin{itemize}
\item On a disc $U$ around $z = \infty$ we choose the coordinate $x = z^{-1}$
and the ramified covering $\psi: \tU \to U$ given by
\begin{equation}
x(\psi(u)) = u^{2K}.
\end{equation}
\item The singular type is
\begin{equation} \label{eq:singular-type}
  \fa_r = \frac{K}{K+N} \e^{2 \pi \I r/K} u^{-2(N+K)}.
\end{equation}
\item In the decomposition \eqref{eq:filtered-decomp-1} of $\psi^* \cE$ 
over $\tU$, each filtered line bundle $\cL_i$ is equivalent to an ordinary 
line bundle with the standard filtration by pole order at $u = 0$.
\end{itemize}
A morphism in $\cC_{K,N}$ is an
isomorphism of filtered bundles preserving the Higgs fields.
\end{defn}

Said otherwise, the Higgs bundles in $\cC_{K,N}$ are ones for which there exists a trivialization $g: \psi^* \cE \to \cO^{\oplus K}_\tU$ with
\begin{equation} \label{eq:gauge}
g \left( \psi^*\varphi \right) g^{-1} = \varphi_{\mathrm{model}} + (\text{holomorphic in } u),
\end{equation}
where
\begin{equation}\label{eq:localmodelphi}
\varphi_{\mathrm{model}}=-2K\begin{pmatrix} \e^{2 \pi \I / K} &  & & \\ & \e^{4 \pi \I / K} & & \\ & & \ddots & \\ & & & 1 \end{pmatrix}
 \frac{\de u}{u^{2K+2N+1}},
\end{equation}
and the filtration on $\psi^* \cE$ is induced from the standard filtration
on $\cO^{\oplus K}_{\tU}$.

\begin{defn}
Let $\cM_{K,N}$ be the set of objects in $\cC_{K,N}$
up to isomorphism.
\end{defn}

In this paper we will only treat $\cM_{K,N}$ as a set,
although we expect it to be a coarse moduli space, and to
carry many of the same structures
as the familiar moduli spaces $\cM_{K}(C)$, as
described in the introduction.

\subsection{The Hitchin base}

As with $\cM_{K}(C)$, we define the Hitchin map on $\cM_{K,N}$
by taking characteristic polynomials of Higgs fields:
\begin{equation}
	\pi([(\cE, \varphi)]) = \charp \varphi = \det (\lambda - \varphi(z)).
\end{equation}
Let $\cB_{K,N}$ denote the image of $\pi$. Then:
\begin{prop} \label{prop:hitchinbase}
$\cB_{K,N}$ is the space of polynomials of the form
\begin{equation} \label{eq:charpoly-form}
 (\lambda^K - z^{N} \de z^K) + (P_{2}(z) \de z^2 \lambda^{K-2} + \cdots + P_i(z) \de z^i \lambda^{K-i} + \cdots + P_K(z) \de z^K),
\end{equation}
where each $P_i(z)$ is a polynomial, with
\begin{equation}
 \deg P_i \le \frac{N(i-1)}{K} - 1.
\end{equation}
\end{prop}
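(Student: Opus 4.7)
The plan is to prove both inclusions: every characteristic polynomial of $(\cE, \varphi) \in \cC_{K,N}$ has the claimed form, and conversely every polynomial of that form is realized. Writing $\charp(\varphi) = \lambda^K + \sum_{i=1}^K c_i \lambda^{K-i}$, tracelessness of $\varphi$ forces $c_1 = 0$, and holomorphy on $\CP^1 \setminus \{\infty\}$ forces $c_i = P_i(z)\,\de z^i$ with $P_i \in \C[z]$. The substantive content of the proposition is then (i) the leading $-z^N\,\de z^K$ contribution to $c_K$, and (ii) the degree bounds $\deg P_i \le N(i-1)/K - 1$.

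For the forward direction, I would pull back to the ramified cover $\psi: \tU \to U$, $\psi(u) = u^{2K}$. By Definition \ref{defn:category}, $\psi^*\cE$ decomposes as $\bigoplus_r \cL_r$ with each $\cL_r$ standard filtered and preserved by $\psi^*\varphi$. The filtered logarithmic condition $(\varphi_r - \de\fa_r)(\cP_\alpha \cL_r) \subset \cP_{\alpha+1}(\cL_r \otimes K_C)$ unpacks, for standard filtrations at $u = 0$, to the statement that $\varphi_r - \de\fa_r$ has at most a simple pole at $u = 0$. Consequently each eigenvalue has Laurent expansion
\begin{equation*}
\varphi_r = -2K \e^{2\pi\I r/K} u^{-(2K+2N+1)}\,\de u + O(u^{-1}\,\de u),
\end{equation*}
with \emph{no} intermediate pole-order terms. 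Expanding $\charp(\varphi) = \prod_r(\lambda - \varphi_r)$, the all-leading contribution to $c_i$ is proportional to the $i$-th elementary symmetric polynomial of the $K$-th roots of unity, which vanishes for $1 \le i \le K-1$ (from $\prod_r(t - \e^{2\pi\I r/K}) = t^K - 1$) and gives $-z^N\,\de z^K$ for $i = K$. The next-largest contribution replaces a single $\de\fa_r$ factor by a simple-pole correction, producing $u$-pole order at most $(i-1)(2K+2N+1) + 1$. Translating via $z = u^{-2K}$ and $\de z = -2K u^{-2K-1}\,\de u$, a monomial $z^d\,\de z^i$ has $u$-pole order $2Kd + i(2K+1)$, so the resulting inequality gives $d \le N(i-1)/K - 1$, as desired.

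For the reverse direction, given a polynomial $q$ of the claimed form, I would realize $q$ explicitly by a companion-matrix Higgs bundle: take $\cE = \bigoplus_i \cO(\alpha_i)$ with parabolic weights chosen as in the fixed-point construction \eqref{eq:introfixedpoints}, and let $\varphi$ be a companion-type matrix whose entries are polynomials in $z$ encoding the coefficients of $q$. Its leading behavior at $\infty$ is dominated by the $z^N\,\de z^K$ term and so matches the singular type \eqref{eq:singular-type}; the lower-order $P_i$ terms are subdominant and contribute only logarithmic-type corrections to the eigenvalues on the cover, preserving the goodness of the filtered structure.

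The main obstacle is the careful bookkeeping in the forward direction: translating the filtered logarithmic condition into a precise pole-order bound on $\tU$, confirming that the gap Laurent coefficients between $u^{-(2K+2N+1)}$ and $u^{-1}$ are indeed forced to vanish, and converting $u$-pole orders back into $z$-degrees through the ramified cover. A secondary subtlety, in the reverse direction, is checking that the companion-matrix construction genuinely lies in $\cC_{K,N}$, which may require a local holomorphic gauge transformation near $u = 0$ bringing $\psi^*\varphi$ into the model form \eqref{eq:localmodelphi} up to holomorphic error.
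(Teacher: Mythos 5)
Your forward direction is, up to bookkeeping conventions, the paper's own argument: the paper likewise pulls back along $\psi(u)=u^{2K}$, writes each eigenvalue as $-2K\left(\e^{2\pi\I r/K}u^{-(2K+2N+1)}+f_r(u)\right)\de u$, multiplies out the characteristic polynomial, and converts back to $z$. The only differences are cosmetic: you track $u$-pole orders where the paper tracks fractional $z$-degrees, and you allow a logarithmic error where the paper's reformulation \eqref{eq:gauge} posits a holomorphic one --- under the paper's stronger hypothesis one gets $\deg P_i\le (i-1)\frac{N}{K}-1-\frac{1}{2K}$ as in \eqref{eq:degP} and then rounds using $K\nmid N(i-1)$, while your weaker hypothesis closes exactly, since $2Kd+i(2K+1)\le (i-1)(2K+2N+1)+1$ gives $d\le \frac{N(i-1)}{K}-1$ with no integrality step needed.

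The reverse direction is where you genuinely diverge, and your route is viable but under-justified as written. The paper does not use a raw companion matrix: it builds an analogue of the Hitchin section from a principal $\fsl(2)\subset\fsl(K)$, taking $\varphi=\varphi_0+\sum_i X_{i-1}Q_i\,\de z$, and then must invert the triangular system \eqref{eq:partitionP} relating the $P_k$ to the $Q_k$, including a degree check on the resulting $Q_k$. Your companion matrix realizes the prescribed characteristic polynomial on the nose, so that entire $P\leftrightarrow Q$ induction disappears --- a real simplification. However, your stated justification (``subdominant terms contribute only logarithmic corrections to the eigenvalues'') does not suffice: goodness requires a local holomorphic decomposition matching the model and the standard filtration, not merely eigenvalue asymptotics. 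The deferred check does go through, with literally the paper's gauge transformation \eqref{eq:gaugeHitchinsec}, i.e.\ $g=\hat g\cdot\diag(u^{-N(K+1-2i)})$ and weights $\alpha_i=-\frac{N(K+1-2i)}{2K}$ (your fixed-point weights for $\bfb=(0,\dots,0,N)$): the companion correction sits in the bottom row, $E_{K,j}=-P_{K+1-j}(z)\,\de z$ for $1\le j\le K-1$ (the $(K,K)$ entry vanishes by tracelessness), and conjugating by the diagonal part rescales it by $u^{2N(K-j)}$, giving $u$-pole order at most
\begin{equation*}
2K\deg P_{K+1-j}+(2K+1)-2N(K-j)\;\le\; 2K\left(\left\lfloor\tfrac{N(K-j)}{K}\right\rfloor-1\right)+(2K+1)-2N(K-j)\;\le\;-1,
\end{equation*}
the last step using $\gcd(K,N)=1$, so that $\lfloor N(K-j)/K\rfloor\le \frac{N(K-j)}{K}-\frac1K$. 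Thus the error is actually holomorphic (vanishing at $u=0$), and the filtration verification is word-for-word the paper's. So your plan is correct and arguably cleaner for pure surjectivity; what the paper's construction buys instead is an explicit section of the Hitchin map in the style of \cite{hitchin87, hitchinteichmuller}.
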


For example,
\begin{itemize}
\item
$\cB_{2,N}$ is the space of polynomials $P_2$ with
\begin{equation} \label{eq:B-cond-2}
  \deg P_2 \le \frac{N}{2} - 1.
\end{equation}
\item
$\cB_{3,N}$ is the space of pairs $(P_2, P_3)$ with
\begin{equation} \label{eq:B-cond-3}
  \deg P_2 \le \frac{N}{3} - 1, \quad \deg P_3 \le \frac{2N}{3} - 1.
\end{equation}
\end{itemize}
In general $\mathcal{B}_{K,N}$ is an affine
space of complex dimension
\begin{equation}
\dim \cB_{K,N} = \sum_{i=2}^K \left\lfloor\frac{N(i-1)}{K} \right\rfloor = \frac{1}{2}(K-1)(N-1).
\end{equation}

\begin{proof}
Near $\infty$, the eigenvalues of $\varphi$ are 
\begin{equation}
\lambda_r = -2K\left(\e^{2 \pi \I r / K} u^{-(2K+2N+1)} + f_i(u)\right) \de u
\end{equation}
where $f_i(u)$ is holomorphic. Using $u^{-2K} = z$ this gives
\begin{equation} \label{eq:eigenvalues}
\charp \varphi
= \prod_{r=1}^K \left(\lambda - \e^{2 \pi \I r / K} z^{N/K} \de z - f_i(z^{-1/2K}) z^{-1-1/2K} \de z \right).
\end{equation}
Multiplying out gives
\begin{equation} \label{eq:degP}
	\deg P_i \le (i-1)\left(\frac{N}{K}\right) + \left(-1-\frac{1}{2K}\right).
\end{equation}
Since $N(i-1)$ is not a multiple of $K$ and $\deg P_i$ is necessarily
an integer, we can drop the last $\frac{1}{2K}$
to get
\begin{equation}
	\deg P_i \le (i-1)\left(\frac{N}{K}\right) - 1
\end{equation}
as desired.

To see that $\pi$ is surjective onto $\cB_{K,N}$,
we directly construct a family of filtered Higgs bundles
analogous to the Hitchin section \cite{hitchin87, hitchinteichmuller} in $\cM_{K}(C)$.
To define this family, we first pick a principal $\mathfrak{sl}(2,\C) \subset \mathfrak{sl}(K,\C)$:
\begin{equation}
X^+=\begin{pmatrix} 0 & 1 & & \\ & \ddots & \ddots & \\ & & 0 & 1 \\ & & & 0 \end{pmatrix}, \quad X^- = \begin{pmatrix} 0 &  & & \\ r_1 & 0 & & \\ & \ddots& \ddots &  \\ & & r_{K-1}& 0 \end{pmatrix}, \quad H=[X^+, X^-],
\end{equation}
with $r_i=i(K-i)$.
Let $X_1, \dots, X_{K-1} \in {\mathfrak{sl}}(K,\C)$
be the unique (up to scalar multiplication) matrices
such that $X_{i}$ has nonzero entries only on the $i$-th subdiagonal
and $[X_i, X^-]=0$.
Then, for polynomials $(Q_2(z), \dots, Q_K(z))$
with $\deg Q_i \le \lfloor \frac{N(i-1)}{K} -1 \rfloor$,
we consider
\begin{equation} \label{eq:Hitchinsec}
\cE = \bigoplus_{i=1}^K \, \cO \left(-\frac{N(K+1-2i)}{2K}\right), \quad
\varphi = \varphi_0 +
\sum_{i=2}^K X_{i-1} Q_i \de z, \quad \varphi_0 = \begin{pmatrix}0 & 1 & & \\  & \ddots & \ddots & \\ & & 0& 1 \\ z^N & & & 0 \end{pmatrix} \de z.
\end{equation}
To see that $(\cE, \varphi) \in \cC_{K,N}$, take 
   \begin{equation}\label{eq:gaugeHitchinsec}
 g = \hat{g} \cdot {\mathrm {diag}} (u^{-N(K+1-2i)})_{i=1}^K , \qquad
     \hat{g}_{jk} = \left( \e^{\frac{2 \pi \I}{K}} \right)^{jk}.
  \end{equation}
We want to show that
$g\left( \psi^*\varphi\right) g^{-1}- \tilde{\varphi}_{\mathrm{model}}$
is holomorphic in $u$. 
We compute:
\begin{equation}
	g (\psi^* \varphi_0) g^{-1} = \tilde{\varphi}_{\mathrm{model}},
\end{equation}
and
\begin{equation} \label{eq:pieceHiggs}
g \psi^*(X_{i} Q_{i+1} \de z)  g^{-1}
= \hat{g} \left( X_i Q_{i+1} z^{-\frac{Ni}{K}} \de z \right) \hat{g}^{-1}.
\end{equation}
Since
$\deg_z (Q_{i+1} z^{-\frac{Ni}{K}})\leq-1- \frac{1}{K}$
and $\de z \sim u^{-(2K+1)} \de u$,
\eqref{eq:pieceHiggs} is holomorphic in $u$,
as needed.

To see that the filtration on $\psi^*\cE$ is induced from the standard
filtration on $\cO^{\oplus K}$, recall that
$\cO(\alpha_i)$ has a canonical trivialization away from $z=\infty$,
by a section $e_i$ with $\nu_\infty(e_i)=-\alpha_i$.
Since $\psi$ is a $2K:1$ cover, $\nu_\infty( \psi^*e_i)=2K \nu_\infty(e_i)$.
From \eqref{eq:gaugeHitchinsec}, note that the gauge transformation $\hat{g}^{-1} g$ 
is diagonal and acts by multiplication by $u^{-N(K+1-2i)}$ on $e_i$,
hence $\nu_\infty(\hat{g}^{-1} g \psi^*e_i) = \nu_\infty(\psi^* e_i) -N(K+1-2i)$.
The gauge transformation $\hat{g}$ is regular at $z = \infty$, so
$\nu_\infty(g \psi^* e_i)=\nu_\infty(\hat{g}^{-1} g \psi^*e_i)$.
Altogether we get that $\nu_\infty(g \psi^* e_i)=0$, as claimed.

Finally, to see that this family maps surjectively onto $\cB_{K,N}$,
note that the coefficient of $\lambda^k$ in $\charp \varphi$ is $(-1)^k \mathrm{tr} (\wedge^k \varphi)$, where $\wedge^k \varphi: \wedge^k(\cE ) \rightarrow
\wedge^k( \cE  \otimes K_{\CP^1})$ is the induced
map.
Consequently, $P_k$ is related to $(Q_2, \dots, Q_K)$ by
\begin{equation} \label{eq:partitionP}
P_k = \sum_{i_1+\cdots + i_{m}=k} c_{i_1,\dots, i_m} \prod_{j=1}^m Q_{i_j}
 \end{equation}
 for some constants $c_\bullet$.  Given $P_2, \dots, P_K$,
 the corresponding $Q_k$ can be inductively determined: $Q_2$ can be determined from $P_2$, $Q_3$ can be determined
from $P_3$ and $Q_2$, etc.
To see that the $Q_k$ thus obtained have degree at most
$\lfloor \frac{N(k-1)}{K}-1 \rfloor$ as claimed, note that
$P_k$ has degree at most $\lfloor \frac{N(k-1)}{K}-1 \rfloor$
and all the other terms in \eqref{eq:partitionP} also
have degree (strictly) less than $\lfloor \frac{N(k-1)}{K}-1 \rfloor$.
\end{proof}

\begin{rem}\label{rem:irreducible}
One immediate consequence of this description of $\cB_{K,N}$ is that
every good filtered Higgs bundle $(\cE, \varphi) \in \cC_{K,N}$
is stable, for the simple reason that it has no proper
Higgs subbundle, since every polynomial in $\cB_{K,N}$
is irreducible over $\C[z]$. Indeed, \eqref{eq:eigenvalues}
gives its factorization over $\C[z^{1/2K}]$,
and the product of a proper subset of the factors --- say
$r$ of them, with $0 < r < K$ ---
cannot be in $\C[z]$, since the highest-degree part would
have degree $r \frac{N}{K}$ in $z$.
\end{rem}

\subsection{The stratification of \texorpdfstring{$\mathcal{M}_{K,N}$}{M(K,N)}}

The filtered bundles $\cE$ which can appear in pairs $(\cE, \varphi) \in \cC_{K,N}$
are of a special kind, as we now explain.
For convenience we introduce the notation
\begin{equation} \label{eq:tildenotation}
 \bar\varphi = \varphi / \de z.
\end{equation}

\begin{lem} \label{lem:nu-phi}
Suppose $(\mathcal{E}, \varphi) \in \mathcal{\cC}_{K,N}$.
For any section $s$ of $\cE$ in a
neighborhood $U$ of $\infty$, we have
\begin{equation} \label{eq:nu-shift}
  \nu_\infty(\bar\varphi(s)) = \nu_\infty(s) + \frac{N}{K}.
\end{equation}

\end{lem}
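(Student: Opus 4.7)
The plan is to transport the problem to the ramified cover $\psi:\tU\to U$ from Definition~\ref{defn:category}, where the Higgs field is controlled by the explicit model $\varphi_{\mathrm{model}}$, then read off the order shift and pull the conclusion back to the base.

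First I would use the pullback rule \eqref{eq:pullback-weight}: writing $p' = \psi^{-1}(\infty)$, any section $s$ near $\infty$ satisfies
\begin{equation*}
\nu_{p'}(\psi^* s) \;=\; 2K\,\nu_\infty(s),
\end{equation*}
and likewise $\nu_{p'}(\psi^*(\bar\varphi(s))) = 2K\,\nu_\infty(\bar\varphi(s))$. So it suffices to prove the pulled-back analogue, namely that $\nu_{p'}$ shifts by $2N$ when we apply $\psi^*\bar\varphi$.

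Next I would compute $\psi^*\,\de z$. Since $z = x^{-1} = u^{-2K}$, we have $\psi^*\,\de z = -2K\,u^{-2K-1}\,\de u$. Combining this with the defining equation \eqref{eq:gauge}, the trivialization $g$ from Definition~\ref{defn:category} turns $\psi^*\bar\varphi = (\psi^*\varphi)/\psi^*\,\de z$ into
\begin{equation*}
g\bigl(\psi^*\bar\varphi\bigr)g^{-1} \;=\; \frac{\varphi_{\mathrm{model}} + H(u)\,\de u}{-2K\,u^{-2K-1}\,\de u} \;=\; u^{-2N}\,D(u),
\end{equation*}
where $H(u)$ is holomorphic, and
\begin{equation*}
D(u) \;=\; \diag\!\bigl(\e^{2\pi\I r/K}\bigr)_{r=1}^{K} \;+\; O(u^{2K+1+2N})
\end{equation*}
is holomorphic in $u$ with $D(0)$ invertible. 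The key point is that the leading model term is diagonal with nonzero entries, so multiplication by $D(u)$ preserves the order at $u=0$; hence the pole order in $u$ of $u^{-2N}D(u)\,v$ exceeds that of $v$ by exactly $2N$, for any vector-valued meromorphic $v$, with no cancellation.

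Applying this to $v = g\,\psi^* s$, and using that $g$ is a filtered isomorphism between $\psi^*\cE$ and $\cO_{\tU}^{\oplus K}$ with its standard filtration (so $\nu_0(v) = \nu_{p'}(\psi^* s)$), I get
\begin{equation*}
\nu_{p'}\bigl(\psi^*(\bar\varphi(s))\bigr) \;=\; \nu_0\bigl(u^{-2N}D(u)\,v\bigr) \;=\; 2N + \nu_{p'}(\psi^* s) \;=\; 2N + 2K\,\nu_\infty(s).
\end{equation*}
Dividing both sides by $2K$ yields the claim. The only potential obstacle is the cancellation issue in the preceding step, but it is dispatched by the invertibility of $D(0)$: the model term dominates by a margin of order $u^{2K+1+2N}$, so the leading diagonal behavior cannot be killed by the subleading holomorphic correction.
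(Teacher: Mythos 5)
Your proof is correct and follows essentially the same route as the paper's: pull back to the ramified cover via \eqref{eq:pullback-weight}, use the gauge \eqref{eq:gauge} to reduce to the explicit model \eqref{eq:localmodelphi}, observe that $\varphi_{\mathrm{model}}/\de z$ shifts the upstairs weight by exactly $2N$, and divide by $2K$. Your treatment is in fact slightly more careful than the paper's, since you explicitly rule out cancellation from the holomorphic correction term by invoking the invertibility of $D(0)$, a point the paper's proof passes over in silence.
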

\begin{proof}
We compute ``upstairs'' on the ramified cover $\tU$, using
\eqref{eq:gauge}.
The explicit formula \eqref{eq:localmodelphi}, together
with the fact that the filtration on the pullback
is the standard one, shows that
$\varphi_{\mathrm{model}} / \de u$
raises the weight by $2K+2N+1$. Since $\de z / \de u \sim u^{-2K-1}$,
it follows that $\varphi_{\mathrm{model}} / \de z$
raises the weight by $2N$.
Recalling from \eqref{eq:pullback-weight}
that the downstairs weights differ from the
upstairs weights by a factor $2K$, we get the desired
\eqref{eq:nu-shift}.
\end{proof}

\begin{lem} \label{lem:nu-values}
Suppose $(\mathcal{E}, \varphi) \in \mathcal{\cC}_{K,N}$.
Then $\nu_\infty$ on sections of $\cE$ attains $K$ distinct values
mod $1$, differing by multiples of $\frac{1}{K}$.
\end{lem}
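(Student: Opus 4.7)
The strategy is to bound the number of distinct values of $\nu_\infty$ modulo $1$ from above and from below by $K$, and then to identify their residues.

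First, I would establish the upper bound. By Definition \ref{defn:filtered}, the filtration $\cP_\bullet \cE$ in a neighborhood of $\infty$ is periodic in the sense that $\cP_{\alpha-1}\cE = x \cP_\alpha \cE$ (with $x = z^{-1}$), so it is determined by its restriction to $\alpha \in [0,1)$. The associated graded object $\bigoplus_{\alpha \in [0,1)} \cP_\alpha \cE / \cP_{<\alpha} \cE$ is a vector space of total dimension equal to the rank $K$, so the filtration jumps at no more than $K$ distinct values of $\alpha$ in $[0,1)$. By the definition of $\nu_\infty$, the set of values attained by $\nu_\infty$ on nonzero local sections coincides (up to sign) with the set of jump values of the filtration, hence has at most $K$ distinct residues mod $1$.

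Next, I would establish the lower bound by exhibiting $K$ sections whose weights are distinct mod $1$. Pick any nonzero local section $s$ of $\cE$ near $\infty$ and set $\alpha_0 = \nu_\infty(s) \in \R$. Applying Lemma \ref{lem:nu-phi} iteratively gives
\begin{equation}
\nu_\infty\!\left(\bar\varphi^{\,j}(s)\right) = \alpha_0 + \frac{jN}{K}, \qquad j = 0, 1, \dots, K-1.
\end{equation}
Since each of these is a finite real number, each $\bar\varphi^{\,j}(s)$ is a nonzero local section. Because $\gcd(K,N) = 1$, the residues $\{jN \bmod K : j = 0, \dots, K-1\}$ exhaust $\{0, 1, \dots, K-1\}$; hence the $K$ values $\alpha_0 + jN/K$ are distinct modulo $1$, and differ from one another by multiples of $1/K$. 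Combining with the upper bound shows that $\nu_\infty$ attains exactly $K$ distinct values mod $1$, which are precisely $\{\alpha_0 + k/K : k = 0,\dots,K-1\}$ as required.

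The only delicate point is the upper bound: one must make sure the identification between jumps of the filtration and residues attained by $\nu_\infty$ is being used correctly, and in particular that local sections in a punctured neighborhood of $\infty$ suffice to realize each jump. This is immediate from the definition of $\nu_\infty$, which takes the infimum over $\alpha$ with $s \in \cP_\alpha \cE$, so I expect no real obstacle; the lower bound and the description of the differences follow directly from Lemma \ref{lem:nu-phi} and coprimality of $K$ and $N$.
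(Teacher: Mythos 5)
Your proof is correct and its core is essentially the paper's own one-line argument: apply Lemma \ref{lem:nu-phi} iteratively to a single nonvanishing section $s$ near $z=\infty$, so that $\nu_\infty(\bar\varphi^{\,j}(s)) = \nu_\infty(s) + \frac{jN}{K}$ for $j=0,\dots,K-1$, and use $\gcd(K,N)=1$ to conclude these are $K$ distinct values mod $1$ differing by multiples of $\frac{1}{K}$. Your additional upper-bound step via the rank-$K$ associated graded of the filtration is sound (though the parenthetical ``up to sign'' is unnecessary, since the values of $\nu_\infty$ are precisely the jump values) and merely makes explicit what the paper leaves implicit, as exhibiting the $K$ values already suffices for the statement.
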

\begin{proof}
Consider any nonvanishing section $s$ of $\cE$ near $z = \infty$.
Using \eqref{eq:nu-shift}, we see that
$s, \bar \varphi s, \bar \varphi^2 s, \dots, \bar\varphi^{K-1} s$
all have different values of $\nu_\infty$ mod $1$, differing
by multiples of $\frac{1}{K}$.
\end{proof}

\begin{defn}\label{defn:cyclic}\hfill
\begin{itemize}
 \item An
\textbf{ordered $K$-partition of $N$} is a $K$-tuple
$\mathbf{b} = (b_1, \dots, b_K) \in \mathbb \Z_{\ge 0}^K$ with $\sum_{i=1}^K b_i = N$.
\item A
\textbf{cyclic $K$-partition of $N$} is an equivalence class $[\mathbf{b}]$
of ordered $K$-partitions of $N$, where
$\mathbf{b}$ and $\mathbf{b'}$ are equivalent if they differ by
a cyclic permutation of the index set $\{1, \dots, K\}$.
\end{itemize}
\end{defn}
For example,
\begin{itemize}
\item If $K=2$ and $N=3$, there are $2$ cyclic $K$-partitions of $N$:
$[(0,3)]$ and $[(1,2)]$.
\item If $K=3$ and $N=4$, there are $5$ cyclic $K$-partitions of $N$:
$[(1,1,2)]$, $[(0,1,3)]$, $[(0,3,1)]$, $[(0,2,2)]$, and $[(0,0,4)]$.
\end{itemize}

\begin{prop}\label{prop:shape}
Suppose $(\cE, \varphi) \in \mathcal{\cC}_{K,N}$.
There is a decomposition of filtered bundles
\begin{equation} \label{eq:filtered-decomp}
	\cE = \bigoplus_{i=1}^K \cL_i
\end{equation}
where each $\cL_i \simeq \cO(\alpha_i)$.
Moreover, if we define $\bfb$ by
\begin{equation}\label{eq:btoc-1}
 b_i - \frac{N}{K} = \alpha_{i}-\alpha_{i+1},
\end{equation}
then $[\mathbf{b}]$ is a cyclic $K$-partition of $N$,
canonically determined by the bundle $(\cE, \varphi)$.

\end{prop}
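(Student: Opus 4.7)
\medskip

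\noindent\textbf{Proof plan for Proposition \ref{prop:shape}.}
The plan is to get the decomposition \eqref{eq:filtered-decomp} for free from Lemma \ref{lem:parabolic-grothendieck}, then use the Higgs field $\bar\varphi$ together with the order-shift formula \eqref{eq:nu-shift} to force an ordering of the summands that makes $\bfb$ into a cyclic $K$-partition of $N$.

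First I would apply Lemma \ref{lem:parabolic-grothendieck} to obtain $\cE = \bigoplus_{i=1}^K \cL_i$ with $\cL_i \simeq \cO(\alpha_i)$ and $\sum \alpha_i = 0$. The multiset of weights $\{\alpha_i \bmod 1\}$ is forced by Lemma \ref{lem:nu-values} to consist of $K$ distinct residues differing by multiples of $\frac{1}{K}$; since $\gcd(K,N)=1$, the map $\alpha \mapsto \alpha - \frac{N}{K} \bmod 1$ cyclically permutes these residues with a single orbit. This allows me to relabel the $\cL_i$ (uniquely up to cyclic permutation) so that $\alpha_{i+1} \equiv \alpha_i - \frac{N}{K} \pmod 1$ for all $i$ (indices mod $K$). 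With this ordering, each $b_i := \frac{N}{K} + \alpha_i - \alpha_{i+1}$ is automatically an integer, and the telescoping sum gives $\sum_i b_i = N$.

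The core step is showing $b_i \ge 0$. For this I would take $s_i$ the canonical trivializing section of $\cL_i \simeq \cO(\alpha_i)$, which is regular away from $\infty$ with $\nu_\infty(s_i)=-\alpha_i$. By Lemma \ref{lem:nu-phi}, $\nu_\infty(\bar\varphi s_i) = -\alpha_i + \frac{N}{K}$. Decompose $\bar\varphi s_i = \sum_j \pi_j(\bar\varphi s_i)$ using the projections $\pi_j : \cE \to \cL_j$. Each $\pi_j(\bar\varphi s_i)$ is a section of $\cL_j$ regular away from $\infty$, so it has the form $g_{ij}(z)\, s_j$ for a polynomial $g_{ij}$, with $\nu_\infty(\pi_j(\bar\varphi s_i)) = -\alpha_j + \deg g_{ij}$ (or $-\infty$). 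Since these values all lie in distinct residue classes mod $1$, and the direct-sum filtration gives $\nu_\infty(\bar\varphi s_i) = \max_j \nu_\infty(\pi_j(\bar\varphi s_i))$, the maximum $-\alpha_i + \frac{N}{K}$ must be realized in the unique residue class matching it, namely that of $-\alpha_{i+1}$. Hence $\pi_{i+1}(\bar\varphi s_i) \ne 0$ and $\deg g_{i,i+1} = \frac{N}{K} + \alpha_i - \alpha_{i+1} = b_i$, which is a nonnegative integer as required.

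Finally, for canonicity of $[\bfb]$: the unordered multiset of filtered summand isomorphism classes in \eqref{eq:filtered-decomp} depends only on $\cE$ (it is the usual ``type'' invariant of a Grothendieck-style decomposition, and an elementary semicontinuity argument, or uniqueness of the filtration by minimum-order subsheaves, makes this rigorous). The cyclic ordering imposed above is then intrinsic, depending only on $(K,N)$ and the multiset of residues, so $\bfb$ is determined by $(\cE,\varphi)$ up to cyclic permutation of its indices. I expect the main technical obstacle to be making the ``different residues mod $1$'' argument for $b_i \ge 0$ watertight --- in particular confirming that the decomposition filtration is exactly the direct sum of the filtrations on the summands so that $\nu_\infty$ of a sum equals the max of $\nu_\infty$ on the summands --- but this follows immediately from the definition of a direct sum of filtered bundles.
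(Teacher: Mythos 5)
Your proposal follows the paper's proof almost step for step (Lemma \ref{lem:parabolic-grothendieck} for the splitting; Lemma \ref{lem:nu-values} plus coprimality of $K$ and $N$ to get a single-orbit cyclic ordering of the residues; Lemma \ref{lem:nu-phi} plus the no-cancellation property of the direct-sum filtration to locate the dominant matrix entry of $\bar\varphi$ and conclude $b_i \ge 0$; telescoping for $\sum_i b_i = N$; uniqueness of the ordering up to cyclic shift for canonicity). However, there is a concrete sign slip in your relabeling, and it makes two of your displayed claims false as stated. You order the summands so that $\alpha_{i+1} \equiv \alpha_i - \frac{N}{K} \pmod 1$. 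Under that convention, $b_i := \frac{N}{K} + \alpha_i - \alpha_{i+1} \equiv \frac{2N}{K} \pmod 1$, which is an integer only when $K \mid 2N$, i.e.\ only for $K \le 2$; so your claim that $b_i$ is ``automatically an integer'' fails for all $K \ge 3$. The same slip resurfaces in your degree computation: from $\nu_\infty\bigl(g_{i,i+1}\, s_{i+1}\bigr) = -\alpha_{i+1} + \deg g_{i,i+1}$ set equal to $\nu_\infty(\bar\varphi s_i) = -\alpha_i + \frac{N}{K}$, one gets $\deg g_{i,i+1} = \frac{N}{K} + \alpha_{i+1} - \alpha_i$, the \emph{opposite} sign from what you wrote. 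That quantity is indeed a nonnegative integer under your ordering, but it satisfies $\deg g_{i,i+1} - \frac{N}{K} = \alpha_{i+1} - \alpha_i$, which is \eqref{eq:btoc-1} with the sign reversed: as written, your argument produces the reversed partition $(b_K,\dots,b_1)$ rather than $\bfb$. This is not cosmetic, since \eqref{eq:btoc-1} with its stated sign feeds into the rest of the paper ($\bfalpha = -B\bfb$ in \eqref{eq:btocviaB}, the fixed-point classification, the $\cW$-algebra dictionary), and the paper notes that reversal of $\bfb$ corresponds to the \emph{dual} Higgs bundle, so $[\bfb]$ and its reversal are genuinely different labels in general.

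The repair is one line: relabel in the other cyclic direction, $\alpha_{i+1} \equiv \alpha_i + \frac{N}{K} \pmod 1$, which is exactly the paper's convention \eqref{eq:ordering}. The residue-matching argument then places the dominant component of $\bar\varphi s_{i+1}$ in $\cL_i$ (consistent with the superdiagonal shape of the fixed-point Higgs fields \eqref{eq:b-higgs-bundle}), and $b_i := \deg g_{i+1,i}$ in your notation satisfies $-\alpha_{i+1} + \frac{N}{K} = -\alpha_i + b_i$, which is \eqref{eq:btoc-1} with the correct sign; everything else in your argument goes through verbatim. One small positive: your explicit remark that the multiset of summand types $\{\alpha_i\}$ is an invariant of the filtered bundle $\cE$ is a point the paper leaves implicit in its canonicity step, and spelling it out is worthwhile.
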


\begin{proof}
The existence of a decomposition of the form \eqref{eq:filtered-decomp}
follows from Lemma \ref{lem:parabolic-grothendieck}.
Moreover, using Lemma \ref{lem:nu-values},
we may assume, without loss of generality,
that the weights are ordered so that
\begin{equation}\label{eq:ordering}
-\frac{N}{K} = \alpha_{i} - \alpha_{i+1} \pmod  1.
\end{equation}
Each $\mathcal{O}(\alpha_j)$ comes with a canonical
trivialization away from $z=\infty$, by a section $e_j$
such that $\nu_\infty(e_j)=-\alpha_j$.
Expanding in this basis, $\bar\varphi(e_j)= \sum_{i=1}^K\bar{\varphi}_{ij} e_i$ for $\bar\varphi_{ij}$
holomorphic functions in $z$.
The $\nu_\infty(e_i)$ (and similarly, $\nu_\infty(\bar{\varphi}_{ij} e_i)$) are all distinct mod 1; hence, there is no cancellation and
\begin{equation}\label{eq:max}
 \nu_\infty(\bar\varphi(e_j)) = \max_{i}  \left( \nu_\infty(e_{i}) + \mathrm{deg}_z(\bar \varphi_{ij}) \right).
\end{equation}
The maximum must occur at an index
$i$ such that $\nu_\infty( \bar \varphi(e_j)) = \nu_\infty(e_i) \pmod 1$.
By Lemma \ref{lem:nu-phi},
$\nu_\infty(\bar\varphi(e_j)) = \nu_\infty(e_j) + \frac{N}{K}$; using this and
\eqref{eq:ordering} it follows that the maximum in \eqref{eq:max} is attained at the index $i=j-1$.
Define $b_{i}:=\mathrm{deg}_z(\bar \varphi_{i,i+1})$, and note that $b_i \ge 0$ since
$\bar \varphi_{i,i+1}$
is holomorphic.
The equation
\begin{equation}
 \nu_\infty(e_{i+1}) + \frac{N}{K} = \nu_\infty(\bar \varphi(e_{i+1}))=\nu_\infty(e_i) + \deg_z \bar \varphi_{i,i+1}
\end{equation}
gives
\begin{equation}\label{eq:btoc-reordered}
 -\alpha_{i+1} + \frac{N}{K} = -\alpha_{i} + b_i,
\end{equation}
proving \eqref{eq:btoc-1}.
Summing \eqref{eq:btoc-reordered} over $i$, we see that $\sum b_i=N$,
hence $\mathbf{b}$ is a $K$-partition of $N$.
The parabolic degrees $\alpha_i$ are determined up to cyclic
permutation by \eqref{eq:ordering}; consequently the cyclic
partition $[\mathbf{b}]$ is well-defined.
\end{proof}

Proposition \ref{prop:shape} gives a decomposition of $\cM_{K,N}$,
\begin{equation} \label{eq:stratification}
 	\cM_{K,N} = \bigsqcup_{[\bfb]} \cM_{K,N}^{[\bfb]},
 \end{equation}
analogous to the Bialynicki-Birula stratification for the usual
moduli spaces $\cM_K(C)$.
\ti{A priori} some of the strata could be empty, but we will
rule this out in Proposition \ref{prop:classification-fixed-points} below,
by explicitly exhibiting Higgs bundles in all strata.

\section{The \texorpdfstring{$\C^\times$}{C}-action and its fixed points}\label{sec:facts}

Recall from \cite{hitchin87} that on the moduli space $\cM_{K}(C)$ of $SL(K)$-Higgs bundles
without singularities on a compact curve $C$,
there is a $\C^\times$-action which rescales the
Higgs field:
\begin{equation} \label{eq:simple-cstar-action}
  (\cE, \varphi) \mapsto (\cE, \e^t \varphi), \qquad t \in \C / 2 \pi \I \Z.
\end{equation}
In this section we study an analogous $\C^\times$-action on $\cM_{K,N}$.

\subsection{The \texorpdfstring{$\C^\times$}{C}-action on \texorpdfstring{$\cM_{K,N}$}{M(K,N)}}

For $\cM_{K,N}$ the simple formula \eqref{eq:simple-cstar-action} will not work. One quick way to see that
it will not work is to note that the property
$\charp \varphi \in \cB_{K,N}$ is not preserved by
rescaling $\varphi$. Indeed, this condition is roughly
\begin{equation}
  \charp \varphi = \lambda^K - z^N \de z^K + \text{lower order terms},
\end{equation}
and this condition is not invariant under rescaling $\varphi \mapsto \e^t \varphi$: rather,
\begin{equation}
  \charp \e^t \varphi = \lambda^K - \e^{K t} z^N \de z^K + \text{lower order terms}.
\end{equation}
Luckily, there is a simple modification of the $\C^\times$-action which
\ti{does} preserve this condition: we need to combine $\varphi \mapsto \e^t \varphi$ with a compensating action on the base $\CP^1$,
\begin{equation}
  \rho_t(z) = \e^{-\frac{K t}{K+N}} z.
\end{equation}
Thus instead of \eqref{eq:simple-cstar-action} we consider:
\begin{equation} \label{eq:new-cstar-action}
  (\cE, \varphi) \mapsto (\rho_t^* \cE, \e^t \rho_t^* \varphi), \qquad t \in \C / 2 \pi \I (K+N) \Z.
\end{equation}

\begin{figure}[h]
\begin{centering}
\includegraphics[width=1.2in]{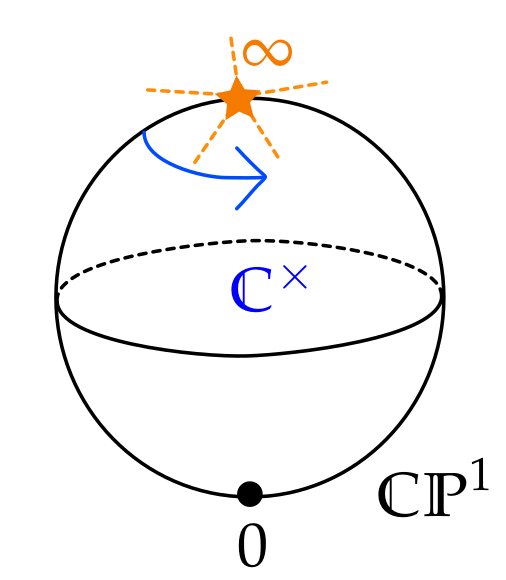}
\caption{\label{fig:CP1}
The $\C^\times$-action on $\CP^1$ fixes the marked point at $z = \infty$
as well as the point $z = 0$.
We indicate $K+N$ ``Stokes rays'' around $z = \infty$,
determined by the singularity data $\fa_i$ there, given
in \eqref{eq:singular-type}. The singularity data are
not preserved by the rotation of $\CP^1$ alone, but are
preserved by the combined action \eqref{eq:new-cstar-action}.}
\end{centering}
\end{figure}

\begin{prop} The action \eqref{eq:new-cstar-action} maps $\cC_{K,N} \to \cC_{K,N}$.
\end{prop}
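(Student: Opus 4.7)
The plan is to verify directly that each defining condition of $\cC_{K,N}$ (Definition \ref{defn:category}) is preserved under the map $(\cE,\varphi) \mapsto (\rho_t^*\cE, \e^t\rho_t^*\varphi)$. First, $\rho_t$ is a biholomorphism of $\CP^1$ fixing $\infty$, so pullback takes filtered $\cO_{\CP^1}(\ast\{\infty\})$-modules to filtered $\cO_{\CP^1}(\ast\{\infty\})$-modules; since $\rho_t$ is a local biholomorphism at $\infty$, it preserves $\nu_\infty$ on sections. In particular, $\rho_t^*\omega$ trivializes $\wedge^K\rho_t^*\cE$ away from $\infty$ with $\nu_\infty(\rho_t^*\omega)=0$, so $\rho_t^*\cE$ is a filtered $SL(K)$-bundle. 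Tracelessness and meromorphy of $\e^t\rho_t^*\varphi$ are immediate.

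The main work is to verify the good filtered singularity condition. I would lift $\rho_t$ to the ramified cover: since $x=z^{-1}$ satisfies $\rho_t^*x = \e^{Kt/(K+N)}x$ and $x=\psi(u)=u^{2K}$, the natural lift is
\begin{equation}
\tilde\rho_t \colon \tU \to \tU, \qquad u \mapsto \e^{t/(2(K+N))}\,u,
\end{equation}
so that $\psi\circ\tilde\rho_t = \rho_t\circ\psi$ for a consistent choice of branch. Consequently $\psi^*(\rho_t^*\cE) = \tilde\rho_t^*(\psi^*\cE)$ as filtered bundles on $\tU$, and likewise for Higgs fields. Because $\tilde\rho_t$ is a biholomorphism of $\tU$ fixing $u=0$, pulling back preserves the decomposition \eqref{eq:filtered-decomp-1} into line bundles and the property that each $\cL_i$ carries the standard filtration by pole order at $u=0$.

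The critical calculation is that the singular type \eqref{eq:singular-type} is preserved on the nose. Using the explicit model \eqref{eq:localmodelphi},
\begin{equation}
\tilde\rho_t^*\!\left(\frac{\de u}{u^{2K+2N+1}}\right)
= \frac{\e^{t/(2(K+N))}\,\de u}{\e^{(2K+2N+1)t/(2(K+N))}\,u^{2K+2N+1}}
= \e^{-t}\,\frac{\de u}{u^{2K+2N+1}},
\end{equation}
so $\e^t\,\tilde\rho_t^*\varphi_{\mathrm{model}}=\varphi_{\mathrm{model}}$, exactly matching the prescribed $\fa_r$. Applying the same pullback to the holomorphic correction in \eqref{eq:gauge} yields another holomorphic-in-$u$ term (since $\tilde\rho_t$ fixes $u=0$ and rescales holomorphically), and the trivialization $g$ is replaced by $g\circ\tilde\rho_t^{-1}$, which is still a trivialization of the standard form. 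Thus $(\rho_t^*\cE,\e^t\rho_t^*\varphi)$ satisfies \eqref{eq:gauge} and lies in $\cC_{K,N}$.

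The only real obstacle is bookkeeping: the exponent $\frac{Kt}{K+N}$ in the definition of $\rho_t$ and the modulus $2\pi\I(K+N)\Z$ on $t$ are precisely what makes the arithmetic $\e^{t/(2(K+N))}\cdot\e^{-(2K+2N+1)t/(2(K+N))}=\e^{-t}$ work out so that the $\e^t$ rescaling exactly cancels the scaling of $\varphi_{\mathrm{model}}$. Once this exponent match is confirmed, all the remaining checks reduce to the general fact that pullback by a biholomorphism of a pointed curve preserves filtered structures, decompositions, and orders at the marked point.
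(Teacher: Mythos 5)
Your proof is correct and follows essentially the same route as the paper's: lift $\rho_t$ to the ramified cover as $u \mapsto \e^{t/(2(K+N))}u$, verify that $\e^t$ times the pullback of $\varphi_{\mathrm{model}}$ equals $\varphi_{\mathrm{model}}$, and pull back the gauge relation \eqref{eq:gauge} to conclude. You simply spell out a few routine checks (the exponent arithmetic, preservation of the filtered $SL(K)$-structure and of the standard filtration at $u=0$) that the paper leaves implicit.
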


\begin{proof}
The action of $\rho_t$ naturally lifts
to the ramified cover $\tU$, where it maps
\begin{equation}
  \rho_t(u) = \e^{\frac{t}{2(K+N)}} u,
\end{equation}
and this lifted action obeys
\begin{equation} \label{eq:lifted-model-action}
 \e^t \rho_t^* \tilde{\varphi}_\mathrm{model} = \varphi_\mathrm{model}.
\end{equation}
Suppose $(\cE, \varphi) \in \cC_{K,N}$; then for some $g$,
 \begin{equation} \label{eq:lift1}
  g\left( \psi^* \varphi \right) g^{-1} - \varphi_{\mathrm{model}}
 \end{equation}
 is holomorphic in $u$. To see that $(\rho_t^* \cE, \e^t \rho_t^* \varphi) \in \cC_{K,N}$, just
 apply $\rho_t^*$ to everything in \eqref{eq:lift1} to conclude that
 \begin{equation} \label{eq:lift2}
g_t \left(\psi_t^* (\e^t \rho_t^*\varphi) \right) g_t^{-1} - \e^t \rho_t^* \varphi_{\mathrm{model}}
 \end{equation}
 is also holomorphic in $u$. Then using \eqref{eq:lifted-model-action} we get
 $(\rho_t^* \cE, \e^t \rho_t^* \varphi) \in \cC_{K,N}$ as desired.
\end{proof}

\subsection{The central fiber}

The action \eqref{eq:new-cstar-action} descends to the moduli
space $\cM_{K,N}$. Moreover, it projects to an action on $\cB_{K,N}$,
which transforms
\begin{equation}
  P_r(z) \mapsto \e^{\frac{rN}{K+N} t} P_r(\e^{- \frac{K}{K+N} t} z).
\end{equation}
The only fixed point is $P_2 = \cdots = P_K = 0$.
Consequently, all fixed points of the $\C^\times$-action on $\cM_{K,N}$ lie in the fiber
over this point. This fiber in $\cM_{K,N}$ plays a role analogous to that of the global
nilpotent cone in $\cM_{K}(C)$; we call it the \ti{central fiber}.

\subsection{Fixed points of the \texorpdfstring{$\C^\times$}{C}-action} \label{sec:fixedpoints}

One of the key technical devices in the study of
$\cM_{K}(C)$ is an
analysis of the fixed locus of the $\C^\times$-action,
$F \subset \cM_{K}(C)$. In general $F$ is rather
complicated: it has various components of various dimensions.
One of the major obstacles to understanding the topology of $\cM_{K}(C)$
is the complicated nature of $F$. (For $K=2$ it
was already described in \cite{hitchin87}, for $K=3$ in \cite{Gothen94}, for $K=4$
in \cite{GPHS}. For $K>4$, $F$ is so complicated that other techniques
are better \cite{GPGM05}.)

For the $\cM_{K,N}$ the situation is
much simpler, as
we now show: the fixed locus $F \subset \cM_{K,N}$ is a finite set,
and more precisely,
there is a single fixed point in each of the strata
$\cM_{K,N}^{[\bfb]}$ of \eqref{eq:stratification}.

\begin{prop} Let $\bfb$ be an ordered $K$-partition of $N$, and let
\begin{equation} \label{eq:b-higgs-bundle}
  \cE_\bfb = \bigoplus_{i=1}^K \cO(\alpha_i), \qquad \varphi_\bfb = \begin{pmatrix}
               0 & z^{b_1} & &\\ & &  \ddots & \\
                & & & z^{b_{K-1}}\\
                z^{b_K} & & &
              \end{pmatrix} \de z,
\end{equation}
where the $\alpha_i$ and $b_i$ are related by
\begin{equation}\label{eq:btoc}
 b_i - \frac{N}{K} = \alpha_{i}-\alpha_{i+1}, \qquad \sum_{i=1}^K \alpha_i = 0.
\end{equation}
Then $(\cE_\bfb, \varphi_\bfb) \in \cC_{K,N}$. Moreover, if we define
$g_t: \cE_{\bfb} \to \rho_t^* \cE_{\bfb}$ by
\begin{equation} \label{eq:gzeta-b-fixed}
  g_t(e_i) = \e^{- t \frac{K}{K+N} \alpha_i} \rho_t^* e_i,
\end{equation}
then we have
\begin{equation} \label{eq:b-fixed}
  \e^t \rho_t^* \varphi_\bfb = g_t \varphi_\bfb g_t^{-1}.
\end{equation}
In particular, $[(\cE_\bfb, \varphi_\bfb)] \in \cM_{K,N}$ is a $\C^\times$-fixed point,
depending only on $[\bfb]$.
\end{prop}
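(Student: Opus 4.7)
The plan is to verify the three assertions in order: first that $(\cE_\bfb, \varphi_\bfb) \in \cC_{K,N}$, then the intertwining identity \eqref{eq:b-fixed}, and finally that $g_t$ is an isomorphism in $\cC_{K,N}$, from which the fixed-point conclusion and the dependence only on $[\bfb]$ will follow.

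For the first step, note that $\cE_\bfb = \bigoplus_{i=1}^K \cO(\alpha_i)$ is a filtered $SL(K)$-bundle since $\sum \alpha_i = 0$. The Higgs field $\varphi_\bfb$ is a cyclic ``companion-style'' matrix whose characteristic polynomial is $\lambda^K - z^{b_1 + \cdots + b_K} \de z^K = \lambda^K - z^N \de z^K$, so the eigenvalues are $\lambda_r = \e^{2\pi \I r/K} z^{N/K} \de z$, matching the target singular behavior. To verify the good filtered structure near $z = \infty$, I will exhibit a trivialization on the ramified cover $\tU$ (with $u^{2K} = z^{-1}$) that diagonalizes the leading term of $\psi^* \varphi_\bfb$ and identifies it with $\varphi_{\mathrm{model}}$ from \eqref{eq:localmodelphi}. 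The diagonalizing matrix takes the form $g = \hat g \cdot D$, where $\hat g_{jk} = (\e^{2\pi\I/K})^{jk}$ diagonalizes the cyclic shift structure and $D$ is a diagonal rescaling by appropriate powers of $u$ chosen simultaneously to identify the singular type $\fa_r$ in \eqref{eq:singular-type} and to match the standard filtration. This is a mild variant of the computation already carried out for the Hitchin section in the proof of Proposition \ref{prop:hitchinbase}, differing only in that the superdiagonal entries now carry the non-uniform powers $z^{b_i}$.

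For the second step, I will apply both sides of \eqref{eq:b-fixed} to $\rho_t^* e_{i+1}$ and compare. Using $\rho_t^* z = \e^{-Kt/(K+N)} z$, the left-hand side equals $\e^t \rho_t^*(z^{b_i} \de z) \cdot \rho_t^* e_i = \e^{t(N - K b_i)/(K+N)} z^{b_i} \de z \cdot \rho_t^* e_i$. The right-hand side, after successively applying $g_t^{-1}$, $\varphi_\bfb$, and $g_t$, produces the same expression up to a scalar $\e^{-tK(\alpha_i - \alpha_{i+1})/(K+N)}$. The two match precisely when $\alpha_i - \alpha_{i+1} = b_i - N/K$, which is \eqref{eq:btoc}. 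For the cyclic $(K,1)$-entry one needs $\alpha_K - \alpha_1 = b_K - N/K$, which I will obtain by summing \eqref{eq:btoc} for $i = 1, \dots, K-1$ together with $\sum b_i = N$.

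Finally, since $g_t$ is diagonal in the decomposition $\cE_\bfb = \bigoplus \cO(\alpha_i)$ and $\rho_t$ fixes $\infty$ acting as multiplication by a nonzero constant on $z$, the pullback preserves pole orders at $\infty$, so $\rho_t^* \cO(\alpha_i) \simeq \cO(\alpha_i)$ as filtered line bundles and $g_t$ is an isomorphism of filtered $SL(K)$-bundles. Combined with \eqref{eq:b-fixed}, this makes $g_t$ a morphism in $\cC_{K,N}$ between $(\cE_\bfb, \varphi_\bfb)$ and $(\rho_t^* \cE_\bfb, \e^t \rho_t^* \varphi_\bfb)$, so $[(\cE_\bfb, \varphi_\bfb)]$ is $\C^\times$-fixed; a cyclic permutation of $\bfb$ corresponds to cyclically relabeling the summands $\cL_i$, giving an isomorphic filtered Higgs bundle, so the fixed point depends only on $[\bfb]$. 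The main obstacle is the explicit construction of the diagonalizing gauge transformation on the ramified cover in Step 1 and the verification that the resulting filtration agrees with the standard one; the intertwining identity in Step 2 is essentially bookkeeping once the exponents have been aligned via \eqref{eq:btoc}.
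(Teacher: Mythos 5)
Your proposal is correct and takes essentially the same route as the paper: membership in $\cC_{K,N}$ is checked via the gauge transformation $g = \hat g \cdot \diag(u^{2K\alpha_i})$ on the ramified cover (exactly the mild variant of the Hitchin-section computation you describe, which conjugates $\psi^*\varphi_\bfb$ to $\phimodel$ on the nose, with the filtration matching because $\nu_\infty(g\psi^* e_i) = -2K\alpha_i = 2K\,\nu_\infty(e_i)$), and \eqref{eq:b-fixed} is verified by the same scalar comparison $\e^{t(N-Kb_i)/(K+N)} = \e^{-tK(\alpha_i-\alpha_{i+1})/(K+N)}$ via \eqref{eq:btoc}. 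Your explicit handling of the cyclic $(K,1)$-entry and of why the class depends only on $[\bfb]$ slightly exceeds the paper's level of detail but introduces nothing new in method.
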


\begin{proof} First we check that
$(\cE_\bfb, \varphi_\bfb) \in \cC_{K,N}$.
Define $g: \cO^{\oplus K} \to \psi^* \cE_\bfb$ by
\begin{equation}
	g = \hat{g} ({\mathrm {diag}} (u^{2K \alpha_i})_{i=1}^K) , \qquad
     \hat{g}_{jk} = \left( \e^{\frac{2 \pi \I}{K}} \right)^{jk}.
\end{equation}
Then we have on the nose
\begin{equation}
	g \psi^* \varphi_\bfb g^{-1} = \phimodel.
\end{equation}
Moreover, using the trivial filtration on $\cO^{\oplus K}$ we have
$\nu_\infty(g \psi^* e_i) = -2K \alpha_i$,
while using the filtration on $\cE$ we have
$\nu_\infty(e_i) = - \alpha_i$. Thus the pullback filtration
on $\psi^* \cE$ indeed matches with the standard filtration
on $\cO^{\oplus K}$. This shows that $(\cE_\bfb,
\varphi_\bfb) \in \cC_{K,N}$.

Then we check \eqref{eq:b-fixed} by direct calculation:
in $\e^t \rho_t^* \varphi_\bfb$ the monomial $z^{b_i}$
is multiplied by the factor $\e^{t \left(1-\frac{K(b_i+1)}{K+N}\right)}$, and in $g_t \varphi_\bfb g_t^{-1}$
it is multiplied by
$\e^{-t (\alpha_i - \alpha_{i+1}) \frac{K}{K+N}}$. These two match, using \eqref{eq:btoc}.
\end{proof}

\begin{rem}
The conditions \eqref{eq:btoc} are equivalent to
\begin{equation} \label{eq:btocviaB}
\boldsymbol{\alpha}= -B \mathbf{b},
\end{equation}
where the entries of $B$ are
\begin{equation}\label{eq:B}
 B_{ij}=\frac{1}{2K} \left( -(K-1)+2 \left( \;(j-i)(\mathrm{mod}\; K)\; \right) \right).
\end{equation}
For example,
\begin{equation}
  B^{K=2} = \frac14 \begin{pmatrix} -1 & 1 \\ 1 & -1 \end{pmatrix}, \qquad   B^{K=3} = \frac16 \begin{pmatrix} -2 & 0 & 2 \\ 2 & -2 & 0 \\ 0 & 2 & -2 \end{pmatrix}.
\end{equation}

\end{rem}

\begin{rem}
There is an involution $r$ on $K$-partitions of $N$
defined by $r((b_1, \dots, b_K)) = (b_K, \dots, b_1)$.
This involution corresponds to taking duals:
\begin{equation}
	(\cE_{r(\bfb)}, \varphi_{r(\bfb)}) = (\cE_\bfb^*, \varphi_\bfb^T).
\end{equation}

\end{rem}

So far we have found a collection of $\C^\times$-fixed points
in $\cM_{K,N}$
labeled by cyclic $K$-partitions of $N$. Next we show that
these are all the fixed points:

\begin{prop} \label{prop:classification-fixed-points} The map
\begin{equation}
  [\mathbf{b}] \mapsto [(\cE_\bfb, \varphi_\bfb)]
\end{equation}
gives a bijection
\begin{equation}
 \left\{\mbox{cyclic $K$-partitions of $N$} \right\} \rightarrow \left\{ \C^\times\text{-fixed points in } \cM_{K,N} \right\}.
\end{equation}
\end{prop}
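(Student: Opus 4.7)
The proof has two parts. Injectivity follows immediately from Proposition~\ref{prop:shape}: the cyclic partition $[\bfb]$ is canonically associated to any $[(\cE,\varphi)] \in \cM_{K,N}$, so distinct $[\bfb]$'s land in distinct strata $\cM_{K,N}^{[\bfb]}$ of \eqref{eq:stratification}. For surjectivity, let $[(\cE,\varphi)] \in \cM_{K,N}^{[\bfb]}$ be a $\C^\times$-fixed point; the goal is to show it coincides with $[(\cE_\bfb, \varphi_\bfb)]$. By Lemma~\ref{lem:parabolic-grothendieck} and Proposition~\ref{prop:shape}, I pick a representative with $\cE = \cE_\bfb = \bigoplus_i \cO(\alpha_i)$ in the ordering of \eqref{eq:ordering}, and expand $\bar\varphi(e_j) = \sum_i \bar\varphi_{ij}\, e_i$ in the canonical basis; Lemma~\ref{lem:nu-phi} gives $\deg \bar\varphi_{ij} \le \alpha_i - \alpha_j + N/K$.

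The fixed-point hypothesis supplies, for each $t$, an automorphism $g_t \in \Aut(\cE_\bfb)$ of filtered $SL$-bundles with $\e^t \rho_t^* \varphi = g_t\, \varphi\, g_t^{-1}$. The plan is to reduce to the case $g_t = g_t^{\mathrm{canonical}}$ of \eqref{eq:gzeta-b-fixed} after replacing $\varphi$ by a suitable $\Aut(\cE_\bfb)$-conjugate. Once this reduction is achieved, the equation $\e^t \rho_t^* \varphi = g_t^{\mathrm{canonical}}\, \varphi\, (g_t^{\mathrm{canonical}})^{-1}$, read off monomial by monomial, forces the coefficient of $z^k$ in $\bar\varphi_{ij}$ to vanish unless
$$ k = \tfrac{N}{K} + (\alpha_i - \alpha_j). $$
Lemma~\ref{lem:nu-values} gives $\alpha_j - \alpha_i \equiv (j-i)N/K \pmod 1$; combined with $\gcd(K,N)=1$, this last equation admits an integer solution only when $j \equiv i+1 \pmod K$, and then $k = b_i$. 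So the only surviving entries are $\bar\varphi_{i,\, i+1 \bmod K} = c_i\, z^{b_i}$ for some scalars $c_i$. The singular-type condition in Definition~\ref{defn:category} (equivalently, that $\charp \varphi$ has leading term $\lambda^K - z^N \de z^K$) forces $\prod_i c_i = 1$, so every $c_i \neq 0$; the remaining diagonal filtered $SL$-automorphisms of $\cE_\bfb$ then rescale $c_i \mapsto 1$, yielding $\varphi = \varphi_\bfb$.

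The main obstacle is the reduction $g_t \to g_t^{\mathrm{canonical}}$. Composing the fixed-point equations at parameters $s$ and $t$ produces the twisted cocycle relation $g_{s+t} = (\rho_t^* g_s)\, g_t$ modulo the stabilizer of $\varphi$ in $\Aut(\cE_\bfb)$, which is finite by the stability assertion of Remark~\ref{rem:irreducible}. Thus $t \mapsto g_t$ is an algebraic $1$-cocycle for the $\C^\times$-action on $\Aut(\cE_\bfb)$ induced by $\rho^*$, and $g_t^{\mathrm{canonical}}$ is another such cocycle (in fact a homomorphism, its entries being $z$-independent scalars fixed by $\rho_t^*$). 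Since $\Aut(\cE_\bfb)$ is a connected affine algebraic group and $\C^\times$ is reductive, the ratio cocycle $g_t (g_t^{\mathrm{canonical}})^{-1}$ is a coboundary; concretely, there is $\gamma \in \Aut(\cE_\bfb)$ whose conjugation brings $\varphi$ into a form where the witness gauge is exactly $g_t^{\mathrm{canonical}}$. The weight computation of the previous paragraph then completes the proof.
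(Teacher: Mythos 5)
Your outline is close in spirit to the paper's proof, but the pivotal step --- ``the ratio cocycle $g_t (g_t^{\mathrm{canonical}})^{-1}$ is a coboundary'' --- is false as stated, and it is exactly where the mathematical content lives. The group $G = \Aut(\cE_\bfb)$ is solvable: $G = S \ltimes U$ with $S$ the diagonal torus of constant scalars on each $\cO(\alpha_i)$ and $U$ unipotent, and $\rho_t^*$ acts trivially on $S$. If two cocycles are cohomologous, $h_t = (\rho_t^* \gamma)\, g_t\, \gamma^{-1}$, then projecting to $G/U \simeq S$ shows their $S$-valued parts are \emph{equal}. Hence diagonal cocycles $t \mapsto \diag\bigl(\e^{-t\gamma_i/(K+N)}\bigr)$ with distinct weight vectors are pairwise non-cohomologous: the nonabelian $H^1$ of $\C^\times$ with coefficients in $G$ is indexed by the weight cocharacter, not trivial, and connectedness of $G$ plus reductivity of $\C^\times$ do not help. (Also, a ratio of two cocycles valued in a nonabelian group is not itself a cocycle, so the assertion does not even parse cohomologically.) Declaring $g_t$ cohomologous to the canonical cocycle \eqref{eq:gzeta-b-fixed} is therefore precisely assuming that its diagonal weights are $\frac{K}{K+N}\alpha_i$ --- which is not known a priori, can only be extracted from $\varphi$, and is in effect the conclusion you are trying to prove. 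The argument is circular at its load-bearing joint.

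What reductivity does legitimately give --- and what the paper proves, by letting the $g_t$ act on the affine space $L(\cE) \simeq \bigoplus_{i \neq j} \Hom(\cL_i, \cL_j)$ of filtered line decompositions and invoking a fixed point (Euler characteristic $1$), after using Remark \ref{rem:irreducible} to fix the cocycle relation up to $K$-th roots of unity as you correctly noted --- is only that $g_t$ can be conjugated into the torus $S$, with \emph{unknown} weights $\gamma_i \in \frac{1}{K}\Z$. The fixed-point equation then forces $\varphi_{ij} = c_{ij}\, z^{(\gamma_i - \gamma_j + N)/K}\, \de z$, and $\det \varphi \neq 0$ together with $\gcd(K,N)=1$ yields a unique cyclic permutation, hence superdiagonal monomials $z^{b_i}$ after reordering and rescaling. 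The step your proposal omits entirely, and which cannot be skipped, is the paper's use of Lemma \ref{lem:nu-phi}: applying $\nu_\infty(\bar\varphi\, e_{i+1}) = \nu_\infty(e_{i+1}) + \frac{N}{K}$ to $\bar\varphi\, e_{i+1} = z^{b_i} e_i$ gives \eqref{eq:btoc}, $b_i - \frac{N}{K} = \alpha_i - \alpha_{i+1}$, which is what ties the exponents to the partition of the stratum (only \emph{after} this do the weights turn out to be the canonical ones), and which also yields $\sum b_i = N$, with $b_i \ge 0$ following from regularity of $\varphi$ at $z=0$. Your monomial-by-monomial computation smuggled in this conclusion through the unproved reduction. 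The remaining pieces of your proposal --- injectivity via Proposition \ref{prop:shape}, $\prod_i c_i \neq 0$ from the characteristic polynomial, and rescaling $c_i \to 1$ by diagonal automorphisms --- are fine and match the paper.
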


\begin{proof}
Suppose $[(\cE, \varphi)]\in \cM_{K,N}$
is a $\C^\times$-fixed point.
What we need to show is that $(\cE, \varphi)$
is equivalent to some $(\cE_\bfb, \varphi_\bfb)$,
with $\bfb$ unique up to cyclic permutation.

The fixed-point property means that for each $t \in \C$
there is an isomorphism of filtered $SL(K)$-bundles
\begin{equation}
 g_t: \cE \rightarrow \rho_t^* \cE
\end{equation}
such that
\begin{equation} \label{eq:gzeta-higgs}
\e^t \rho_t^* \varphi = g_{t} \varphi g_{t}^{-1}.
\end{equation}
The first key fact we need is that $g_t$ is diagonalizable,
in the sense that there exists a decomposition of $\cE$ into
filtered line bundles $\cL_i$ such that $g_t$ maps
$\cL_i \to \rho_t ^* \cL_i$.

This is slightly trickier than it sounds: we do have the decomposition
 \eqref{eq:filtered-decomp}
of $\cE$ provided by Proposition \ref{prop:shape}, but that decomposition
is not unique.
Thus we must consider the space $L(\cE)$
of all filtered line decompositions of
$\cE$. $L(\cE)$
is a flag manifold $G / S$, where $G$ is the group of endomorphisms of $\cE$,
and $S$ is the
stabilizer of the decomposition \eqref{eq:filtered-decomp}.
$G$ is an upper-triangular group, since there
are no holomorphic maps $\cL_i \to \cL_j$ when $\pdeg \cL_i > \pdeg \cL_j$,
and $S$ is its diagonal subgroup. It follows
that $L(\cE) \simeq \bigoplus_{i \neq j} \Hom(\cL_i, \cL_j)$,
and in particular $L(\cE)$ is an affine space.

For any $t$ we also have $L(\cE) = L(\rho_t^* \cE)$
canonically.
Thus the operators $g_t$ act on $L(\cE)$.
Moreover we have $g_t g_{t'} = g_{t + t'}$
acting on $L(\cE)$ (to see this, note that $g_{t+t'}^{-1} \circ
\rho_t^* g_t' \circ g_t$ is an automorphism of the Higgs bundle
$(\cE,\varphi)$, but this Higgs bundle is irreducible, so its only automorphisms
are scalar multiplications by $K$-th roots of unity.)
Thus the $g_t$ induce an action of $\C^\times$ on the affine space
$L(\cE)$. Such an action necessarily has a fixed point since $L(\cE)$ has
Euler characteristic $1$.
This gives the desired line decomposition of $\cE$.

After modifying the $g_t$ by $K$-th roots of unity we can arrange
$g_t g_{t'} = g_{t+t'}$ on the nose, and $g_{2 \pi \I (K+N)}$ is an
automorphism of $(\cE, \varphi)$, thus a $K$-th root of unity.
We trivialize each $\cL_i$ by a section $e_i$ over $\CP^1 - \infty$,
and trivialize $\rho^* \cL_i$ by $\rho^* e_i$.
Then $g_t$ is represented by a diagonal matrix:
\begin{equation}
  g_t = \diag((\e^{- t \frac{1}{K+N} \gamma_i})_{i=1}^K),
\end{equation}
for some $\gamma_i \in \frac{1}{K} \Z$.

Using \eqref{eq:gzeta-higgs},
$\e^t \rho^*_t \varphi_{ij} = (g_t)_{ii} (g_t)^{-1}_{jj}
\varphi_{ij}$, which determines $\varphi_{ij}$ to be a monomial in $z$:
\begin{equation}
\varphi_{ij} = c_{ij} z^{\beta_{ij}} \de z, \qquad \beta_{ij} = \frac{\gamma_i - \gamma_j + N}{K}.
\end{equation}
Thus $\beta_{ij} \in \Z$ only
if $\gamma_j - \gamma_i = N \pmod K$; for other $(i,j)$ we must have
$\varphi_{ij} = 0$. Since $\det \varphi \neq 0$ there must exist
at least one permutation $\sigma \in S_K$ for which $\gamma_{\sigma(i)} - \gamma_i = N \pmod K$. Since $(N,K) = 1$ there is at most one such
permutation. Thus, by reordering the $\cL_i$ we may arrange that
$\gamma_{i+1} - \gamma_i = N \pmod K$, and this ordering is unique up
to a cyclic permutation. It follows that the only nonzero entries of $\varphi$
are the $\varphi_{i,i+1}$.
Let $b_i = \beta_{i,i+1}$.
By scalar rescalings of the
$e_i$ we may arrange that $\varphi_{i,i+1} = z^{b_i} \de z$.

Next we prove the relation \eqref{eq:btoc}.
For this we use the fact in Lemma \ref{lem:nu-phi},
\begin{equation}
  \nu_\infty(\bar\varphi (e_{i+1})) = \nu_\infty(e_{i+1}) + \frac{N}{K},
\end{equation}
and
\begin{equation}
  \bar\varphi (e_{i+1}) = z^{b_i} e_i.
\end{equation}
Combining these we have
\begin{equation}
  b_i - \alpha_{i} = - \alpha_{i+1} + \frac{N}{K}
\end{equation}
which gives the desired \eqref{eq:btoc}.

Finally, we need to show that
${\mathbf b} = (b_1, \dots, b_K)$ is an ordered $K$-partition
of $N$.
We have $b_i \geq 0$ because $\varphi$ has no singularity at $z = 0$.
\eqref{eq:btoc} implies easily that $\sum_{i=1}^K b_i = N$.
\end{proof}

\subsection{Harmonic bundles at fixed points} \label{sec:harmonicbundles}

In this section we consider the \ti{harmonic metrics} associated to the $\C^\times$-fixed points
in $\cM_{K,N}$. We will see that they can be described in an explicit fashion.
The key fact which makes this possible is that for these Higgs bundles
the Hitchin
equation \eqref{eq:hitchin} reduces to an ODE in the radial coordinate.

We begin with some preliminaries about harmonic metrics on Higgs bundles
with singularities \cite{simpsonnoncompact}.

\begin{defn}
A hermitian metric $h$ on a filtered bundle $\cE$ over $(C,D)$
is \emph{adapted to the filtration}
if, in a local holomorphic coordinate $x$ centered at $p \in D$,
\begin{equation}
\cP_\alpha \cE = \left\{ s:  \|s\|_h =O\left(|x|^{-(\alpha + \eps)} \right) \mbox{for all $\eps>0$} \right\}
\end{equation}
for all $\alpha \in \R$.
\end{defn}

\begin{defn}
Given a good filtered Higgs bundle $(\cE, \varphi)$ over $(C,D)$
a \ti{harmonic metric} on $(\cE, \varphi)$ is a Hermitian metric on $\cE$,
adapted to the filtration,
such that
\begin{equation} \label{eq:hitchin}
 F_{D(\cE, h)} + \left[\varphi, \varphi^{\dagger_h}\right]=0,
\end{equation}
where $D(\cE, h)$ is the Chern connection
and $\varphi^{\dagger_h}$ is the hermitian adjoint of $\varphi$
with respect to $h$.
If $h$ is a harmonic metric we say $(\cE, \varphi, h)$ is a
\ti{harmonic bundle}.
\end{defn}

The key analytic fact is the following existence theorem:

\begin{thm} \label{thm:nab-hodge}
Given a good filtered Higgs bundle $(\cE, \varphi)$
over $(C,D)$
there is a harmonic metric $h$ on $(\cE, \varphi)$,
unique up to scalar multiple.
\end{thm}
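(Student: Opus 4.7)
The plan is to deduce this theorem from existing nonabelian Hodge correspondence results for wild harmonic bundles on curves, rather than to reprove it from scratch. The basic template is Simpson's existence theorem in \cite{simpsonnoncompact} for tame harmonic bundles on noncompact curves (i.e. the case where $\varphi$ has only regular singularities), and its extension to the irregular setting. In the unramifiedly good case this extension is due to Biquard--Boalch \cite{biquardboalch}, and in the fully general ``good'' case (which is what we need here, since our singular type $\fa_r$ lives on a $2K$-fold ramified cover) it is due to Mochizuki, see \cite{MochizukiToda}.

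First I would spell out the abstract hypotheses of the cited theorem and check them one by one for a $(\cE,\varphi)\in \cC_{K,N}$. The input is a good filtered $SL(K)$-Higgs bundle on a compact curve, here $(\CP^1,\{\infty\})$, which is parabolic stable of parabolic degree $0$. Parabolic degree zero is built into the definition of filtered $SL(K)$-bundle via the condition $\nu_\infty(\omega)=0$. Stability is the content of Remark \ref{rem:irreducible}: the characteristic polynomial of $\varphi$ lies in $\cB_{K,N}$ and is irreducible over $\C[z]$, so $(\cE,\varphi)$ has no proper $\varphi$-invariant filtered subbundle at all, and in particular no destabilizing one. Goodness is built into our definition of $\cC_{K,N}$ via the explicit local model \eqref{eq:gauge}--\eqref{eq:localmodelphi}.

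Next I would recall the local model construction: near $\infty$, after pulling back along $\psi:\tU\to U$, the Higgs field becomes a perturbation of the diagonal form $\phimodel$, and for the diagonal model $\phimodel$ there is an explicit ``flat'' diagonal harmonic metric $h_\mathrm{model}$ adapted to the standard filtration on $\cO_{\tU}^{\oplus K}$. This provides the local model harmonic metric required as input to Mochizuki's existence theorem, and its compatibility with the Galois action of $\psi$ ensures that it descends to a model harmonic metric on $\cE$ on a punctured neighborhood of $\infty$. One then invokes the existence half of \cite{MochizukiToda}: there exists a hermitian metric $h$ on $\cE|_{\CP^1\setminus\{\infty\}}$ which solves Hitchin's equation \eqref{eq:hitchin}, is adapted to the filtration in the sense above, and differs from $h_\mathrm{model}$ by a bounded automorphism near $\infty$.

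For uniqueness I would use the standard Corlette--Simpson--Mochizuki argument: if $h_1$ and $h_2$ are two such metrics, then $h_2 = h_1\cdot s$ for a positive self-adjoint endomorphism $s$ of $\cE$, and the difference of the two Hitchin equations together with the adaptation condition forces $\log s$ to be a harmonic (and bounded, via the model comparison) endomorphism that commutes with $\varphi$. By irreducibility of $(\cE,\varphi)$ (Remark \ref{rem:irreducible}) any such endomorphism is a scalar, giving uniqueness up to overall positive scalar. The main obstacle is really bookkeeping: one must verify that our specific ramified local model \eqref{eq:localmodelphi} with singular type \eqref{eq:singular-type} falls under Mochizuki's framework of ``good wild'' harmonic bundles, and that the parabolic weights are generic enough that no boundary issues arise; both checks are routine but notationally heavy, which is why we prefer to quote the general theorem.
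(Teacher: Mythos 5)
Your proposal takes essentially the same route as the paper: the paper offers no independent argument, remarking only that the result is essentially proven in \cite{biquardboalch} in the unramifiedly good case and that the general (ramified) good case is Theorem 2.7 of \cite{MochizukiToda} --- precisely the references you quote, with your hypothesis checks and uniqueness sketch filling in the same standard material. One small caution: your stability verification via Remark \ref{rem:irreducible} is specific to $(\cE,\varphi)\in\cC_{K,N}$, whereas the theorem as stated concerns arbitrary good filtered Higgs bundles over $(C,D)$, for which (poly)stability is a genuine hypothesis of the cited theorems rather than something automatic.
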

Theorem \ref{thm:nab-hodge} is essentially proven in \cite{biquardboalch},
though one detail is missing: strictly speaking, that reference
treats only the \ti{unramifiedly good} case rather than simply
\ti{good}. The general theorem is stated as
Theorem 2.7 of \cite{MochizukiToda}.

Now we would like to understand what additional structure we get
if we consider Higgs bundles corresponding to $\C^\times$-fixed points
in $\cM_{K,N}$.
The first step is to show that the construction of harmonic metrics is
covariant for the action of $S^1 \subset \C^\times$:

\begin{prop} \label{prop:harmonic-s1-covariant}
Suppose $(\cE, \varphi, h)$ is a harmonic bundle and $t \in \I \R / 2 \pi \I (K+N)\Z$.
Then
$(\rho_{t}^* \cE, \e^t \rho_t^* \varphi, \rho_t^* h)$ is also a harmonic bundle.
\end{prop}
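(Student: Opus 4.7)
The plan is to verify the three defining properties of a harmonic bundle for the transformed data, namely that (i) $(\rho_t^*\cE, e^t \rho_t^*\varphi)$ lies in $\cC_{K,N}$, (ii) $\rho_t^* h$ is adapted to the pulled-back filtration, and (iii) $\rho_t^* h$ satisfies the Hitchin equation for the rescaled Higgs field. The first item was already established when we showed the $\C^\times$-action preserves $\cC_{K,N}$, so it remains to check (ii) and (iii). The crucial hypothesis is that $t$ is purely imaginary, so $\rho_t$ is an \emph{isometry} of the standard radial coordinate near $z=\infty$.

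For adaptedness, I would work in the local coordinate $x = z^{-1}$ near $\infty$. Since $\rho_t^* x = e^{Kt/(K+N)} x$ and $t\in\I\R$, we have $|\rho_t^* x| = |x|$. Thus for any section $s$ of $\cE$,
\begin{equation}
\|\rho_t^* s\|_{\rho_t^* h} = \rho_t^*\,\|s\|_h = O(|x|^{-(\alpha+\eps)}) \iff \|s\|_h = O(|x|^{-(\alpha+\eps)}),
\end{equation}
and on the filtration side the pulled-back filtration $\rho_t^*\cP_\alpha\cE$ is by definition the one making $\rho_t$ filtered. This gives adaptedness of $\rho_t^*h$.

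For the Hitchin equation, I would simply pull back the equation satisfied by $h$. Naturality of the Chern connection gives $F_{D(\rho_t^*\cE,\rho_t^*h)} = \rho_t^* F_{D(\cE,h)}$, and naturality of the Hermitian adjoint gives $(\rho_t^*\varphi)^{\dagger_{\rho_t^* h}} = \rho_t^*(\varphi^{\dagger_h})$. Therefore
\begin{equation}
F_{D(\rho_t^*\cE,\rho_t^*h)} + [\,e^t\rho_t^*\varphi,\; (e^t\rho_t^*\varphi)^{\dagger_{\rho_t^* h}}\,] = \rho_t^*\!\left(F_{D(\cE,h)} + |e^t|^2\,[\varphi,\varphi^{\dagger_h}]\right),
\end{equation}
and since $t\in\I\R$ implies $|e^t|^2 = 1$, the right-hand side vanishes by the Hitchin equation for $h$.

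The only mild subtlety — and the one place to be careful — is the interaction of the $e^t$ rescaling with the Hermitian adjoint: $(e^t\varphi)^{\dagger_h} = \overline{e^t}\,\varphi^{\dagger_h}$, and it is the product $e^t\overline{e^t}=|e^t|^2$ that must equal $1$. This is exactly why the statement is restricted to the compact subgroup $t\in\I\R/2\pi\I(K+N)\Z$ rather than all of $\C^\times$; the full complex action will not in general preserve harmonicity, which is consistent with the standard picture for $\cM_K(C)$.
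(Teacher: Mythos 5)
Your proof is correct and takes essentially the same approach as the paper, whose entire argument is the observation that the Hitchin equation is invariant under this action because $[\varphi, \varphi^{\dagger_h}]$ is rescaled by $\e^t \e^{\bar t} = 1$ precisely when $t \in \I\R$ --- exactly your key step. Your verifications of adaptedness of $\rho_t^* h$ and of the naturality of the Chern connection and Hermitian adjoint under the holomorphic isometry $\rho_t$ merely fill in details the paper dismisses as ``a straightforward computation.''
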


\begin{proof} This is a straightforward computation: the point is that the equation
\eqref{eq:hitchin} is invariant under this action, because the
term $[\varphi, \varphi^{\dagger_h}]$ is rescaled by $\e^{t} \e^{\bar{t}} = 1$.
(Note that this would not have been true for more general $t \in \C / 2 \pi \I (K+N)\Z$.)
\end{proof}

In particular, if $[(\cE, \varphi)]$ is fixed under $\C^\times$, then it is fixed
under $S^1 \subset \C^\times$, and then Proposition \ref{prop:harmonic-s1-covariant}
imposes a constraint on $h$:

\begin{prop}\label{prop:harmonic}
 Let $\mathbf{b}$ be an ordered $K$-partition of $N$.
 The harmonic metric $h_\bfb$ on the  good filtered
 Higgs bundle $(\cE_\bfb, \varphi_\bfb)$ of \eqref{eq:b-higgs-bundle} is
 \begin{align}
  h_\bfb &= \begin{pmatrix}\label{eq:hexpression}
     |z|^{-2 \alpha_1} \e^{u_1} & & \\
      & \ddots & \\
      & & |z|^{-2 \alpha_K} \e^{u_K}
    \end{pmatrix},
 \end{align}
where $u_i(z)=u_i(|z|)$, $\alpha_i$ are related to $b_i$ by \eqref{eq:btoc},
and the functions $u_i:\R^{>0} \rightarrow \R$ solve
 \begin{equation}\label{eq:ODE}
\frac{1}{4} \left(\frac{\de^2}{\de |z|^2}+ \frac{1}{|z|} \frac{\de}{\de|z|} \right) u_i =
|z|^{\frac{2N}{K}} \left(\e^{u_i-u_{i+1}} - \e^{u_{i-1}-u_i} \right)
\end{equation}
with the following boundary conditions:
\begin{itemize}
\item The function $u_i$ decays to $0$  as $\abs{z} \rightarrow \infty$.
\item Near $0$, $u_i \sim 2 \alpha_i\log{\abs{z}}$.
\end{itemize}
Additionally,
\begin{align}\label{eq:bc}
 \lim_{|z| \rightarrow 0} |z|^{\frac{2(K+N)}{K}} (\e^{u_i-u_{i+1}} -1) &=0, &
\underset{|z| \rightarrow 0}{\lim} |z| \frac{\de u_i}{\de|z|} &=2 \alpha_i, \\
 \lim_{|z| \rightarrow \infty} |z|^{\frac{2(K+N)}{K}} \sum_{i=1}^K(\e^{u_i-u_{i+1}} -1) &=0, &
 \underset{|z| \rightarrow \infty}{\lim} |z| \frac{\de u_i}{\de |z|} &=0. \label{eq:bc-2}
\end{align}
\end{prop}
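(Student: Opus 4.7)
The plan is to exploit the $S^1$-equivariance of the fixed point together with the uniqueness of the harmonic metric (Theorem~\ref{thm:nab-hodge}) to force $h_\bfb$ into the diagonal form~(\ref{eq:hexpression}); then (\ref{eq:hitchin}) reduces by a direct computation to the ODE system~(\ref{eq:ODE}); and the boundary conditions come from smoothness at $z=0$ and adaptedness at $z=\infty$.

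First I would apply Theorem~\ref{thm:nab-hodge} to produce a harmonic metric $h_\bfb$, unique up to positive scalar; normalizing so that $\det h_\bfb = 1$ relative to the trivialization of $\det \cE_\bfb$ by $\omega$ removes this ambiguity. For each purely imaginary $t = \I s$, Proposition~\ref{prop:harmonic-s1-covariant} says that $\rho_t^* h_\bfb$ is harmonic on $(\rho_t^* \cE_\bfb, \e^t \rho_t^* \varphi_\bfb)$; pulling back via the explicit isomorphism $g_t$ of~(\ref{eq:gzeta-b-fixed})--(\ref{eq:b-fixed})---which preserves $\omega$ since $\sum_i \alpha_i = 0$---yields another unit-determinant harmonic metric on $(\cE_\bfb, \varphi_\bfb)$. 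Uniqueness then gives $g_t^* \rho_t^* h_\bfb = h_\bfb$. In the basis $e_i$ of trivializing sections of the $\cO(\alpha_i)$'s this becomes
\[ (h_\bfb)_{ij}(\rho_t z) = \e^{\I s K (\alpha_i - \alpha_j)/(K+N)} (h_\bfb)_{ij}(z). \]
Specializing to $s = 2\pi(K+N)/K$ makes $\rho_t z = z$ while multiplying the right-hand side by $\e^{2\pi \I(\alpha_i - \alpha_j)}$. By~(\ref{eq:btoc}) together with $\gcd(K,N) = 1$, we have $\alpha_i - \alpha_j \notin \Z$ for $i \neq j$, so this forces the off-diagonal entries of $h_\bfb$ to vanish; the diagonal entries are then invariant under all rotations $\rho_t$ and so depend only on $|z|$, yielding~(\ref{eq:hexpression}) after writing $(h_\bfb)_{ii} = |z|^{-2\alpha_i} \e^{u_i(|z|)}$.

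Next I would plug this ansatz into~(\ref{eq:hitchin}). Away from $z = 0$, $\log|z|^{-2\alpha_i}$ is pluriharmonic, so the $(ii)$-entry of the Chern curvature reduces to $-\pa\delbar u_i = -\tfrac{1}{4}(\pa_r^2 + \tfrac{1}{r}\pa_r) u_i \, \de z \wedge \de \bar z$. The $(ii)$-entry of $[\varphi_\bfb, \varphi_\bfb^{\dagger_h}]$ is
\[ \left( |z|^{2 b_i} \frac{h_{\bfb,ii}}{h_{\bfb,i+1,i+1}} - |z|^{2 b_{i-1}} \frac{h_{\bfb,i-1,i-1}}{h_{\bfb,ii}} \right) \de z \wedge \de \bar z; \]
using $b_i - \tfrac{N}{K} = \alpha_i - \alpha_{i+1}$, the exponents of $|z|$ collapse to $\tfrac{2N}{K}$, and (\ref{eq:ODE}) drops out.

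Finally I would verify the boundary conditions. Near $z = 0$, smoothness of $h_\bfb$ on the holomorphic bundle $(\cE_\bfb, \varphi_\bfb)$ forces $u_i \sim 2\alpha_i \log|z|$, giving~(\ref{eq:bc}). Near $z = \infty$, adaptedness of $h_\bfb$ to the filtration, combined with the fact that in the gauge~(\ref{eq:gauge}) the Higgs field is modelled by $\phimodel$ with the standard filtration on $\cO^{\oplus K}_{\tU}$, forces $h_\bfb$ to asymptote to the flat Hermitian metric in that gauge, hence $u_i \to 0$. The sharper decay rates in~(\ref{eq:bc-2}) would follow from the refined asymptotic analysis of harmonic bundles near good singularities in~\cite{biquardboalch, MochizukiToda}. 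I expect this last point to be the main technical obstacle: adaptedness alone only gives $|z|^{-\eps}$-slack, and promoting this to the precise decay in~(\ref{eq:bc-2}) requires those refined asymptotics.
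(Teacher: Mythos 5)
Your diagonalization step, ODE reduction, and boundary analysis at $z=0$ match the paper's proof essentially verbatim: the paper likewise combines the explicit equivalence \eqref{eq:gzeta-b-fixed}--\eqref{eq:b-fixed} with Proposition \ref{prop:harmonic-s1-covariant} and uniqueness to get $\rho_t^* h = (g_t^{-1})^{\dagger} h\, g_t^{-1}$ for $t \in \I\R$, specializes to $t = 2\pi\I(K+N)/K$ to kill the off-diagonal entries (since $\alpha_i - \alpha_j \in \Z$ only for $i=j$), deduces rotational invariance of the diagonal entries, reads off \eqref{eq:ODE} from the $(i,i)$-entry of \eqref{eq:hitchin}, and near $0$ derives $u_i \sim 2\alpha_i\log\abs{z}$ and the limits in \eqref{eq:bc} from smoothness of $h$ together with $\alpha_{i+1}-\alpha_i = \frac{N}{K} - b_i$.

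The genuine gap is at $\abs{z} \to \infty$, and it is larger than you acknowledge. Adaptedness does not ``force $h_\bfb$ to asymptote to the flat Hermitian metric in the model gauge'': adaptedness only controls $\norm{e_i}_h$ up to sub-polynomial slack, i.e. it gives $u_i = o(\log\abs{z})$, and the decay $u_i \to 0$ is precisely the decoupling statement you are trying to establish --- so as written this step is circular. You apply the honest caveat only to the sharper limits \eqref{eq:bc-2}, but it already applies to $u_i \to 0$ itself. The paper closes this gap self-containedly: from adaptedness it extracts only boundedness of the $u_i$, and then invokes Lemma \ref{lem:u2}, a maximum-principle argument for the radial Toda system \eqref{eq:toda}. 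There, the identity \eqref{eq:85} shows the radial Laplacian of $\norm{\mathbf{u}}^2$ is nonnegative, so $\norm{\mathbf{u}}^2$ has no interior maximum; boundedness then forces $\norm{\mathbf{u}}^2 \to 0$, and a comparison with the modified Bessel function $K_0$ (using the positivity of the first nonzero eigenvalue of $Q^T Q$ on the relevant subspace) upgrades this to exponential decay. That exponential decay is exactly what is needed for \eqref{eq:bc-2}: the polynomial weight $\abs{z}^{2(K+N)/K}$ is killed because $\sum_i(\e^{u_i - u_{i+1}}-1) = O(\norm{\mathbf{u}}^2)$ (the linear terms telescope to zero), and it also yields $\abs{z}\,\frac{\de u_i}{\de\abs{z}} \to 0$. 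If you prefer your citation-based route, you must invoke the refined norm estimates of \cite{MochizukiToda} for the decoupling with an explicit rate at the irregular singularity, rather than adaptedness; otherwise an argument in the spirit of Lemma \ref{lem:u2} is required.
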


\begin{proof}
Recall that the good filtered Higgs bundle $(\cE_\bfb, \varphi_\bfb)$ is fixed
up to equivalence by the $\C^\times$-action, with the specific equivalence
given in \eqref{eq:b-fixed}, \eqref{eq:gzeta-b-fixed}. It follows that for any
$t \in \C$
the harmonic metric on $(\rho_t^* \cE_\bfb, \e^t \rho_t^* \varphi_\bfb)$
is $(g_{t}^{-1})^{\dagger}  h g_{t}^{-1}$.
On the other hand, for $t \in \I \R$ we know from Proposition \ref{prop:harmonic-s1-covariant}
that the harmonic metric on $(\rho_t^* \cE_\bfb, \e^t \rho_t^* \varphi_\bfb)$
is $\rho_t^* h$. Combining these, for $t \in \I \R$ we obtain
\begin{equation} \label{eq:hentries}
\rho_{t}^* h= (g_{t}^{-1})^{\dagger} h g_{t}^{-1}.
\end{equation}
This has several consequences:
\begin{itemize}
\item Taking $t = 2 \pi \I \frac{K+N}{K}$ we get $\rho_t = 1$, so that \eqref{eq:hentries}
becomes a pointwise constraint on $h$,
\begin{equation}
  h_{ij} = \e^{2 \pi \I (\alpha_i - \alpha_j)} h_{ij}.
\end{equation}
Thus $h_{ij} = 0$ if $\alpha_i - \alpha_j \notin \Z$.
From \eqref{eq:btoc},
$\alpha_i - \alpha_j \in \Z$
only if $i=j$. Consequently $h$ is diagonal.
\item For the diagonal components of $h$, \eqref{eq:hentries} reduces to $\rho_t^* h_{ii} = h_{ii}$, which means $h$ is rotationally symmetric:
 $h_{ii}(z)=h_{ii}(|z|)$.
\end{itemize}

We define $u_i(|z|)$ by $h_{ii}(|z|)=|z|^{-2\alpha_i} \e^{u_i(|z|)}$.
Then the $(i,i)$-entry of \eqref{eq:hitchin}
gives the desired equation \eqref{eq:ODE}.

Next we consider the boundary behavior of $u$.
The behavior $u_i(|z|) \sim 2 \alpha_i \log \abs{z}$ near $0$ follows from the smoothness
of the entries of $h$.
Additionally, the limits at $\abs{z} = 0$ in \eqref{eq:bc}
follow.  Because $h$ is smooth across $\abs{z}=0$, $\frac{\de}{\de z} \left( \abs{z}^{-2\alpha_i} \e^{u_i} \right)=0$.
The rightmost limit in \eqref{eq:bc} holds, i.e. $\lim_{\abs{z}=0} \frac{\de u_i}{\de \abs{z}}=2 \alpha_i$.
The leftmost limit in \eqref{eq:bc} essentially comes from plugging
in $u_i \sim 2\alpha_i \log \abs{z}$ and noting that $\alpha_{i+1}-\alpha_i \leq \frac{K+N}{N}$.
This inequality is not sharp; from \eqref{eq:btoc}, the $\alpha_i$ satisfy $\alpha_{i+1}-\alpha_{i} = \frac{N}{K}-b_i$.
The functions $u_i(|z|)$ are bounded as $\abs{z} \to \infty$
because $h$ is adapted to the filtration of $\cE_\bfb$.
The stronger statement that $\lim_{|z| \to \infty} u_i(|z|)=0$,
and the properties in \eqref{eq:bc} at $\infty$,
follow from Lemma \ref{lem:u2} below,
which uses the maximum principle to give stronger
bounds on the norm of $\mathbf{u}(|z|)=(u_1, \dots, u_K)$.
\end{proof}

\begin{rem}\label{rem:changeofvar} By
the change of variables $\rho^2=\frac{2K}{K+N}|z|^{\frac{2(K+N)}{K}}$,
\eqref{eq:ODE} becomes the coupled system of ODE
\begin{equation}\label{eq:toda}
\left(\frac{\de^2}{\de \rho^2}+ \frac{1}{\rho} \frac{\de}{\de\rho} \right) u_i =
\e^{u_i-u_{i+1}} - \e^{u_{i-1}-u_i}.
\end{equation}
This is the radial
version of the coupled system of PDE known as ``2d cyclic affine Toda lattice with opposite sign.''
\end{rem}

\begin{rem} 
Essentially the same harmonic bundles appearing in Proposition \ref{prop:harmonic},
and the corresponding Toda lattice equations,
are also considered by Mochizuki in \cite{MochizukiToda}.
Mochizuki arrives at them in a different way:
rather than considering the set $\cM_{K,N}$,
his starting point is the specific Higgs field
\begin{equation}
 \varphi_{Moc} = \begin{pmatrix}
               0 & z^{-1} & &\\ & &  \ddots & \\
                & & & z^{-1}\\
                z^{K+N-1} & & &
              \end{pmatrix}\de z,
              \end{equation}
on $\C\PP^1$ with marked point at $z=\infty$ (irregular singularity)
and marked point at $z=0$ (regular singularity).
Proposition 3.17 of \cite{MochizukiToda} then gives a bijection
between the (continuous) set of compatible filtered bundle
structures and the set of parabolic weights at $0$.
The $\C^\times$-fixed points we have found in $\cM_{K,N}$ are not
equivalent to one another and have no singularity at $z=0$, but if
we nevertheless allow gauge transformations which are singular at $z=0$,
then all these fixed points become equivalent to $\varphi_{Moc}$.
In the language of \cite{MochizukiToda}
they correspond to some specific
choices of the parabolic weights at $z=0$.
\end{rem}

\begin{lem} \label{lem:u2}
Let $\mathbf{u}(\rho) = \left(u_1(\rho), \dots, u_K(\rho)\right)$ where
$u_i(\rho)$ solve the system \eqref{eq:toda} on $(0, \infty)$.
Then, $\mathbf{u}$ satisfies
\begin{equation}\label{eq:85}
  \frac{1}{2}
\left( \frac{\de^2}{\de \rho^2} + \frac{1}{\rho} \frac{\de}{\de \rho} \right)
\|\mathbf{u}\|^2 = \sum_{i} \left(u_i - u_{i+1} \right) \left(\e^{u_i-u_{i+1}} - 1 \right) +
\left\|\frac{\de\mathbf{u}}{\de \rho}\right\|^2.
\end{equation}
Suppose $u_i(\rho)$ is bounded near $\infty$.
Then $\|\mathbf{u}(\rho)\|^2$ is decreasing and
$\lim_{\rho \to \infty} \|\mathbf{u}\|^2=0$.
Moreover, $\|\mathbf{u}(\rho)\|^2$ exhibits exponential decay at $\infty$.
More precisely, for $\eps>0$, take $R_\eps>0$ such that $\|\mathbf{u}(R_\eps)\| < \eps$.
Then, there is a constant $c>0$ (depending explicitly on $\eps$ and $K$) such that
\begin{equation}\label{eq:85b}
\|\mathbf{u}\|^2 (\rho)  \leq \eps^2 \frac{ K_0(c \rho)}{K_0(c R_\eps)} \qquad \mbox{for $\rho>R_\eps$},
\end{equation}
where $K_0$ is the modified Bessel function of first kind.
\end{lem}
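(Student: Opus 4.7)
The plan has four steps, closely tracking the statement. First, I derive the identity \eqref{eq:85} by applying the radial Laplacian $\Delta := \partial_\rho^2 + \rho^{-1}\partial_\rho$ to $\|\mathbf{u}\|^2 = \sum_i u_i^2$ via the product rule: $\tfrac12 \Delta \|\mathbf{u}\|^2 = \sum_i u_i \Delta u_i + \|\mathbf{u}'\|^2$. Setting $v_i := u_i - u_{i+1}$ and substituting the Toda system \eqref{eq:toda}, a cyclic index shift rewrites $\sum_i u_i (e^{v_i} - e^{v_{i-1}})$ as $\sum_i v_i e^{v_i}$, and using $\sum_i v_i = 0$ (telescoping in $\Z/K$) this equals $\sum_i v_i (e^{v_i} - 1)$, giving \eqref{eq:85}. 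For monotonicity I use the elementary fact $x(e^x - 1) \geq 0$, which makes the right-hand side nonnegative and $f := \|\mathbf{u}\|^2$ subharmonic. Rewriting $\Delta f \geq 0$ as $(\rho f')' \geq 0$ shows $\rho f'$ is nondecreasing; if $\rho_0 f'(\rho_0) > 0$ for some $\rho_0$, then $f'(\rho) \geq \rho_0 f'(\rho_0) / \rho$ on $[\rho_0, \infty)$, forcing logarithmic growth and contradicting boundedness. Thus $f$ is nonincreasing and the limit $L := \lim_{\rho \to \infty} f(\rho) \geq 0$ exists.

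For the exponential bound \eqref{eq:85b}, the key is to upgrade $\Delta f \geq 0$ to $\Delta f \geq c^2 f$ near infinity, and then compare against a modified Bessel function. Two inputs are needed: (i) for $|x| \leq M$ one has $x(e^x - 1) \geq \kappa_M x^2$ with $\kappa_M > 0$ depending only on $M$; and (ii) on the subspace of $\R^K$ orthogonal to the constant direction $(1,\ldots,1)$, the cyclic discrete Laplacian on $\Z/K$ has spectral gap $4 \sin^2(\pi/K)$, so $\sum_i v_i^2 \geq 4 \sin^2(\pi / K)\,\|\mathbf{u}\|^2$. Combining (i), (ii) and \eqref{eq:85} gives $\Delta f \geq c^2 f$ on the half-line $[R_M, \infty) = \{\|\mathbf{u}\| \leq M\}$ (a half-line thanks to monotonicity), with $c^2 = 4 \kappa_M \sin^2(\pi/K)$. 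Taking $M = \epsilon$, the comparison function $W(\rho) := \epsilon^2 K_0(c\rho)/K_0(c R_\epsilon)$ satisfies the linear equation $\Delta W = c^2 W$ with $W(R_\epsilon) = \epsilon^2$ and $W(\infty) = 0$. Applying the one-variable maximum principle for the operator $\Delta - c^2$ to $\Phi := f - W$ on $[R_\epsilon, \infty)$ — an interior positive maximum of $\Phi$ would force $\Delta \Phi \leq 0$ there, contradicting $\Delta \Phi \geq c^2 \Phi > 0$ — yields $f \leq W$ on this half-line, which is \eqref{eq:85b}. Letting $\rho \to \infty$ in this bound gives $L \leq 0$, hence $L = 0$.

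The main obstacle is the trace mode $\bar{u}(\rho) := \tfrac{1}{K}\sum_i u_i$. Summing the Toda equations telescopes to $\Delta \bar{u} = 0$, so $\bar{u} = A + B \log \rho$; boundedness at infinity kills $B$ but not $A$, and the spectral gap in input (ii) only controls $\mathbf{u}$ modulo the constant direction. One genuinely needs $\bar{u} \equiv 0$ to close the argument, since otherwise $\mathbf{u} \equiv (a, \ldots, a)$ would be a bounded non-decaying solution. In the harmonic-bundle context of Proposition \ref{prop:harmonic} this vanishing is automatic: $\bar{u}$ is harmonic in $\rho$ and the boundary behavior $u_i \sim 2 \alpha_i \log \rho$ near $\rho = 0$, combined with the $SL(K)$ condition $\sum_i \alpha_i = 0$, pins both $A$ and $B$ to zero. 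I would include this normalization as part of the standing hypothesis when invoking the lemma, and note it explicitly in the proof so the spectral gap can be applied to $\mathbf{u}$ itself rather than only to its component orthogonal to constants.
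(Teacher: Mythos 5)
Your proposal is correct in substance and follows essentially the same route as the paper's proof: the identity \eqref{eq:85} via the product rule and a cyclic index shift, nonnegativity of $x(\e^x-1)$, the quadratic lower bound $x(\e^x-1)\ge \kappa_M x^2$ on $\abs{x}\le M$, the spectral gap of the cyclic discrete Laplacian (your explicit $4\sin^2(\pi/K)$ is exactly the paper's unnamed constant $C_K$, the first nonzero eigenvalue of $Q^TQ = 2\,\mathrm{Id} - S - S^T$), and comparison with $K_0(c\rho)$ via the maximum principle. Two of your departures are worth recording. First, your isolation of the trace mode is a genuine improvement in precision: as literally stated the lemma admits the bounded, nondecaying solutions $\mathbf{u}\equiv(a,\dots,a)$, and the paper's proof silently inserts the hypothesis $\sum_i u_i = 0$ midway (``If $\sum_i u_i=0$, then \dots''), also using it implicitly when deducing $\norm{\mathbf{u}}\to 0$ from $u_i-u_{i+1}\to 0$. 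Promoting $\sum_i u_i=0$ to a standing hypothesis, as you propose, is the right repair; in the application (Proposition \ref{prop:harmonic}) it comes from unimodularity of the $SL(K)$ harmonic metric --- note that your stated asymptotics $u_i\sim 2\alpha_i\log\abs{z}$ near $0$ kill only the $\log$ coefficient $B$, not the constant $A$, so $\det h = 1$ is the cleaner source. Second, your monotonicity argument via $(\rho f')'\ge 0$ is tidier than the paper's ``no interior maximum'' argument.

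One step as you wrote it needs a patch, because you reverse the paper's order: you derive $\lim_{\rho\to\infty}\norm{\mathbf{u}}^2=0$ \emph{from} the comparison $f\le W$ on $[R_\eps,\infty)$, but the maximum principle you invoke rules out only an \emph{interior} positive maximum of $\Phi=f-W$, and on the non-compact half-line the supremum of $\Phi$ could a priori be approached only as $\rho\to\infty$ --- precisely what happens if $L:=\lim f>0$ --- in which case no interior maximum exists and no contradiction arises. Knowing $\limsup_{\rho\to\infty}\Phi\le 0$ is exactly knowing $L=0$, so the argument is circular as written. The paper avoids this by establishing $\lim\norm{\mathbf{u}}^2=0$ first, so that its comparison \eqref{eq:ineq} (run in the coordinate $s=1/\rho$) has the correct boundary value at $s=0$. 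Your ordering is salvageable by a standard barrier: set $\Phi_\delta=\Phi-\delta I_0(c\rho)$, which tends to $-\infty$, hence attains its maximum, is negative at $\rho=R_\eps$, and satisfies $\Delta\Phi_\delta\ge c^2\Phi_\delta$, so its maximum value is $\le 0$; letting $\delta\to 0$ gives $f\le W$ and then $L=0$ as you conclude. (Even more directly: once $\Delta f\ge c^2 f\ge c^2 L$ on a half-line, integrating $(\rho f')'\ge c^2 L\rho$ forces unbounded growth unless $L=0$.) With this one-line repair, and the explicit hypothesis $\sum_i u_i = 0$, your proof is complete and marginally sharper than the published one.
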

\begin{proof}
\eqref{eq:85} follows from manipulating \eqref{eq:toda}:
\begin{align}\label{eq:96}
\frac{1}{2}\left( \frac{\de^2}{\de \rho^2} + \frac{1}{\rho} \frac{\de}{\de \rho} \right)
\|\mathbf{u}\|^2 &= \sum_i u_i \left( \frac{\de^2}{\de \rho^2} + \frac{1}{\rho} \frac{\de}{\de \rho} \right) u_i
+ \left\|\frac{\de\mathbf{u}}{\de \rho}\right\|^2\\ \nonumber
&= \sum_{i} u_i \left( \e^{u_i-u_{i+1}} - \e^{u_{i-1} - u_i} \right) +
\left\|\frac{\de\mathbf{u}}{\de \rho}\right\|^2\\ \nonumber
&= \sum_{i} \left(u_i -u_{i+1} \right) \left(\e^{u_i-u_{i+1}} -1 \right) +
\left\|\frac{\de\mathbf{u}}{\de \rho}\right\|^2.
\end{align}

The function $\|\mathbf{u}\|^2$ has no maximum in $(0, \infty)$
unless $\|\mathbf{u}\|^2 \equiv 0$,
by the maximum principle. To see this, note that if $\|\mathbf{u}\|^2$ attained
a maximum at $\rho_0 \in (0, \infty)$, then the LHS of \eqref{eq:85}
would be non-positive while the RHS of \eqref{eq:85}
would be non-negative (since the function $f(x)=x(\e^x-1) \geq 0$ with equality at $x=0$).  Consequently, both sides would
vanish and $u_i(\rho_0)=0$, $\frac{\de u_i}{\de\rho}(\rho_0)=0$.
By the uniqueness of solution of the initial value problem (here, it's important that $\rho_0 \neq 0$),
all $u_i$ would vanish on $(0, \infty)$.

Now, suppose $\|\mathbf{u}\|^2$ is bounded at $\infty$.
Because $\|\mathbf{u}\|$ is bounded, non-negative with no maximum on $(0, \infty)$, it follows that
\begin{equation}
\lim_{\rho \to \infty} \left( \frac{\de^2}{\de \rho^2} + \frac{1}{\rho} \frac{\de}{\de \rho} \right)
\|\mathbf{u}\|^2=0.
\end{equation}
Consequently, the RHS of \eqref{eq:85} also converges to zero at $\infty$, and in particular
$\lim_{\rho \to \infty} u_i -u_{i+1} =0$.
Hence, $\lim_{\rho \to \infty} \|\mathbf{u}\|^2=0$. (Note that
as a consequence, $\|\mathbf{u}\|^2$ is decreasing.)

Given $\eps>0$, take $R_\eps>0$ such that $\|\mathbf{u}\|^2(R_\eps)<\eps^2$.
Then for $\rho>R_\eps$,
\begin{equation}\label{eq:85c}
\sum_i (u_i-u_{i+1})(\e^{u_i-u_{i+1}} -1) \geq \sum_i C_\eps (u_i-u_{i+1})^2
\end{equation}
for $C_\eps=\frac{1-\e^{-2\eps}}{2\eps} $.  To explain this choice of $C_\eps$,
take $x=u_i(\rho)-u_{i+1}(\rho)$, and note $|x|=|u_i-u_{i+1}| \leq 2 \|\mathbf{u}\| \leq 2\eps$
for $\rho>R_\eps$.
Then observe that $C_\eps= \sup_{x \in [-2\eps, 2\eps]} \frac{x(\e^x-1)}{x^2}.$

Consequently, by combining \eqref{eq:85} and \eqref{eq:85c}, note that
\begin{align}\label{eq:96b}
\frac{1}{2}\left( \frac{\de^2}{\de \rho^2} + \frac{1}{\rho} \frac{\de}{\de \rho} \right)
\|\mathbf{u}\|^2 &=  \sum_{i} \left(u_i -u_{i+1} \right) \left(\e^{u_i-u_{i+1}} -1 \right) +
\left\|\frac{\de\mathbf{u}}{\de \rho}\right\|^2\\ \nonumber
&\geq  C_\eps \sum_{i} \left(u_i -u_{i+1} \right)^2 \\ \nonumber
&= C_\eps \sum_{i} u_i \left(-u_{i-1} + 2 u_i -u_{i+1} \right) \\ \nonumber
&= C_\eps \mathbf{u}^T  (Q^TQ) \mathbf{u}
\end{align}
where the matrix $Q$ is
 \begin{equation}
Q_{ij}=\begin{cases}
1 \qquad \mbox{if}\; j=i\\
 -1 \quad \mbox{if}\; j-i=1 \pmod{K}\\
 0 \qquad \mbox{otherwise}.
 \end{cases}
 \end{equation}
 If $\sum_{i} u_i=0$,
 then $\mathbf{u}$ lies in the span of the eigenvectors of $Q^TQ$
corresponding to nonzero eigenvalues of $Q^TQ$.
The kernel of the positive semi-definite matrix $Q^TQ$ coincides with the kernel
of $Q$,
 which is one-dimensional since
 the characteristic polynomial of $Q$ is $(x-1)^K - (-1)^K$.
  Because $Q^TQ$ is symmetric, the eigenvectors are mutually orthogonal.
Consequently, if $\sum_i u_i=0$, then
\begin{equation}
\mathbf{u}^T (Q^TQ) \mathbf{u} \geq C_K \|\mathbf{u}\|^2,
\end{equation}
where $C_K>0$ is the first nonzero eigenvalue of $Q^TQ$.
Hence for $\rho>R_\eps$, $\|\mathbf{u}\|^2$ satisfies
\begin{equation}\label{eq:85d}
\frac{1}{2}\left( \frac{\de^2}{\de \rho^2} + \frac{1}{\rho} \frac{\de}{\de \rho} \right)
\|\mathbf{u}\|^2
\geq  C_\eps C_K \|\mathbf{u}\|^2,
\end{equation}
or written alternatively,
\begin{equation}\label{eq:85d2}
\left( -\frac{\de^2}{\de \rho^2}- \frac{1}{\rho} \frac{\de}{\de \rho}  + 2 C_\eps C_K\right)\|\mathbf{u}\|^2 \leq 0.
\end{equation}
This equation is key!
Note that all bounded solutions of
\begin{equation}
\left(-\frac{\de^2}{\de\rho^2} - \frac{1}{\rho} \frac{\de}{\de \rho} + 2 C_\eps C_K \right)f(\rho)=0
\end{equation}
are scalar multiples of the modified Bessel functions of the first kind, $K_0\left(c \rho\right)$, where $c=(2C_\eps C_K)^{-1/2}$.
The function $K_0(t)$ decays to zero
near $t=\infty$ like
\begin{equation}\label{eq:Besseldecay}
K_0\left(t\right) \sim \frac{\e^{-t}}{\sqrt{t}}.
\end{equation}
The inequality
\begin{equation}\label{eq:ineq}
\|\mathbf{u}(\rho)\|^2 \leq \eps^2 \frac{K_0\left( c \rho \right)}{K_0 \left( c R_\eps \right)}
\end{equation}
holds at $\rho=R_\eps$ and $\rho=\infty$.
 By the maximum principle (working in the coordinate $s=\frac{1}{\rho}$ on the finite interval $s \in (0, R_\eps^{-1})$),
the inequality in \eqref{eq:ineq} holds for $\rho>R_\eps$.
\end{proof}

\subsection{Fixed points as modules over \texorpdfstring{$\boldsymbol{R=\C[y,z]/(y^K-z^N)}$}{R = C[y,z]/(y^K-z^N)}} \label{sec:otherinterpretations}

In this section, which is not necessary for the rest of the paper,
we briefly discuss the relation between our results and those of 
Piontkowski in
\cite{Piontkowski}. This relation was proposed to the first author 
by Eugene Gorsky.

All of the $\C^\times$-fixed points in $\cM_{K,N}$
lie in the central fiber, which we expect to be the
compactified Jacobian of the curve $y^K = z^N$.
In turn, this compactified Jacobian has been 
studied in \cite{Piontkowski} 
in the language of rank-1 torsion-free modules over the ring $R=\C[y,z]/(y^K-z^N)$.
Here we spell out a correspondence between good filtered Higgs bundles fixed by the $\C^\times$-action on $\cM_{K,N}$ and rank-1 torsion-free $R$-modules fixed by a certain $\C^\times$-action,
appearing in Proposition 5 of \cite{Piontkowski}.

The ring $R$ can be embedded in its normalization $\C[x]$ by taking $y=x^N$, $z=x^K$, as can any rank-1 torsion-free $R$-module $M$.
Piontkowski gives a stratification 
of the compactified Jacobian by the image of $M \subset \C[x]$ under the map 
\begin{align}
 \nu: \C[x] &\rightarrow \mathbb{N}\\ \nonumber
 f(x) &\mapsto \mathrm{deg}(f).
\end{align}
The image $\nu(M)=:\Delta$ 
satisfies $\Delta + K \subset \Delta$ and $\Delta+ N \subset \Delta$. Without loss of generality, we may choose the embedding of $M$ in $\C[x]$ so that $\Delta$ contains $0$.
Proposition 3 of \cite{Piontkowski} gives a classification of all such $\Delta$.
Fixing some $\Delta$, there is a distinguished module $M_0 \in \nu^{-1}(\Delta)$. The module $M_0$ is generated by $1=x^{a_0}, x^{a_1}, \cdots, x^{a_{K-1}}$
where $a_i \equiv iN \pmod K$. 
The relations among the generators are just the relations in $\C[x]$, i.e.
\begin{equation}
 x^N \cdot x^{a_i} = x^{Kb_i} x^{a_{i+1}}.
\end{equation}
Taking $z=x^K$, we can encode this set of relations as a matrix
\begin{equation}
 x^N \cdot\begin{pmatrix} x^{a_0} \\ x^{a_1} \\ \vdots \\ x^{a_{K-1}} \end{pmatrix} =
 \begin{pmatrix}
  0 & z^{b_1} &  &  & \\
  & 0 & z^{b_2} & & \\
   & & \ddots & \ddots & \\
    & & & 0 & z^{b_{K-1}} \\
    z^{b_K} & & & & 0 
 \end{pmatrix}
\begin{pmatrix} x^{a_0} \\ x^{a_1} \\ \vdots \\ x^{a_{K-1}} \end{pmatrix} .
\end{equation}
Take $\varphi_{\mathbf{b}}/\de z$ to be this matrix,
and compare with the $\C^\times$-fixed point $(\cE_{\mathbf{b}}, \varphi_{\mathbf{b}}$) in  
 \eqref{eq:b-higgs-bundle}.
Note that the $\C^\times$-action on $\C[x]$ given by
\begin{equation}
 \rho_t(x) = \e^{\frac{-t}{K+N}} x, \qquad t \in \C/2\pi \I (K+N) \Z,
\end{equation}
fixes the module $M_0 \subset \C[x]$.

\section{A regulated \texorpdfstring{$L^2$}{L2} norm}\label{sec:mu}

In \cite{hitchin87},
Hitchin considered a real-valued function $\mu$ on the moduli space $\cM_{2}(C)$ of
harmonic $SL(2)$-bundles over a compact Riemann surface $C$.
The function $\mu$ has three main interpretations:
\begin{itemize}
\item $\mu$ is a moment map generating the $S^1$-action on $\cM_{K}(C)$, with
respect to one of its symplectic forms $\omega_I$.
\item $\mu([(\cE, \varphi)])$ is the $L^2$ norm of $\varphi$ in the harmonic metric: $\mu([(\cE, \varphi)]) =
\frac{\I}{\pi} \int_C \mathrm{Tr} \; \varphi \varphi^{\dagger_h}$.
\item If $[(\cE, \varphi)]$ is a $\C^\times$-fixed point, then
$\mu$ can be computed explicitly; there are two cases to consider:
\begin{itemize}
\item $\cE = L \oplus (L^* \otimes \Lambda^2 \cE)$
and
\begin{equation} \label{eq:mu-deg-compact}
	\mu([(\cE, \varphi)]) = \half \left(\mathrm{deg} \;L - \frac{1}{2}\right),
\end{equation}
\item $\cE$ is stable and $\mu([(\cE, \varphi)]) = 0$.
\end{itemize}
\end{itemize}

We expect that there is an analogous function $\mu$ on $\cM_{K,N}$ which
serves as a moment map for the $S^1$ action there.
In this paper we do not give a construction of this function:
rather, we restrict ourselves to some suggestive computations at the $\C^\times$-fixed
points.

For the Higgs bundles $(\cE, \varphi) \in \cC_{K,N}$, the ordinary $L^2$ norm
of $\varphi$ is infinite, because of the divergent
contribution near the irregular singularity at $z = \infty$.
However, there is a natural way of regularizing this infinity: we define
\begin{equation}\label{eq:mu1}
\mu([\cE, \varphi]) = \frac{\I}{\pi} \int \mathrm{Tr} \left( \varphi \wedge \varphi^{\dagger_h} - \Id |z|^{2N/K} \de z \de \zbar \right).
\end{equation}
This regularized $L^2$ norm turns out to be explicitly computable
at the $\C^\times$-fixed points:

\begin{prop}\label{prop:muone}
Given an ordered $K$-partition of $N$ $\mathbf{b}$, let
$(\cE_\bfb, \varphi_\bfb)$ be the associated Higgs bundle from Proposition \ref{prop:classification-fixed-points}.
Then
\begin{equation}\label{eq:mu1value}
\mu([\cE_\bfb, \varphi_\bfb]) = \frac{K}{K+N} \|B \mathbf{b}\|^2
\end{equation}
where $B$ is the matrix defined in \eqref{eq:B}.
\end{prop}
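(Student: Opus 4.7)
The plan is to compute $\mu$ directly from the definition \eqref{eq:mu1}, using the explicit harmonic metric at the fixed point supplied by Proposition \ref{prop:harmonic}, and then to evaluate the resulting radial integral via a Pohozaev-type identity derived from the Toda system.

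I would first plug the diagonal metric $h_\bfb$ and the cyclic Higgs field $\varphi_\bfb$ into the integrand. Only the entries $(\varphi_\bfb)_{i,i+1}$ are nonzero, and a direct matrix calculation, together with the identity $2b_i + 2(\alpha_{i+1}-\alpha_i) = 2N/K$ from \eqref{eq:btoc}, collapses the integrand to
\begin{equation*}
\mathrm{Tr}\!\left(\varphi_\bfb \wedge \varphi_\bfb^{\dagger_h}\right) - \mathrm{Tr}(\mathrm{Id})\abs{z}^{2N/K} \de z \wedge \de\zbar = \abs{z}^{2N/K}\, S\, \de z \wedge \de\zbar,
\end{equation*}
where $S := \sum_i (\e^{u_i-u_{i+1}} - 1)$. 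Passing to polar coordinates and using the rotational invariance $u_i = u_i(\abs{z})$ reduces $\mu$ to the one-dimensional integral $\mu = 4\int_0^\infty r^{1+2N/K}\, S(r)\, \de r$.

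Next, I would derive a Pohozaev-type identity by multiplying the ODE \eqref{eq:ODE} by $r\, \pa_r u_i$ and summing over $i$. The left hand side assembles into the perfect derivative $\tfrac{1}{8r}\pa_r(r^2 \|\pa_r \mathbf{u}\|^2)$, and a telescoping reindexing on the right converts the combination $\sum_i \pa_r u_i\,(\e^{u_i - u_{i+1}} - \e^{u_{i-1}-u_i})$ into $\pa_r S$. Multiplying the resulting identity through by $r$ and rewriting $r^{2N/K+2}\pa_r S$ as $\pa_r(r^{2N/K+2} S) - (2+\tfrac{2N}{K})\,r^{2N/K+1} S$ yields
\begin{equation*}
\tfrac{1}{8}\pa_r\!\bigl(r^2 \|\pa_r \mathbf{u}\|^2\bigr) - \pa_r\!\bigl(r^{2N/K+2} S\bigr) = -\tfrac{2(K+N)}{K}\, r^{2N/K+1} S.
\end{equation*}

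Finally, I would integrate this identity from $0$ to $\infty$, recognizing the right hand side as $-\tfrac{K+N}{2K}\mu$, and then evaluate the boundary terms. At $r \to 0$ the asymptotic $u_i \sim 2\alpha_i \log r$ produces $r^2 \|\pa_r \mathbf{u}\|^2 \to 4\|\bfalpha\|^2$, while the boundary condition \eqref{eq:bc} forces $r^{2N/K+2} S \to 0$. At $r \to \infty$ both boundary quantities must vanish; this is the analytically delicate step, and I would invoke the exponential decay of $\|\mathbf{u}(\rho)\|^2$ from Lemma \ref{lem:u2} (transported back to the $r$ variable via Remark \ref{rem:changeofvar}) to ensure that this decay dominates the polynomial prefactors $r^2$ and $r^{2N/K+2}$. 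The identity then reduces to $-\tfrac{1}{2}\|\bfalpha\|^2 = -\tfrac{K+N}{2K}\mu$, giving $\mu = \tfrac{K}{K+N}\|\bfalpha\|^2 = \tfrac{K}{K+N}\|B\mathbf{b}\|^2$ by \eqref{eq:btocviaB}. The main obstacle is the vanishing at infinity: without the quantitative exponential bound from Lemma \ref{lem:u2}, the boundary conditions \eqref{eq:bc-2} alone would not suffice to kill the terms carrying polynomial weights of $r$.
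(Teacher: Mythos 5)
Your proposal is correct and is in substance the same computation as the paper's proof: the Pohozaev-type identity you obtain by multiplying \eqref{eq:ODE} by $r\,\partial_r u_i$ and telescoping is precisely the paper's chain of two integrations by parts in \eqref{eq:mu1proof}, carried out at the level of the ODE rather than inside the integral, and both arguments close with the same boundary data --- $r\,\partial_r u_i \to 2\alpha_i$ at $r=0$ and the weighted limits \eqref{eq:bc}, \eqref{eq:bc-2} at infinity, the latter justified (exactly as you note) by the exponential decay of Lemma \ref{lem:u2}. Your constants and the final identification $\bfalpha = -B\mathbf{b}$ via \eqref{eq:btocviaB} all check out, so the difference from the paper is purely one of bookkeeping.
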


\begin{proof} We compute directly:
\begin{eqnarray}\label{eq:mu1proof}
 \mu &=& \frac{\I}{\pi} \int \mathrm{Tr}
 \left(  \varphi \wedge \varphi^{\dagger_h}
 - \Id |z|^{2N/K} \de z \de \zbar \right)\\ \nonumber
 &\overset{\eqref{eq:hexpression}}{=}& \frac{\I}{\pi} \int_{0}^{2 \pi} \int_{0}^{\infty}
|z|^{2N/K} \sum _{i=1}^K  \left(  \e^{u_{i}-u_{i+1}}-1\right) (-2 \I |z| \de |z| \wedge \de \vartheta) \\\nonumber
&=&4 \int_{0}^\infty
\sum_{i=1}^K  \left(  \e^{u_{i}-u_{i+1}}-1\right)
\de \left( \frac{K}{2(K+N)}|z|^{\frac{2(K+N)}{K}} \right) \\\nonumber
&=& 4 \left[ \sum_{i=1}^K  \left(  \e^{u_{i}-u_{i+1}}-1\right)
\frac{K}{2(K+N)}|z|^{\frac{2(K+N)}{K}} \right|_{0}^\infty \\ \nonumber
& &
- 4 \int_{0}^\infty \sum _{i=1}^K  \de \left(  \e^{u_{i}-u_{i+1}}-1\right)
\frac{K}{2(K+N)}|z|^{\frac{2(K+N)}{K}} \\ \nonumber
&\overset{\eqref{eq:bc}}{=}&
- 4 \int_{0}^\infty \frac{K}{2(K+N)}  \sum_{i=1}^K  |z|^2   \frac{\de u_i}{\de|z|} |z|^{\frac{2N}{K}} \left(  \e^{u_{i}-u_{i+1}}-\e^{u_{i-1}-u_{i}}\right)
\de|z| \\\nonumber
&\overset{\eqref{eq:ODE}}{=}&- 4 \int_{0}^\infty \frac{K}{2(K+N)}  \sum _{i=1}^K  |z|^2  \frac{\de u_i}{\de|z|} \frac{1}{4 |z|} \de \left( |z| \frac{\de u_i}{\de |z|}\right)
\\\nonumber
&=& - \frac{K}{4(K+N)} \left[\sum _{i=1}^K \left( |z| \frac{\de u_i}{\de |z|}\right) ^2 \right|_0^\infty \\\nonumber
&\overset{\eqref{eq:bc}
, \eqref{eq:btocviaB}}{=}& \frac{K}{K+N} \left\|\boldsymbol{\alpha} \right\|^2.
\end{eqnarray}
Lastly, $\boldsymbol{\alpha} = -B \mathbf{b}$.
\end{proof}
Using \eqref{eq:btocviaB} we see that this can also be written as
\begin{equation}\label{eq:mu2}
 \mu = \frac{K}{K+N}\sum_{i=1}^K \left(\pdeg \cL_i \right)^2,
 \end{equation}
a formula reminiscent of \eqref{eq:mu-deg-compact}.

\section{Fixed points and minimal models}\label{sec:Walgebra}

In this section we describe a somewhat mysterious connection between
the fixed points of the $\C^\times$-action on $\cM_{K,N}$ and certain
representations of the vertex algebra $\cW_{K}$.

\subsection{\texorpdfstring{$\cW_{K}$}{W(K)} and its minimal models}

For any $K \ge 2$, there is a ``$W$-algebra'' $\cW_K$.
$\cW_K$ is a vertex algebra containing the Virasoro vertex algebra.
$\cW_2$ is the Virasoro vertex algebra;
for the definition of $\cW_3$ see \cite{Fateev1987a},
and for $\cW_K$ see \cite{Fateev1988}.
A conceptually clean definition of $\cW_K$ uses quantum Drinfeld-Sokolov
reduction of the affine vertex algebra
$\widehat{sl(K)}$ by a principal
$sl(2) \subset sl(K)$.

For any pair $(p,q)$ with $p,q \in \Z_+$
relatively prime, there is a distinguished collection $\Lambda_{K;p,q}$
of representations of
$\cW_K$, called the ``$(p,q)$ minimal model of $\cW_K$.''
Our interest in this paper is in the $(K,K+N)$ minimal model of $\cW_K$.
Thus for convenience we write $\Lambda_{K,N} = \Lambda_{K;K,K+N}$.

\subsection{Irreducible representations of \texorpdfstring{$\mathcal{W}_K$}{W(K)}}\label{sec:Wclass}

The set $\Lambda_{K,N}$ consists of highest-weight representations of
$\cW_K$, parameterized by dominant weights $\Lambda$ of $sl(K)$.
The set of weights which occur is given as follows:

\begin{prop} \label{prop:highestweight}
There is a bijection between $\Lambda_{K,N}$ and the set of
cyclic $K$-partitions of $N$. Given an ordered $K$-partition
of $N$, $\mathbf{b}$,
the associated highest weight is
\begin{equation}
\Lambda_\bfb = P_{\mathbf{1}^\perp} \psi(\mathbf{b})
\end{equation}
where $P_{\mathbf{1}^\perp}$ denotes
orthogonal projection onto $\mathbf{1}^\perp \subset \R^K$,
with $\mathbf{1} = (1, \dots, 1)$,
and
\begin{equation}
	\psi(\bfb) = (n_1, \dots, n_K), \qquad n_i = N - \sum_{j=1}^i b_j. \label{eq:def-psi}
\end{equation}
\end{prop}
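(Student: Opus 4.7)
The plan is to identify both sides of the claimed bijection with a common parameterization, namely level-$N$ dominant integral weights of $\widehat{sl(K)}$ modulo the simple-current action of the center $Z(SL(K)) = \Z/K$.

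First, I would invoke the standard Kac--Wakimoto/Frenkel--Kac--Wakimoto classification of irreducibles in the $(p,q)$ minimal model of $\cW_K$: they are parameterized by pairs $(\lambda_-,\lambda_+)$ of dominant integral weights of $\widehat{sl(K)}$ at levels $p-K$ and $q-K$, modulo the diagonal $\Z/K$ simple-current action. In our case $(p,q)=(K,K+N)$, the level-$(p-K)=0$ piece forces $\lambda_-=0$, and $\Lambda_{K,N}$ is canonically identified with $P_+^N/(\Z/K)$, where $P_+^N$ denotes the set of level-$N$ dominant integral weights.

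Next, I would translate $P_+^N/(\Z/K)$ to cyclic $K$-partitions of $N$ via extended Dynkin labels. Writing $\lambda = \sum_{i=1}^{K-1} m_i \omega_i$ and setting $m_0 := N - \sum_{i=1}^{K-1} m_i$, the map $\lambda \mapsto (m_0,\dots,m_{K-1})$ is a bijection onto $K$-tuples in $\Z_{\ge 0}^K$ summing to $N$; under this bijection the $\Z/K$ simple-current action becomes cyclic rotation of labels, since the simple current corresponds to the rotation symmetry of the extended Dynkin diagram of $\widehat{sl(K)}$. Setting $(m_0,\dots,m_{K-1})=(b_1,\dots,b_K)$ therefore yields a bijection between $P_+^N/(\Z/K)$ and cyclic $K$-partitions of $N$. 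The coprimality of $K$ and $N$ makes the $\Z/K$-action free on both sides, so the quotients have matching cardinality $\frac{1}{K}\binom{N+K-1}{K-1}$.

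Finally, I would verify the explicit formula. With $m_i = b_{i+1}$ for $1 \le i \le K-1$, realize the fundamental weights as $\omega_i = e_1+\cdots+e_i - \frac{i}{K}\mathbf{1}$ in $\R^K$. The coefficient of $e_j$ in $\sum_{i=1}^{K-1} m_i\omega_i$ (before projection) is $\sum_{i=j}^{K-1} m_i = \sum_{l=j+1}^{K} b_l = N - \sum_{l=1}^j b_l = n_j$, with the convention $n_K=0$. Hence $\sum_{i=1}^{K-1}m_i\omega_i$ differs from $\psi(\bfb)$ by a multiple of $\mathbf{1}$, and projecting onto $\mathbf{1}^\perp$ gives $\Lambda_\bfb = P_{\mathbf{1}^\perp}\psi(\bfb)$.

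The main obstacle is bookkeeping: different references use Kac labels versus weight-shifted conventions, and the orientation of the simple-current rotation on the Dynkin labels depends on the choice of generator of $\Z/K$. The delicate step is to fix conventions so that the simple current on $P_+^N$ matches cyclic permutation of $\bfb$ with the correct orientation, making the identifications on both sides compatible.
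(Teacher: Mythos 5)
Your proposal is correct, but it is worth noting that the paper does not actually prove this proposition at all: its ``proof'' is a citation to equation (6.73) of the Bouwknegt--Schoutens review and (4.71) of C\'ordova--Shao, where the parameterization of the $(K,K+N)$ minimal model spectrum is simply recorded. You have reconstructed the content behind those citations from first principles: the Frenkel--Kac--Wakimoto classification of $\cW_K$ $(p,q)$ minimal model irreducibles by pairs of dominant integral $\widehat{sl(K)}$-weights at levels $p-K$ and $q-K$ modulo the diagonal $\Z/K$ simple-current action; the observation that $p=K$ freezes the level-$0$ slot to the unique weight $\lambda_-=0$ (which is fixed by the simple current, so the diagonal action reduces to the action on $\lambda_+$ alone); the translation of $P_+^N/(\Z/K)$ into cyclic $K$-partitions via extended Dynkin labels; and the direct check that $m_i=b_{i+1}$ reproduces $\Lambda_\bfb = P_{\mathbf{1}^\perp}\psi(\bfb)$, which I verified is arithmetically right, including the coprimality-implies-freeness argument giving the matching count $\frac{1}{K}\binom{N+K-1}{K-1}$. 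Your flagged worry about the orientation of the simple-current rotation is actually moot for the statement as written: rotation by $+1$ and by $-1$ generate the same cyclic group, so the equivalence classes (and hence the bijection with cyclic partitions) are insensitive to this choice; the orientation would only matter if one needed to match a specific generator against a specific cyclic shift, as the paper effectively does later in Theorem \ref{thm:mainS1} via the matrix $M$ --- but there the paper works directly with the explicit formula \eqref{eq:def-psi}, which you have derived. In short, your argument is a legitimate, self-contained proof (modulo the FKW classification theorem) of a statement the paper outsources entirely to the literature; what the paper's route buys is brevity, and what yours buys is an actual derivation plus an independent consistency check on the counting.
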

\begin{proof}
 See (6.73) of \cite{W-algebras}, or (4.71) of \cite{cordovashao}.
\end{proof}

Because $\cW_K$ contains the Virasoro algebra, each representation
in $\Lambda_{K,N}$ is in particular a representation of the Virasoro
algebra; thus it has a central charge $c \in \R$ and a Virasoro
highest weight $h \in \R$. The \ti{effective Virasoro central charge}
is defined by
\begin{equation} \label{eq:ceff}
	c_\eff = c - 24h.
\end{equation}

\begin{prop} \label{prop:ceff-formula}
The effective Virasoro central charge of
the representation of $\cW_{K}$ with highest weight $\Lambda$
is
\begin{equation}
c_{\mathrm{eff}}(\Lambda) =
K-1- \frac{12K}{K+N} \left\| \Lambda - \frac{N}{K} \rho\right\|^2
\end{equation}
where
\begin{equation}\label{eq:rho}
 \rho= \frac{1}{2}\left( K-1, K-3, \cdots, 3-K, 1-K\right)
\end{equation}
 is $\frac{1}{2}$ the sum of the positive weights of $SL(K)$.
\end{prop}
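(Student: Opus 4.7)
The plan is to obtain the formula as a direct computation from the standard expressions for the central charge and Virasoro highest weight of the minimal-model representations of $\cW_K$, which are consequences of quantum Drinfeld--Sokolov reduction (or equivalently of the Feigin--Frenkel free-field realization). Essentially no new ideas are needed: the substance of the statement is absorbed into correctly quoting two formulas from the $W$-algebra literature, after which the result drops out by algebra.

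The first step is to recall those formulas. For the $(p,q)$ minimal model of $\cW_K$, labeled (in the convention of Proposition~\ref{prop:highestweight}) by a dominant $sl(K)$-weight $\Lambda$ with the second Kac label trivial, one has
$$c = (K-1) - 12 \|\rho\|^2 \,\frac{(p-q)^2}{pq}, \qquad h = \frac{\|p(\Lambda+\rho) - q \rho\|^2 - (p-q)^2 \|\rho\|^2}{2pq}.$$
The second step is to specialize to $(p,q) = (K, K+N)$ so that $q - p = N$ and $pq = K(K+N)$, and to use the $sl(K)$ identity $12\|\rho\|^2 = K(K^2-1)$. The conformal weight then becomes
$$h = \frac{K^{2} \left\|\Lambda - \tfrac{N}{K}\rho\right\|^{2} - N^{2} \|\rho\|^{2}}{2K(K+N)},$$
while the central charge simplifies to $c = (K-1) - (K^2-1)N^2/(K+N)$.

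The final step is purely algebraic: assembling
$$c_\eff = c - 24h = (K-1) - \frac{(K^2-1)N^2}{K+N} - \frac{12K}{K+N}\left\|\Lambda - \tfrac{N}{K}\rho\right\|^2 + \frac{12 N^2 \|\rho\|^2}{K(K+N)},$$
and invoking $12\|\rho\|^2 = K(K^2-1)$ once more, the two $\Lambda$-independent nonconstant terms cancel, leaving the advertised identity. The only genuine obstacle is a convention check: one must verify that the $\Lambda_\bfb$ produced by the recipe of Proposition~\ref{prop:highestweight} indeed corresponds, under the relevant identification of weight space, to the $\Lambda$ entering the conformal-weight formula, and that the paper's $\rho \in \R^K$ matches the Weyl vector of $sl(K)$ used there. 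A helpful sanity check is the $(K,N) = (2,3)$ case of Section~\ref{sec:M23}, where the formula correctly reproduces $c_\eff = -22/5$ and $c_\eff = 2/5$ for the two representations of the $(2,5)$ Virasoro minimal model.
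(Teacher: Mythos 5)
Your proposal is correct and is in substance the same approach as the paper, whose entire proof of this proposition is the citation ``See \cite{W-algebras}'': you simply carry out the standard computation behind that citation, specializing the $(p,q)$ minimal-model formulas for $c$ and $h$ to $(p,q)=(K,K+N)$ and using the strange formula $12\|\rho\|^2=K(K^2-1)$, after which the $\Lambda$-independent terms cancel exactly as you say. The one genuinely delicate point --- that for $p=K$ the Kac label paired with $p$ sits at level $p-K=0$ and is forced to be trivial, so the surviving label $\Lambda$ is the level-$N$ weight entering as $\bigl\|K\Lambda-N\rho\bigr\|^2$, consistent with the labeling by cyclic $K$-partitions of $N$ in Proposition \ref{prop:highestweight} --- is flagged and handled correctly, as your $(K,N)=(2,3)$ check reproducing $c_\eff=-\frac{22}{5},\frac{2}{5}$ confirms.
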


\begin{proof} See \cite{W-algebras}.
\end{proof}

\subsection{Fixed points and minimal models} \label{sec:maintheorem}

Given $[\mathbf{b}]$ a cyclic $K$-partition of $N$, we have associated two
different sorts of object, each with an associated number:
\begin{itemize}
 \item a point $[(\cE_\bfb, \varphi_\bfb)] \in \cM_{K,N}$ fixed by the $\C^\times$-action (Proposition \ref{prop:classification-fixed-points}), with a corresponding
 number $\mu([(\cE_\bfb, \varphi_\bfb)])$ (Proposition \ref{prop:muone}),
 \item $\Lambda_\bfb$, a highest weight in the $(K, K+N)$ minimal model of $\mathcal{W}_K$ (Proposition
 \ref{prop:highestweight}),
 with a corresponding number $c_{\eff}(\Lambda_\bfb)$ (Proposition \ref{prop:ceff-formula}).
\end{itemize}
Then,
\begin{thm} \label{thm:mainS1}
Let $\mathbf{b}$ be an ordered $K$-partition of $N$
and $\mu = \mu([(\cE_\bfb, \varphi_\bfb)])$,
$c_\eff= c_\eff(\Lambda_\bfb)$.
Then
\begin{equation}\label{eq:dictionary}
 \mu = \frac{1}{12} \left(K - 1 - c_\eff \right).
\end{equation}
\end{thm}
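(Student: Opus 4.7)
The plan is to reduce the theorem to a purely linear-algebraic identity comparing two vectors in $\mathbf{1}^\perp \subset \R^K$, and then verify that identity using the explicit recursion $b_i - \tfrac{N}{K} = \alpha_i - \alpha_{i+1}$ from \eqref{eq:btoc}.

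First I would substitute the two closed-form expressions already in hand. Proposition \ref{prop:muone} gives $\mu = \frac{K}{K+N}\|\boldsymbol{\alpha}\|^2$ (using $\boldsymbol{\alpha} = -B\bfb$), and Proposition \ref{prop:ceff-formula} gives
\begin{equation*}
c_\eff(\Lambda_\bfb) = K - 1 - \frac{12K}{K+N}\left\| \Lambda_\bfb - \tfrac{N}{K}\rho \right\|^2.
\end{equation*}
Plugging both into \eqref{eq:dictionary} and cancelling the common factor $\frac{K}{K+N}$, the theorem is equivalent to the single identity
\begin{equation*}
\| \boldsymbol{\alpha} \|^2 \;=\; \left\| \Lambda_\bfb - \tfrac{N}{K}\rho \right\|^2.
\end{equation*}

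Next I would compute $\Lambda_\bfb - \tfrac{N}{K}\rho$ componentwise. Using \eqref{eq:def-psi}, $\psi(\bfb)_i = N - \sum_{j=1}^i b_j$, telescoping this with the recursion $b_j = \tfrac{N}{K} + \alpha_j - \alpha_{j+1}$ gives
\begin{equation*}
\psi(\bfb)_i \;=\; \tfrac{(K-i)N}{K} + \alpha_{i+1} - \alpha_1.
\end{equation*}
Since $\rho_i = \tfrac{K+1-2i}{2}$, a direct computation yields
\begin{equation*}
\psi(\bfb)_i - \tfrac{N}{K}\rho_i \;=\; \alpha_{i+1} + C, \qquad C := \tfrac{N(K-1)}{2K} - \alpha_1,
\end{equation*}
where the constant $C$ is independent of $i$. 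Applying $P_{\mathbf{1}^\perp}$ kills the constant part; and because $\sum_i \alpha_i = 0$, the vector $(\alpha_{2}, \alpha_3,\dots,\alpha_K,\alpha_1)$ already lies in $\mathbf{1}^\perp$. Therefore
\begin{equation*}
\Lambda_\bfb - \tfrac{N}{K}\rho \;=\; (\alpha_2, \alpha_3, \dots, \alpha_K, \alpha_1),
\end{equation*}
which is a cyclic permutation of $\boldsymbol{\alpha}$. Taking Euclidean norms gives $\|\Lambda_\bfb - \tfrac{N}{K}\rho\|^2 = \|\boldsymbol{\alpha}\|^2$, completing the reduction.

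The only nontrivial step is the telescoping computation of $\psi(\bfb)_i$ in terms of the $\alpha$'s, together with the small arithmetic that identifies $\tfrac{(K-i)N}{K} - \tfrac{N(K+1-2i)}{2K}$ as the $i$-independent quantity $\tfrac{N(K-1)}{2K}$. Everything else is bookkeeping, and there is no genuine obstacle; the main thing to guard against is a sign or cyclic-index convention error between \eqref{eq:btoc} and \eqref{eq:def-psi}. Once the identity $\|\boldsymbol{\alpha}\|^2 = \|\Lambda_\bfb - \tfrac{N}{K}\rho\|^2$ is established, \eqref{eq:dictionary} is immediate.
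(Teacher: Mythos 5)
Your proposal is correct and takes essentially the same route as the paper: both reduce \eqref{eq:dictionary} to the vector identity $\Lambda_\bfb - \tfrac{N}{K}\rho = (\alpha_2,\dots,\alpha_K,\alpha_1)$ (the paper's \eqref{eq:matchvectors}, since $-MB\mathbf{b} = M\boldsymbol{\alpha}$ is exactly the cyclic shift of $\boldsymbol{\alpha}$) and conclude because cyclic permutation preserves the Euclidean norm. The only difference is cosmetic: the paper verifies that identity by matrix algebra via the characterization $B(M-\Id)=(M-\Id)B=P_{\mathbf{1}^\perp}$, $B\mathbf{1}=\mathbf{0}$, whereas you telescope the recursion \eqref{eq:btoc} componentwise (implicitly using $\sum_i \rho_i = 0$ so that $P_{\mathbf{1}^\perp}$ fixes $\rho$), which is equally valid.
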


\begin{proof}
These two numbers are
\begin{align}\label{eq:twonum}
\mu &= \frac{K}{K+N} \left\|B \mathbf{b} \right\|^2, \\
c_\eff&=  K-1- \frac{12K}{K+N} \left\| P^\perp_{\mathbf{1}} \psi(\bfb)   -  \frac{N}{K} \rho \right\|^2, \label{eq:twonum-2}
\end{align}
where $P^\perp_{\mathbf{1}}$ is the orthogonal projection onto the subspace orthogonal to $\mathbf{1}$,
$\rho$ is as in \eqref{eq:rho}, $B$ is as in \eqref{eq:B}.
We rearrange \eqref{eq:twonum-2} to
\begin{equation}
 \frac{1}{12}(K-1-c_{\eff})= \frac{K}{K+N} \left\| P^\perp_{\mathbf{1}} \psi(\bfb) -  \frac{N}{K} \rho \right\|^2.
\end{equation}
This reduces \eqref{eq:dictionary} to
\begin{equation}
	\left\|P^\perp_{\mathbf{1}} \psi(\bfb) -  \frac{N}{K} \rho\right\| = \norm{B \bfb} .
\end{equation}
To prove this we will show
\begin{equation} \label{eq:matchvectors}
 P^\perp_{\mathbf{1}} \psi(\bfb)   -  \frac{N}{K} \rho =-M B \mathbf{b},
\end{equation}
where $M$ is the permutation matrix corresponding to the cyclic shift by $1$,
i.e. $M_{ij}=\delta_{i+1,j}$.
The proof follows the characterization of the matrix $B$ in \eqref{eq:B} as the unique matrix such that
\begin{equation} \label{eq:defining}
B(M-\mathrm{Id})=(M-\mathrm{Id})B=P_{\mathbf{1}^\perp}
\end{equation}
and $B\mathbf{1}=\mathbf{0}$.
We compute:
\begin{align}
 -MB \mathbf{b}&=-MB \left((M^{-1}- \mathrm{Id}) \psi(\bfb) + \begin{pmatrix} N &0 & \cdots 0 \end{pmatrix}^T
 \right)\\
 &=(M-\mathrm{Id}) B \psi(\bfb) - B M \begin{pmatrix} N &0 & \cdots 0 \end{pmatrix}^T\\
  &=P_{\mathbf{1}^\perp} \psi(\bfb) - \frac{N}{K} \rho.
\end{align}
The first line is \eqref{eq:def-psi}.
The second line follows from $BM=MB$. The third line follows from \eqref{eq:defining}
and \eqref{eq:rho}.
\end{proof}
\begin{rem}
 Note that \eqref{eq:matchvectors} gives a direct relation
 between the highest weight $\Lambda_\mathbf{b}$ and the parabolic weights $\boldsymbol{\alpha}$:
 \begin{equation}
  \left(\Lambda_{\mathbf{b}}- \frac{N}{K} \rho\right)_i=-\alpha_{i+1}.
 \end{equation}

\end{rem}

\section{The case of \texorpdfstring{$\cM_{2,3}$}{M(2,3)}} \label{sec:M23}

In this section we discuss in more detail the simplest nontrivial case
of our story, namely the case $K=2$, $N=3$.

\subsection{The stratification}

We begin by studying the stratification of $\cM_{2,3}$ by isomorphism
type of the underlying filtered bundles. This case is simple enough
that we can describe the strata, and representative Higgs bundles,
in a completely explicit way.

\begin{prop}
$\cM_{2,3}$ is decomposed into two strata:
\begin{align}
	\cM_{2,3}^{\rmsmall} &= \cM_{2,3}^{[(3,0)]} = \left\{ [(\cE,\varphi)] \in \cM_{2,3} \, \vert \, \cE \simeq \cO\left(\frac34\right) \oplus \cO\left(-\frac34\right) \right\}, \\
	\cM_{2,3}^{\rmbig} &= \cM_{2,3}^{[(2,1)]} = \left\{ [(\cE,\varphi)] \in \cM_{2,3} \, \vert \, \cE \simeq \cO\left(\frac14\right) \oplus \cO\left(-\frac14\right) \right\}.
\end{align}
\end{prop}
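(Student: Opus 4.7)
The plan is to read this statement as a direct specialization of the general stratification in \eqref{eq:stratification} to the case $K=2$, $N=3$, so everything reduces to two computations: enumerating the cyclic $K$-partitions of $N$, and reading off the parabolic weights $\alpha_i$ from the formula $b_i - N/K = \alpha_i - \alpha_{i+1}$ in Proposition \ref{prop:shape}.

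First I would list the ordered $2$-partitions of $3$: these are $(0,3), (1,2), (2,1), (3,0)$. Modulo the cyclic $\Z/2$-action swapping the two entries, there are exactly two equivalence classes, namely $[(3,0)]$ and $[(2,1)]$. By Proposition \ref{prop:shape} the index set of the stratification \eqref{eq:stratification} is precisely this set of cyclic $K$-partitions, and by Proposition \ref{prop:classification-fixed-points} each stratum is nonempty (it contains the corresponding $\C^\times$-fixed point $[(\cE_\bfb, \varphi_\bfb)]$). So $\cM_{2,3}$ has exactly two strata.

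Next I would compute the underlying filtered bundle type in each stratum using \eqref{eq:btoc-1}, together with the constraint $\alpha_1 + \alpha_2 = 0$ coming from the $SL(2)$-condition (Lemma \ref{lem:parabolic-grothendieck}). For $\bfb = (3,0)$ one gets $\alpha_1 - \alpha_2 = 3 - 3/2 = 3/2$, hence $(\alpha_1, \alpha_2) = (3/4, -3/4)$, giving $\cE \simeq \cO(3/4) \oplus \cO(-3/4)$. For $\bfb = (2,1)$ one gets $\alpha_1 - \alpha_2 = 2 - 3/2 = 1/2$, hence $(\alpha_1, \alpha_2) = (1/4, -1/4)$, giving $\cE \simeq \cO(1/4) \oplus \cO(-1/4)$. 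Labeling the first stratum ``small'' and the second ``big'' (consistent with, e.g., the relative $L^2$-norm values coming out of Proposition \ref{prop:muone}) finishes the identification.

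There is essentially no analytic difficulty here; the only thing to double-check is that the two bundle types are genuinely distinct, which is immediate since $\cO(3/4) \oplus \cO(-3/4) \not\simeq \cO(1/4) \oplus \cO(-1/4)$ as filtered bundles: the parabolic weights mod $\Z$ differ ($\{3/4, 1/4\}$ versus $\{1/4, 3/4\}$ are the same set, but the filtered line bundle summands have different parabolic degrees $\pm 3/4$ versus $\pm 1/4$, so the isomorphism classes differ). Thus the two strata in the proposition are disjoint, exhaust $\cM_{2,3}$, and correspond to the claimed bundle isomorphism types. The only step requiring any real input from earlier in the paper is the bijection between strata and cyclic partitions, which is already established in Proposition \ref{prop:shape} and Proposition \ref{prop:classification-fixed-points}; the rest is bookkeeping.
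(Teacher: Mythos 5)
Your proof is correct and takes essentially the same route as the paper: both arguments are direct specializations of Proposition \ref{prop:shape}, the paper phrasing the constraint as the bound $2\abs{\alpha} \le \frac{3}{2}$ on the weight gap via \eqref{eq:nu-shift}, while you equivalently enumerate the cyclic $2$-partitions of $3$ (the condition $b_i \ge 0$ encodes the same bound) and solve \eqref{eq:btoc-1} together with $\alpha_1 + \alpha_2 = 0$. Your additional remarks on nonemptiness (via the fixed points) and on the distinctness of the two filtered bundle types go slightly beyond what the paper's two-line proof spells out, but they are correct and harmless.
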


\begin{proof} By Proposition \ref{prop:shape} we know that $\cE = \cO(\alpha) \oplus \cO(-\alpha)$
where $\alpha = \frac14 \pmod 1$. On the other hand, by \eqref{eq:nu-shift},
$\bar\varphi$
increases the weight by $\frac32$; it follows that the gap $2 \abs{\alpha}$
cannot exceed $\frac32$. This leaves only the two possibilities listed.
\end{proof}

\begin{prop} \label{prop:stratum-a}
There is a bijection $\cM^{\rmsmall}_{2,3} \simeq \C$. For each $u \in \C$
a representative $(\cE_u, \varphi_u)$ is
\begin{equation}
	\cE_u = \cO\left(\frac34\right) \oplus \cO\left(-\frac34\right), \qquad \varphi_u = \begin{pmatrix} 0 & z^3 + u \\ 1 & 0 \end{pmatrix} \, \de z.
\end{equation}
\end{prop}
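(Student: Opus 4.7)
The plan is to verify the statement by establishing three things: that each $(\cE_u, \varphi_u)$ genuinely lies in $\cC_{2,3}^{\rmsmall}$, surjectivity of the map $u \mapsto [(\cE_u, \varphi_u)]$, and injectivity. Membership in $\cC_{2,3}^{\rmsmall}$ is immediate: the underlying split bundle has the prescribed form, and $\charp \varphi_u = \lambda^2 - (z^3 + u)\de z^2$ lies in $\cB_{2,3}$ by Proposition \ref{prop:hitchinbase} (with $P_2 = -u$ of degree zero, which satisfies $\deg P_2 \le \lfloor N/2 - 1 \rfloor = 0$); the requisite trivialization near $z = \infty$ bringing $\psi^* \varphi_u$ to the local model form is obtained exactly as in the construction of the Hitchin section \eqref{eq:gaugeHitchinsec} with $Q_2 = -u$.

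For surjectivity, let $[(\cE,\varphi)] \in \cM_{2,3}^{\rmsmall}$, fix a filtered decomposition $\cE = \cO(\tfrac{3}{4}) \oplus \cO(-\tfrac{3}{4})$ via Proposition \ref{prop:shape}, and let $e_1, e_2$ be the canonical trivializing sections with $\nu_\infty(e_1) = -\tfrac{3}{4}$, $\nu_\infty(e_2) = \tfrac{3}{4}$. Writing $\bar\varphi(e_j) = \sum_i M_{ij}(z) e_i$ with $M_{ij}$ polynomial, the weight-raising identity of Lemma \ref{lem:nu-phi} together with the non-cancellation of Lemma \ref{lem:filtered-independence} (valid since $\nu_\infty(e_1) - \nu_\infty(e_2) = -\tfrac{3}{2} \notin \Z$) forces
\begin{equation*}
\deg M_{11}, \deg M_{22} \le 1, \qquad \deg M_{12} \le 3, \qquad \deg M_{21} \le 0,
\end{equation*}
and tracelessness of $\varphi$ gives $M_{22} = -M_{11}$. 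The automorphism group $\Aut(\cE)$ of $\cE$ as a filtered $SL(2)$-bundle is the solvable $3$-dimensional upper-triangular group with diagonal entries $(\alpha, \alpha^{-1})$ and polynomial off-diagonal $\beta(z) = \beta_0 + \beta_1 z$, since $\Hom(\cO(\tfrac{3}{4}), \cO(-\tfrac{3}{4})) = 0$ while $\Hom(\cO(-\tfrac{3}{4}), \cO(\tfrac{3}{4}))$ consists of polynomials of degree at most one. Observing that $M_{21} \ne 0$ (otherwise $\det M = -M_{11}^2$ would have degree $\le 2$, contradicting $\det M = -(z^3 + c)$ from Proposition \ref{prop:hitchinbase}), we first normalize $M_{21} \equiv 1$ by a diagonal scaling; then conjugation by the $\beta$-shift sends $M_{11} \mapsto M_{11} + \beta$ and the $(1,2)$-entry to $M_{12} - 2M_{11}\beta - \beta^2$, so the choice $\beta = -M_{11}$ (permissible since $\deg M_{11} \le 1$) simultaneously kills the diagonal and puts $\varphi$ in companion form $\bigl(\begin{smallmatrix} 0 & P(z) \\ 1 & 0 \end{smallmatrix}\bigr)\de z$ with $P(z) = M_{12} + M_{11}^2$. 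The Hitchin base constraint \eqref{eq:B-cond-2} then forces $P(z) = z^3 + u$ for a unique $u \in \C$.

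Injectivity is immediate from the gauge-invariance of the characteristic polynomial, which recovers $u$ from $\charp \varphi_u = \lambda^2 - (z^3+u)\de z^2$. The main technical obstacle is checking that the gauge reduction is essentially unique, i.e.\ that after the two normalization steps the residual stabilizer is just the center $\{\pm I\} \subset SL(2)$, which acts trivially; since $\Aut(\cE)$ is an explicitly described solvable group of dimension three, this is a short direct computation rather than a substantive difficulty.
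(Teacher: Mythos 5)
Your proof is correct, but it takes a genuinely different route from the paper's. The paper avoids all gauge-fixing by a cyclic-vector trick: it takes $e_1$ spanning the subbundle $\cO(\frac34)$ and simply \emph{defines} $e_2 = \bar\varphi(e_1)$; Lemma \ref{lem:nu-phi} gives $\nu_\infty(e_2) = \frac34$, the non-cancellation of Lemma \ref{lem:filtered-independence} gives $\nu_\infty(e_1 \wedge e_2) = 0$, and since $\pdeg \wedge^2 \cE = 0$ the section $e_1 \wedge e_2$ is nowhere vanishing, so $(e_1, e_2)$ is automatically a global trivialization in which $\varphi$ is already in companion form $\bigl(\begin{smallmatrix} 0 & P \\ 1 & 0 \end{smallmatrix}\bigr) \de z$ (the lower-right zero coming from tracelessness); the Hitchin base constraint then gives $P = z^3 + u$, and injectivity is, as you say, just invariance of $\charp \varphi$. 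You instead work in the canonical splitting trivialization, derive the entry degree bounds $\deg M_{11} \le 1$, $\deg M_{12} \le 3$, $\deg M_{21} \le 0$, identify $\Aut(\cE)$ as the explicit three-dimensional solvable group, and reduce to companion form by a diagonal scaling followed by a unipotent shift $\beta = -M_{11}$ — all of which checks out (your formula $P = M_{12} + M_{11}^2$ is consistent with $\det \bar\varphi = -(z^3+u)$, and $\deg\beta \le 1$ is indeed permissible). What your approach buys is an explicit description of the automorphism group and of exactly which gauge freedom is used; what the paper's buys is brevity, since the choice $e_2 = \bar\varphi(e_1)$ makes the normalization automatic. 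Two small points: the non-cancellation you need for the degree bounds (orders distinct mod $1$ implies $\nu_\infty$ of a sum is the maximum) is the computation inside the proof of Proposition \ref{prop:shape}, not Lemma \ref{lem:filtered-independence}, which concerns wedge products — the principle is the same, but the citation is off; and your closing worry about the residual stabilizer is unnecessary, since injectivity of $u \mapsto [(\cE_u,\varphi_u)]$ already follows from the characteristic polynomial alone, as you yourself note — uniqueness of the normal form modulo stabilizers plays no role in the bijection.
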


\begin{proof} Suppose given any $(\cE, \varphi) \in \cM^{\rmsmall}_{2,3}$.
Our aim is to produce a trivialization $(e_1,e_2)$ of $\cE$
in which $\varphi$ takes the
form $\varphi_u$. For this, let $e_1$ denote a nowhere vanishing
section of the (unique) filtered
line subbundle $\cL \subset \cE$ with $\cL \simeq \cO(\frac34)$.
Such a section has
$\nu_\infty(e_1) = -\frac34$. Then let
\begin{equation} \label{eq:phiaction1}
	e_2 = \bar\varphi(e_1).
\end{equation}
Since $\nu_\infty(e_1) = -\frac34$, by \eqref{eq:nu-shift} we have
$\nu_\infty(e_2) = \frac34$. In particular
$\nu_\infty(e_1) - \nu_\infty(e_2) \notin \Z$, which implies
that $e_1 \wedge e_2$ is not identically zero.
Moreover, $\nu_\infty(e_1 \wedge e_2) = 0$, and
since $\pdeg \wedge^2 \cE = 0$, it follows that $e_1 \wedge e_2$ has
no zeros. Thus $(e_1,e_2)$ indeed gives a global trivialization of $\cE$.

Relative to this trivialization, we now consider the matrix representing
$\varphi$: \eqref{eq:phiaction1} implies it is of the form
\begin{equation}
\varphi = \begin{pmatrix} 0 & P(z) \\ 1 & 0 \end{pmatrix} \de z
\end{equation}
(the zero at lower right is determined by $\Tr \varphi = 0$.)
Now by Proposition \ref{prop:hitchinbase} we know that
$\charp \, \varphi = \lambda^2 - (z^3 + u) \de z^2$ for some $u \in \C$.
This gives $P(z) = z^3 + u$.
Thus $(\cE, \varphi) \simeq (\cE_u, \varphi_u)$ as desired.

Conversely it is straightforward to check that each $[(\cE_u, \varphi_u)]$
indeed belongs to $\cM^{\rmsmall}_{2,3}$, and that they are all distinct,
since they have different characteristic polynomials.
\end{proof}

\begin{prop} \label{prop:stratum-b}
There is a bijection $\cM^{\rmbig}_{2,3} \simeq \C^2$. For each $(w,\gamma) \in \C^2$
a representative $(\cE_{w,\gamma}, \varphi_{w,\gamma})$ is
\begin{equation}
	\cE_{w,\gamma} = \cO\left(\frac14\right) \oplus \cO\left(-\frac14\right), \qquad \varphi_{w,\gamma} = \begin{pmatrix} \gamma & \frac{z^3 - w^3}{z - w} \\ z - w & -\gamma \end{pmatrix} \, \de z.
\end{equation}
\end{prop}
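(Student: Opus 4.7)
The plan is to parallel the proof of Proposition \ref{prop:stratum-a}, but with an added twist: for a Higgs bundle in this stratum, the section $\bar\varphi(e_1)$ has $\nu_\infty$ too large to complete a trivialization adapted to the filtration, so a different construction of the second basis vector is needed.

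Given $(\cE, \varphi) \in \cM^{\rmbig}_{2,3}$, I would first observe that $\cE$ contains a canonical filtered line subbundle $\cL \simeq \cO(\tfrac14)$: a section attaining the minimum value $-\tfrac14$ of $\nu_\infty$ (Lemma \ref{lem:nu-values}) spans it, and any two such sections are scalar multiples, since in any decomposition \eqref{eq:filtered-decomp} a section with $\nu_\infty = -\tfrac14$ must be a constant times the trivializing section of $\cO(\tfrac14)$. Let $e_1$ be such a section. By Lemma \ref{lem:nu-phi}, $\nu_\infty(\bar\varphi(e_1)) = \tfrac54$, too large for $\bar\varphi(e_1)$ to serve as $e_2$. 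Instead, I would pick $e_2$ to be any section of $\cE$ with $\nu_\infty(e_2) = \tfrac14$, normalized so that $e_1 \wedge e_2 = \omega$; this is consistent because Lemma \ref{lem:filtered-independence} forces $\nu_\infty(e_1 \wedge e_2) = 0 = \nu_\infty(\omega)$. The choice of $e_2$ is ambiguous by $e_2 \mapsto e_2 + q e_1$ for $q \in \C$, and $e_1$ may be further rescaled by $e_1 \mapsto \lambda e_1$, $e_2 \mapsto \lambda^{-1} e_2$ preserving $\omega$.

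In the trivialization $(e_1, e_2)$, matching $\nu_\infty$ against images under $\bar\varphi$ forces $\bar\varphi_{21}$ to be a polynomial of degree exactly $1$, $\bar\varphi_{11}$ (and $\bar\varphi_{22} = -\bar\varphi_{11}$) to have degree at most $1$, and $\bar\varphi_{12}$ to have degree exactly $2$. The rescaling sends $\bar\varphi_{21} \mapsto \lambda^2 \bar\varphi_{21}$, so I would normalize $\bar\varphi_{21} = z - w$ for a uniquely determined $w \in \C$ (the residual $\lambda = \pm 1$ acts trivially on the matrix). The shift sends $\bar\varphi_{11} \mapsto \bar\varphi_{11} - q \bar\varphi_{21}$, which I would use to cancel the linear term of $\bar\varphi_{11}$, leaving $\bar\varphi_{11} = \gamma \in \C$. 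By Proposition \ref{prop:hitchinbase}, $\charp \varphi = \lambda^2 - (z^3 + u) \de z^2$ for some $u \in \C$, equivalently $\bar\varphi_{12}(z - w) = z^3 + u - \gamma^2$; divisibility by $z - w$ then forces $u = \gamma^2 - w^3$ and $\bar\varphi_{12} = \frac{z^3 - w^3}{z - w}$, exhibiting $(\cE, \varphi) \simeq (\cE_{w, \gamma}, \varphi_{w, \gamma})$.

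For injectivity of $(w, \gamma) \mapsto [(\cE_{w, \gamma}, \varphi_{w, \gamma})]$, I would note that $w$ is canonically determined as the unique zero of $\bar\varphi_{21}$ after normalizing its leading coefficient to $1$, and $\gamma$ is the value of the resulting constant $\bar\varphi_{11}$; the preceding normalization procedure shows these are well-defined invariants. To verify that each $(\cE_{w, \gamma}, \varphi_{w, \gamma})$ does lie in $\cC_{2,3}$, I would exhibit an explicit gauge $g$ bringing $\psi^* \varphi_{w, \gamma}$ into $\varphi_{\mathrm{model}} + (\text{holomorphic in } u)$ near $u = 0$, in the spirit of \eqref{eq:gaugeHitchinsec}. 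The main obstacle is correctly accounting for the gauge freedom at each step so that $(w, \gamma)$ emerge as genuine invariants rather than artifacts of choices; the clean matching of the $\C \times \C^\times$-worth of trivialization freedom against the two normalization conditions is what makes this work.
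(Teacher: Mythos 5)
Your proof is correct, but it reaches the normal form by a genuinely different mechanism than the paper's. The paper constructs the second frame element canonically from the Higgs field itself: since $\nu_\infty(e_1 \wedge \bar\varphi(e_1)) = 1$, that section vanishes at a single point $w \in \C$, at which $e_1$ is an eigenvector of $\bar\varphi$ with some eigenvalue $\gamma$, and one sets $e_2 = \bigl(\bar\varphi(e_1) - \gamma e_1\bigr)/(z-w)$; this is regular, has $\nu_\infty(e_2) = \frac14$, and makes $e_1 \wedge e_2$ constant, so $\varphi$ immediately takes the form $\begin{pmatrix} \gamma & P(z) \\ z-w & -\gamma \end{pmatrix} \de z$ with only $P$ unknown, fixed by the characteristic polynomial. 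In particular $(w,\gamma)$ come with ready-made invariant meanings --- the zero of $e_1 \wedge \bar\varphi(e_1)$ and the eigenvalue of $\bar\varphi$ on $\cL$ there --- which is how the paper disposes of injectivity in one line. You instead take an arbitrary basis adapted to the filtration, derive degree bounds on all four matrix entries from Lemmas \ref{lem:nu-phi} and \ref{lem:filtered-independence} (your counts are right: $\deg \bar\varphi_{21} = 1$ and $\deg \bar\varphi_{12} = 2$ exactly, $\deg \bar\varphi_{11} \le 1$, because the fractional parts of $\nu_\infty$ on the two summands differ by $\frac12$ so the maximum in each image is attained at the unique index with the correct fractional part), and then gauge-fix, matching the residual $\C^\times \ltimes \C$ of frame freedom against the two normalization conditions (monic $\bar\varphi_{21}$, constant $\bar\varphi_{11}$), with only $\lambda = \pm 1$ acting trivially. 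What your route buys: the slice argument makes it structurally transparent why exactly two parameters survive and why they are invariants, and it generalizes mechanically to other strata or higher $K$, where a canonical division trick like the paper's may be less obvious. What the paper's route buys: brevity, no need to bound $\bar\varphi_{11}$ or $\bar\varphi_{12}$ at all, and geometric invariants for free; note that in your normalized frame $\bar\varphi(e_1) = \gamma e_1 + (z-w) e_2$, so $e_1 \wedge \bar\varphi(e_1) = (z-w)\,\omega$, and your $(w,\gamma)$ coincide with the paper's. Like the paper, you defer the verification that the representatives lie in $\cC_{2,3}$ to an explicit gauge computation in the spirit of \eqref{eq:gaugeHitchinsec}, so you are not holding yourself to a lower standard than the source on that point.
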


\begin{proof} Suppose given any $(\cE, \varphi) \in \cM^{\rmbig}_{2,3}$. As above,
let $e_1$ denote a section of $\cL \simeq \cO(\frac14)$ with $\nu_\infty(e_1) = - \frac14$.
Now consider $e_1 \wedge \bar\varphi(e_1)$. In contrast to the case of
Proposition \ref{prop:stratum-a} above, we now have $\nu_\infty(e_1 \wedge \bar\varphi(e_1)) = 1$,
so $e_1 \wedge \bar\varphi(e_1)$ vanishes at one point $w \in \C$.
Thus taking $e_2 = \bar\varphi(e_1)$ will not give a global
trivialization. Instead, we proceed as follows. Since $e_1 \wedge \bar\varphi(e_1)$
vanishes at $w \in \C$, $e_1$ is an eigenvector of $\bar\varphi$ at $w$:
in other words we have $(\bar\varphi - \gamma) e_1 = 0$ at $w$,
for some $\gamma \in \C$. Then we let
\begin{equation} \label{eq:phiaction2}
	e_2 = \frac{\bar\varphi(e_1) - \gamma e_1}{z-w}.
\end{equation}
$e_2$ is actually regular even at $z = w$ since the numerator vanishes there.
Also $\nu_\infty(e_2) = \frac14$ (since the first term has $\nu_\infty = \frac14$
and the second has $\nu_\infty = -\frac54$.)
Finally, $e_1 \wedge e_2 = \frac{e_1 \wedge \bar\varphi(e_1)}{z-w}$ is also
regular everywhere and has $\nu_\infty(e_1 \wedge e_2) = 0$, so it is constant;
thus $(e_1, e_2)$ give a global trivialization of $\cE$.

Relative to this trivialization, \eqref{eq:phiaction2} says
\begin{equation}
	\varphi = \begin{pmatrix} \gamma & P(z) \\ z-w & -\gamma \end{pmatrix} \, \de z.
\end{equation}
Imposing $\charp \varphi = \lambda^2 - (z^3 + u) \de z^2$
for some $u$ forces $P(z) = z^2 + zw + w^2$.
This shows that $(\cE, \varphi) \simeq (\cE_{w,\gamma}, \varphi_{w,\gamma})$
as desired. Moreover, $(w,\gamma)$ are determined
by $(\cE, \varphi)$: $w$
is the location of the zero of $e_1 \wedge \bar\varphi e_1$, and
$\gamma$ is the eigenvalue $\bar\varphi e_1 / e_1$ at $z = w$.
Finally, it is straightforward to check
$(\cE_{w,\gamma}, \varphi_{w,\gamma}) \in \cC_{2,3}$.
\end{proof}

\subsection{The $\C^\times$-action and its fixed points}
The $\C^\times$-action in this example also admits an explicit and simple
description: 
in the coordinates we have found above, it acts linearly on each stratum.

\begin{prop}
The $\C^\times$-action preserves the strata in $\cM_{2,3}$, and:
\begin{itemize}
\item The $\C^\times$-action on $\cM_{2,3}^{\rmsmall}$ takes
$u \mapsto \e^{\frac{6t}{5}} u$.

\item The $\C^\times$-action on $\cM^{\rmbig}_{2,3}$ takes
	$(w,\gamma) \mapsto (\e^{\frac{2t}{5}}w, \e^{\frac{3t}{5}}\gamma).$
\end{itemize}
\end{prop}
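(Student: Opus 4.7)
The plan is to verify the two claims by direct computation in the explicit coordinates of Propositions \ref{prop:stratum-a} and \ref{prop:stratum-b}, and to show that the resulting modifications of the Higgs field can be reabsorbed by a constant diagonal gauge transformation respecting the filtered structure.

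First, I would show that the $\C^\times$-action preserves the stratification. This is essentially automatic: for any filtered line bundle $\cL \simeq \cO(\alpha)$ on $(\CP^1, \{\infty\})$, the pullback $\rho_t^* \cL$ is again isomorphic to $\cO(\alpha)$, since $\rho_t$ is an automorphism of $\CP^1$ fixing $\infty$ and the filtration at $\infty$ is pulled back to itself (up to the action on local coordinates which only rescales). Hence the decomposition type of $\cE$ in Proposition \ref{prop:shape} is preserved, and each stratum $\cM_{2,3}^{\rmsmall}$ and $\cM_{2,3}^{\rmbig}$ is invariant.

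Next, for $\cM_{2,3}^{\rmsmall}$, I would compute directly. Using $\rho_t^* z = \e^{-2t/5} z$ and $\rho_t^* \de z = \e^{-2t/5} \de z$, one finds
\begin{equation}
\e^t \rho_t^* \varphi_u = \begin{pmatrix} 0 & \e^{-3t/5}z^3 + u\,\e^{3t/5} \\ \e^{3t/5} & 0 \end{pmatrix} \de z.
\end{equation}
Applying the constant diagonal gauge transformation $g = \diag(1, \e^{-3t/5})$ brings this to $\varphi_{u'}$ with $u' = \e^{6t/5} u$. Since $g$ is constant and respects the line decomposition $\cO(\tfrac34) \oplus \cO(-\tfrac34)$, it is an isomorphism of filtered $SL(2)$-bundles, so $[(\rho_t^* \cE_u, \e^t \rho_t^* \varphi_u)] = [(\cE_{u'}, \varphi_{u'})]$ as claimed.

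For $\cM_{2,3}^{\rmbig}$ the computation is analogous but slightly longer. Substituting $\rho_t^* z = \e^{-2t/5} z$ into $\varphi_{w,\gamma}$ and multiplying by $\e^t$, I would match the result to $\varphi_{w', \gamma'}$ by the constant diagonal gauge $g = \diag(1, \e^{-t/5})$. Requiring the lower-left entry to become $z - w'$ forces $w' = \e^{2t/5} w$; the diagonal entries then give $\gamma' = \e^{3t/5}\gamma$; and the upper-right entry is automatically consistent, since $z^2 + \e^{2t/5} z w + \e^{4t/5} w^2 = z^2 + z w' + (w')^2$. As before, $g$ is constant and preserves the decomposition $\cO(\tfrac14) \oplus \cO(-\tfrac14)$, so it is a legitimate filtered isomorphism.

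The only real subtlety is checking that the gauge transformations $g$ are admissible, i.e.\ that constant diagonal matrices act by filtered bundle automorphisms on $\cE_{w,\gamma}$ and $\cE_u$; this is immediate because the trivializations $(e_1, e_2)$ constructed in the proofs of Propositions \ref{prop:stratum-a} and \ref{prop:stratum-b} are adapted to the filtered line decomposition of $\cE$, so rescaling each $e_i$ by a constant manifestly preserves the filtered structure and the condition $\nu_\infty(e_1 \wedge e_2) = 0$. No part of the argument requires anything deeper than the explicit form of the representatives already given.
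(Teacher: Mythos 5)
Your proposal is correct and takes essentially the same route as the paper's own (deliberately terse) proof: apply the action \eqref{eq:new-cstar-action} to the representatives of Propositions \ref{prop:stratum-a}--\ref{prop:stratum-b} and restore representative form by a diagonal gauge transformation, and your explicit matrices, exponents, and the consistency check $z^2 + \e^{2t/5}zw + \e^{4t/5}w^2 = z^2 + zw' + (w')^2$ all verify. The only cosmetic point is that for an honest filtered $SL(2)$-bundle isomorphism one should replace $\diag(1, \e^{-3t/5})$ by its unimodular rescaling $\diag(\e^{3t/10}, \e^{-3t/10})$ (and similarly in the big stratum), which conjugates identically since scalars act trivially.
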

\begin{proof} Just apply the $\C^\times$-action
\eqref{eq:new-cstar-action}
to the representatives
given in Propositions \ref{prop:stratum-a}-\ref{prop:stratum-b},
then use a diagonal gauge transformation to put them back
into the representative form.
\end{proof}

In particular we see immediately that there are exactly
two $\C^\times$-fixed points in $\cM_{2,3}$, one in each stratum, corresponding
to $u = 0$ and $(w,\gamma) = (0,0)$. This is in
accord with the general classification of fixed points in \S\ref{sec:fixedpoints},
which when specialized to this case gives
\begin{align}
[\bfb] &= [(0,3)] \quad \rightsquigarrow  \quad \cE_\bfb = \cO\left(-\frac34\right) \oplus \cO\left(\frac34\right), \quad \varphi_\bfb = \begin{pmatrix} 0 & 1 \\ z^3 & 0 \end{pmatrix} \, \de z, \quad \mu = \frac{9}{20}, \\
[\bfb] &= [(1,2)] \quad \rightsquigarrow \quad \cE_\bfb = \cO\left(-\frac14\right) \oplus \cO\left(\frac14\right), \quad \varphi_\bfb = \begin{pmatrix} 0 & z \\ z^2 & 0 \end{pmatrix} \, \de z, \quad \mu = \frac{1}{20}.
\end{align}
For convenience we also included the values of $\mu$ at the two fixed points,
calculated using \eqref{eq:mu1}.
According to our discussion in \S\ref{sec:Walgebra},
these two fixed points are supposed to correspond to the two representations of
the Virasoro algebra comprising the $(2,5)$ Virasoro minimal model.
Let us verify this directly.
The $(2,5)$ Virasoro minimal model has $c = - \frac{22}{5}$
and Virasoro highest weights $h = 0, -\frac{1}{5}$, which
using \eqref{eq:ceff} gives
$c_{\eff} = -\frac{22}{5}, \frac{2}{5}$.
Next \eqref{eq:dictionary}
with $K=2$ says $c_{\eff} = 1 - 12 \mu$.
Substituting $\mu = \frac{9}{20}, \frac{1}{20}$ this indeed
matches.

\subsection{The Hitchin fibration}

From \eqref{eq:B-cond-2}, the Hitchin base $\cB_{2,3}$ is
\begin{equation}
\cB_{2,3} = \{\lambda^2 - (z^3 + u) \de z^2 \, | \, u \in \C\}.
\end{equation}
Let us consider the fiber $\pi^{-1}(u)$ for some $u \in \C$.
$\pi^{-1}(u)$ meets the stratum $\cM^{\rmbig}_{2,3}$ in the
affine cubic curve $J_u = \{\gamma^2 - w^3 = u\}$,
and meets $\cM^{\rmsmall}_{2,3}$ in a single
point. The latter plays the
role of the ``point at infinity'' of $J_u$.

Let us briefly and informally explain this point. (We could hardly do better,
since we have not rigorously constructed a topology or complex structure on
$\cM_{2,3}$.)
The $\varphi_{w,\gamma}$ do not have a limit as $(w,\gamma) \to \infty$
along $J_u$.
However, consider the ``gauge transformation''
\begin{equation}
g = \I \begin{pmatrix} \frac{\gamma}{w} & w+z \\ 0 & - \frac{w}{\gamma} \end{pmatrix}
\end{equation}
A direct computation gives
\begin{equation}
\lim_{(w,\gamma) \to \infty} g \varphi_{w,\gamma} g^{-1} = \varphi_u.
\end{equation}
Thus, using $g$ for patching, we can extend the family of Higgs bundles
$(\cE_{w,\gamma}, \varphi_{w,\gamma})$ over the point $(w,\gamma) = \infty$.
What remains is to see how the filtration at $z = \infty$ extends.
For this we consider the two sections
\begin{equation}
	e_1 = \begin{pmatrix} 0 \\ -\frac{\I}{w+z} \end{pmatrix}, \quad e_2 = \begin{pmatrix} -\I (w+z) \\ \frac{\I \gamma}{w} \end{pmatrix}.
\end{equation}
At finite $(w,\gamma)$ we can compute directly
$\nu_\infty(e_1) = -\frac34$, $\nu_\infty(e_2) = \frac34$.\footnote{Here it is important that
$e_1$ is only a \ti{meromorphic} section: it would be impossible
to have a global holomorphic section of $\cE_{w,\gamma}$ with $\nu_\infty = -\frac34$.}
On the other hand,
\begin{equation}
	g e_1 = \begin{pmatrix} 1 \\ -\frac{w}{(w+z)\gamma} \end{pmatrix}, \quad g e_2 = \begin{pmatrix} 0 \\ 1 \end{pmatrix}.
\end{equation}
Thus $g e_1$ and $g e_2$ extend to
$(w,\gamma) = \infty$ and there they become the standard basis vectors.
With this in mind we can determine a filtration $\nu_\infty$ which extends
over $w = \gamma = \infty$
by the conditions that $\nu_\infty(e_1) = -\frac34$, $\nu_\infty(e_2) = \frac34$.
Now restricting to $(w,\gamma) = \infty$ we get a filtered Higgs bundle
which is isomorphic to $(\cE_u, \varphi_u)$.
Thus this family exhibits $(\cE_u, \varphi_u)$ as the limit of the
$(\cE_{w,\gamma}, \varphi_{w,\gamma})$ as desired.

Altogether, then, each fiber $\pi^{-1}(u)$ is a projective cubic curve.
The central fiber $\pi^{-1}(u=0)$ is a cuspidal cubic, containing the
two $\C^\times$-fixed points; the cusp is the fixed point with $\mu = \frac{1}{20}$.
All other fibers $\pi^{-1}(u)$, $u \in \C^\times$, are smooth complex tori.
The stratum $\cM_{2,3}^\rmsmall$ meets the
central fiber in the fixed point $[(\cE_{u=0}, \varphi_{u=0})]$
with $\mu = \frac{9}{20}$.

Finally, it seems natural to conjecture that $\mu$ extends to a function
on the whole of $\cM_{2,3}$ which gives a 
moment map for the $U(1)$-action, 
and in every fiber $\pi^{-1}(u)$ the maximum value of
$\mu$ is attained at the point $[(\cE_u, \varphi_u)] \in \cM_{2,3}^\rmsmall$.
If this conjecture is correct, then 
schematically $\cM_{2,3}$ looks as in Figure \ref{fig:K2N3modulispace}.
\begin{figure}[h]
\begin{centering}
\includegraphics[width=4.0in]{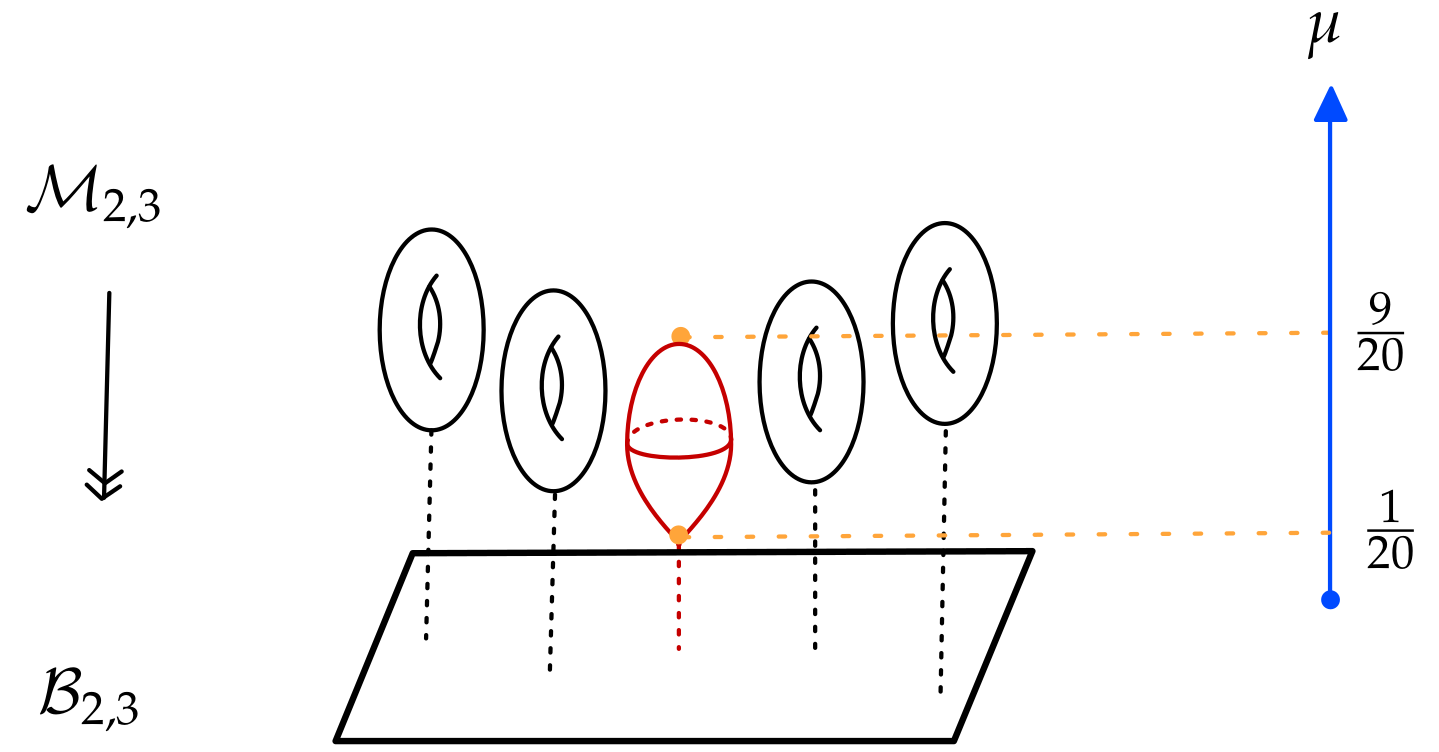}
\caption{\label{fig:K2N3modulispace}
A schematic picture of $\cM_{2,3}$.}
\end{centering}
\end{figure}

\bibliography{U1Walgebra}{}
\bibliographystyle{utphys}

\end{document}